\documentclass[12pt]{article}

\makeatletter
\newcommand{\vast}{\bBigg@{4}}
\newcommand{\Vast}{\bBigg@{5}}
\makeatother

\usepackage{amssymb,amsmath}

\usepackage{url,graphicx,subfig}  
\usepackage{mathrsfs}  
\usepackage{url}
\usepackage{color}

\usepackage{float}

\usepackage{tikz}
\usepackage{caption}

\usepackage{mathtools}
\usepackage{graphicx}
\usepackage{amssymb,amsmath,wasysym}
\usepackage{amsfonts}
\usepackage{comment}
\usepackage{enumerate}
\usepackage{color}

\usepackage[margin=1.5cm]{geometry}

\newtheorem{Theorem}{Theorem}
\newtheorem{Corollary}{Corollary}
\newtheorem{Lemma}{Lemma}

\newtheorem{Notes}{Note}

\newcommand{\bxi}{\mbox{\boldmath$\xi$}}
\newcommand{\bpi}{\mbox{\boldmath$\pi$}}
\newcommand{\bPsi}{\mbox{\boldmath$\Psi$}}

\newcommand{\balpha}{\mbox{\boldmath$\alpha$}}

\newcommand{\bnu}{\mbox{\boldmath$\nu$}}
\newcommand{\btheta}{\mbox{\boldmath$\theta$}}
\newcommand{\bXi}{\mbox{\boldmath$\Xi$}}

\title{Sensitivity analysis of Stochastic Fluid Models: \\Stationary and transient quantities with applications}

\date{\normalsize \today}

\author{
Anna Aksamit\thanks{School of Mathematics and Statistics, The University of Sydney, NSW 2006, Australia, email:
	anna.aksamit@sydney.edu.au
}
\thanks{Anna Aksamit was supported by the Australian Research Council Discovery Project DP220103106.}
\and	
Ma{\l}gorzata M. O'Reilly\thanks{University of Tasmania, Hobart, TAS 7001, Australia, email: malgorzata.oreilly@utas.edu.au}
\thanks{Ma{\l}gorzata M. O'Reilly was supported by the Australian Research Council Discovery Project DP180100352.}
\and
Zbigniew Palmowski\thanks{Faculty of Pure and Applied Mathematics, Wroc{\l}aw University of Science and Technology, ul. Wybrze\.{z}e Wyspia\'{n}skiego 27,
50-370 Wroc{\l}aw, Poland email: zbigniew.palmowski@pwr.edu.pl}
\thanks{Zbigniew Palmowski was partially supported by the National Science Centre (Poland) under the grant 2021/41/B/HS4/00599.}
}

\begin{document}

\maketitle

\begin{abstract}
We establish results for the first sensitivity analysis of the stochastic fluid models (SFMs). We derive expressions for the sensitivity analysis of the key stationary and transient (time-dependent) quantities of this class of models. We also construct numerical examples to demonstrate the application potential of our methodology in queueing systems, such as deteriorating systems and insurance risk processes. This work forms foundation for the sensitivity analysis of other Markovian modulated models, which are generalisations of the SFMs, and have widespread applications.
\end{abstract}

\noindent{\bf Keywords}\quad matrix-analytic methods, Markovian modulated fluid flow, stochastic fluid flow, Markov chain, stationary distribution, transient distribution, sensitivity analysis
\medskip

\noindent{\bf Mathematics Subject Classification}\quad 60K25 -- 60J22 -- 60J25 -- 65H10


\section{Introduction}

A stochastic fluid model (SFM), also referred to as a Markov-Modulated Fluid Flow, is one of the fundamental models in matrix-analytic methods (MAMs). Pioneered by Neuts, MAMs introduced transformative algorithmic approaches to queueing theory and applied probability~\cite{Neuts}. Because SFMs can elegantly capture the behavior of continuous quantities driven by a discrete background environment, they have become an indispensable tool across a variety of fields, ranging from telecommunications and operations research to risk theory. For a comprehensive overview of the results in the analysis of standard SFMs, the reader is referred to Ramaswami~\cite{Rama}, Ahn and Ramaswami~\cite{AR}, da Silva Soares~\cite{DaSilva}, and Bean, O'Reilly and Taylor~\cite{BOT05a,BOT05,BOT08}.

The state space of a standard SFM $\{(X(t),\varphi(t)):t\ge0\}$ consists of the continuous level variable $X(t)\ge0$, a phase variable $\varphi(t)\in\mathcal{S}=\{1,...,m\}$, and fluid rates $c_{i}\in\mathbb{R}$ for all $i\in\mathcal{S}$. The level $X(t)$ is modulated by a continuous-time Markov chain (CTMC) $\{\varphi(t):t\ge0\}$ with generator $T=[T_{ij}]_{i,j\in\mathcal{S}}$, so that
\begin{equation}\label{SFM1}
\frac{dX(t)}{dt}=c_{\varphi(t)}I(X(t)>0)+\max\{0,c_{\varphi(t)}\}I(X(t)=0),
\end{equation}
where $I(\cdot)$ denotes an indicator function. That is, whenever $X(t)$ is positive, the level changes at the rate $c_{\varphi(t)}$. However, if $X(t)$ is zero, then the level changes at the rate $c_{\varphi(t)}$ provided $c_{\varphi(t)}>0$, and remains constant if $c_{\varphi(t)}\le0$. In this sense, the CTMC models the evolution of some underlying environment that in turn drives the evolution of the continuous level variable within the SFM. We also consider the SFM without the zero boundary defined by
\begin{equation}\label{SFM2}
\frac{d \widetilde{X}(t)}{dt}=c_{\varphi(t)}.
\end{equation}

While the fundamental evaluation of SFMs has been extensively studied, real-world applications often demand an understanding of how robust these models are to changes in their underlying parameters. Sensitivity analysis provides a rigorous mathematical framework to quantify the impact of small perturbations or uncertainties in system parameters, such as the transition rates of the environmental Markov chain or the state-dependent fluid rates, on the overall performance metrics in stationary and transient regimes. Historically, the perturbation and sensitivity analysis of finite Markov chains has been a rich area of research, with foundational matrix-theoretic bounds and derivatives established by Schweitzer \cite{schweitzer1968} and Meyer \cite{meyer1980}. This was later expanded into a broader sensitivity-based optimization framework by Cao \cite{cao2007} and Taylor series expansions for Markov processes by Heidergott et al. \cite{heidergott2006}.

In the context of matrix-analytic methods, so far the sensitivity analysis has predominantly focused on models with a discrete level variable, namely on
Quasi-Birth-and-Death processes; for more information about this family of the processes see Neuts~\cite{Neuts}, Ramaswami and Latouche~\cite{GLVR_1999}, Ramaswami~\cite{ram1997}, Joyner and Fralix~\cite{joyner2016new}, and Phung-Duc et al.~\cite{phung2010simple}. This sensitivity analysis is developed in Aksamit, O'Reilly and Palmowski~\cite{doi:10.1080/15326349.2024.2325448}; see also references therein.

Despite these advancements in discrete-level models, there remains a critical gap in the literature regarding continuous-state models. To the best of our knowledge, this paper establishes the first sensitivity analysis of stochastic fluid models. In this paper, we employ matrix derivative calculus to derive explicit and computationally tractable expressions for the sensitivity of the key quantities of SFMs with respect to a general parameter vector $\theta$.

Our main contributions are multi-fold. First, we establish the sensitivity derivatives for the foundational building blocks of SFM analysis, specifically focusing on the Laplace transforms of the first return to level $0$, $\bPsi(s;\theta)$ and $\bXi(s;\theta)$ for the SFM $X(t)$ and $\widetilde{X}(t)$, respectively. These matrices act as continuous-state analogues to the ${\bf G}$ matrix of QBDs. Building upon these intermediate results, we derive analytical sensitivity measures for the stationary probability density vectors. Furthermore, we extend our methodology to transient (time-dependent) quantities.

To demonstrate the practical applicability of our methodology, we construct numerical examples that show how this theoretical framework can be deployed to used in real-world systems. Specifically, we apply our results to queueing systems present in the management of hydro power generation systems. We also consider the insurance risk processes, where understanding the sensitivity of ruin probabilities to premium rates or claim intensities is vital. Ultimately, this work forms the essential mathematical foundation for the sensitivity analysis that might be applied for other Markovian modulated models generalizing SFMs.

The rest of the paper is structured as follows. In Section \ref{sec:SenAn}, we summarize the relevant expressions from matrix calculus and develop the results for the quantities that are the key building blocks in the sensitivity analysis of the SFMs. Next, the stationary and transient analysis results are developed in Sections \ref{sec:StaQua} and \ref{sec:TraQua}, respectively. This is followed by numerical examples in Section \ref{sec:Exa}, where we illustrate the application of the theory and demonstrate the application potential of our methodology to queueing systems and risk theory. Finally, in Section \ref{sec:Conclusion}, we provide our concluding remarks.

\section{Sensitivity analysis}\label{sec:SenAn}

Suppose $\btheta$ is a vector of some parameters of the SFM $\{(X(t),\varphi(t)):t\ge0\}$, which has an effect on the values of the generator ${\bf T}=[T_{ij}]_{i,j\in\mathcal{S}}$ or fluid rates ${\bf C}=\mbox{diag}(c_i)_{i\in\mathcal{S}}$. Throughout the analysis that follows, we write the expressions for transient and stationary quantities of interest as functions of $\btheta$ and evaluate the derivatives $\frac{\partial}{\partial\btheta}$ of these functions with respect to $\btheta$. We denote drift \[\mu=\sum_ic_i\nu_i,\] where $\bnu=[\nu_i]$ is the stationary distribution of the Markov chain $\{\varphi(t):t\geq 0\}$.

\subsection{Matrix derivative calculus review}
In our analysis, we apply the matrix derivative calculus,
\begin{align}
\frac{\partial}{\partial\btheta}
\left(
{\bf A}(\btheta)\times {\bf B}( \btheta)
\right)
&=
\frac{\partial {\bf A}(\btheta)}{ \partial \btheta}
\times\left(
{\bf I}\otimes {\bf B}(\btheta)
\right)
+
{\bf A}(\btheta)
\times
\frac{\partial {\bf B}(\btheta)}{\partial \btheta}
,
\label{eq:mproduct}\\
\frac{\partial ({\bf A}(\btheta))^{-1}}{\partial \btheta}
&=
-({\bf A}(\btheta))^{-1}
\times
\frac{\partial {\bf A}(\btheta)}{\partial \btheta}
\times
\left(
{\bf I}
\otimes ({\bf A}(\btheta))^{-1}
\right)
,
\label{eq:minverse}
\end{align}
where ${\bf I}$ denotes an identity matrix of an appropriate size.

We will also be dealing with expressions in the form of a matrix exponential, and so we are interested in evaluating
\begin{eqnarray*}
\frac{\partial
}
{ \partial \btheta}
e^{
{\bf A}\left(\btheta\right)x
}
&=&
\frac{\partial }
{ \partial \btheta}
\sum_{n=0}^{\infty}
({\bf A}(\btheta))^n
\frac{x^n}{n!}
=
\frac{\partial }
{ \partial \btheta}
\sum_{n=1}^{\infty}
({\bf A}(\btheta))^n
\frac{x^n}{n!}.
\end{eqnarray*}
Since for $n\geq 1$, we have
\begin{eqnarray*}
\frac{\partial
\left(
{\bf A}(\btheta)
\right)^n
}
{ \partial \btheta}
&=&
\frac{\partial
\left(
{\bf A}(\btheta)
\right)^{n-1}
}
{ \partial \btheta}
\left(
{\bf I}\otimes
{\bf A}(\btheta)
\right)
+
\left(
{\bf A}(\btheta)
\right)^{n-1}
\frac{\partial
{\bf A}(\btheta)
}
{ \partial \btheta},
\end{eqnarray*}
it follows by mathematical induction that
\begin{eqnarray*}
\frac{\partial
\left(
{\bf A}(\btheta)
\right)^n
}
{ \partial \btheta}
&=&
\sum_{k=0}^{n-1}
\left(
{\bf A}(\btheta)
\right)^k
\frac{\partial
{\bf A}(\btheta)
}
{ \partial \btheta}
\left(
{\bf A}(\btheta)
\right)^{(n-1)-k}
,
\end{eqnarray*}
and so
\begin{eqnarray*}
\frac{\partial
}
{ \partial \btheta}
e^{
{\bf A}\left(\btheta\right)x
}
&=&
\sum_{n=1}^{\infty}
\sum_{k=0}^{n-1}
\left(
{\bf A}(\btheta)
\right)^k
\frac{\partial
{\bf A}(\btheta)
}
{ \partial \btheta}
\left(
{\bf A}(\btheta)
\right)^{(n-1)-k}
\frac{x^n}{n!}.
\end{eqnarray*}

Nevertheless, a convenient approach to evaluating $\frac{\partial
}
{ \partial \btheta}
e^{
{\bf A}\left(\btheta\right)x
}$, provided the inverse $-({\bf A}(\btheta)-v{\bf I})^{-1}$ exists, is to numerically invert the Laplace transform given by
\begin{eqnarray}
\int_{x=0}^{\infty}e^{-vx}
\frac{\partial
}
{ \partial \btheta}
e^{
{\bf A}\left(\btheta\right)x
}
dx
&=&
\frac{\partial
}
{ \partial \btheta}
\int_{x=0}^{\infty}e^{-vx}
e^{
{\bf A}\left(\btheta\right)x
}dx
\nonumber
\\
&=&
-
\frac{\partial
}
{ \partial \btheta}
\left(
{\bf A}(\btheta)-v{\bf I}
\right)^{-1}
\nonumber
\\
&=&
({\bf A}(\btheta)-v{\bf I})^{-1}
\times
\frac{\partial {\bf A}(\btheta)}{\partial \btheta}
\times
\left(
{\bf I}
\otimes ({\bf A}(\btheta)-v{\bf I})^{-1}
\right),
\label{eq:exptrick}
\end{eqnarray}
and so we will adopt this approach in our analysis whenever suitable.

\subsection{Key matrices: $\frac{\partial}{\partial\btheta}\bPsi(s;\btheta)$ and $\frac{\partial}{\partial\btheta}\bXi(s;\btheta)$}

The key quantity in the analysis of the SFMs is the matrix $\bPsi(s;\btheta)=[\Psi(s;\btheta)_{ij}]_{i\in\mathcal{S}_+,j\in\mathcal{S}_-}$ recording the Laplace transform of the distribution of the busy period, such that for $s$ with $Re(s)>0$, $\Psi(s;\btheta)_{ij}=\mathbb{E}(e^{-s\delta(0)} I(\varphi(\delta(0))=j)\ |\ X(0)=0,\varphi(0)=i)$, where $\delta(x)=\inf\{t>0:X(t)=x\}$ is the first time to hit level $x$ after time zero.

We are interested in evaluating $\frac{\partial}{\partial\btheta}\bPsi(s;\btheta)$ first, since  the remaining quantities of interest may be expressed using this key derivative.

Denote $\mathcal{S}_+=\{i\in\mathcal{S}:c_i>0\}$, $\mathcal{S}_-=\{i\in\mathcal{S}:c_i<0\}$, $\mathcal{S}_0=\{i\in\mathcal{S}:c_i=0\}$.
Let ${\bf C}_+=\mbox{diag}(c_i)_{i\in\mathcal{S}_+}$ and ${\bf C}_-=\mbox{diag}(c_i)_{i\in\mathcal{S}_-}$.
We also need the following notation ${\bf T}_{+-}=[T_{ij}]_{i\in\mathcal{S}_+,j\in\mathcal{S}_-}$, ${\bf T}_{-+}=[T_{ij}]_{i\in\mathcal{S}_-,j\in\mathcal{S}_+}$,
${\bf T}_{++}=[T_{ij}]_{i\in\mathcal{S}_+,j\in\mathcal{S}_+}$, ${\bf T}_{--}=[T_{ij}]_{i\in\mathcal{S}_-,j\in\mathcal{S}_-}$. Similarly, we denote ${\bf T}_{0+}$, ${\bf T}_{+0}$, ${\bf T}_{00}$, ${\bf T}_{0-}$, ${\bf T}_{-0}$.
We now consider the fluid generator ${\bf Q}(s;\btheta)$ introduced in Bean, O'Reilly and Taylor~\cite{BOT05},
\begin{eqnarray*}
{\bf Q}(s;\btheta)&=&
\left[
\begin{array}{cc}
{\bf Q}_{++}(s;\btheta)&{\bf Q}_{+-}(s;\btheta)\\
{\bf Q}_{++}(s;\btheta)&{\bf Q}_{+-}(s;\btheta)
\end{array}
\right]
,
\\
{\bf Q}_{++}(s;\btheta)
&=&
({\bf C}_+(s;\btheta))^{-1}
\left(
{\bf T}_{++}(\btheta)-s{\bf I}
-{\bf T}_{+0}
\left(
{\bf T}_{00}(\btheta)-s{\bf I}
\right)^{-1}
{\bf T}_{0+}(\btheta)
\right)
,\\
{\bf Q}_{+-}(s;\btheta)
&=&
({\bf C}_+(s;\btheta))^{-1}
\left(
{\bf T}_{+-}(\btheta)
-{\bf T}_{+0}
\left(
{\bf T}_{00}(\btheta)-s{\bf I}
\right)^{-1}
{\bf T}_{0-}(\btheta)
\right)
,\\
{\bf Q}_{--}(s;\btheta)
&=&
|{\bf C}_-(s;\btheta)|^{-1}
\left(
{\bf T}_{--}(\btheta)-s{\bf I}
-{\bf T}_{-0}
\left(
{\bf T}_{00}(\btheta)-s{\bf I}
\right)^{-1}
{\bf T}_{0-}(\btheta)
\right)
,\\
{\bf Q}_{-+}(s;\btheta)
&=&
|{\bf C}_-(s;\btheta)|^{-1}
\left(
{\bf T}_{-+}(\btheta)
-{\bf T}_{-0}
\left(
{\bf T}_{00}(\btheta)-s{\bf I}
\right)^{-1}
{\bf T}_{0+}(\btheta)
\right),
\end{eqnarray*}
which is the key generator used in the expressions of the quantities in the analysis of the SFMs.

Denote
\begin{eqnarray*}
{\bf D}(s;\btheta)
&=&
{\bf Q}_{--}(s;\btheta)
+
{\bf Q}_{-+}(s;\btheta)\bPsi(s;\btheta)
,\\
{\bf U}(s;\btheta)
&=&
{\bf Q}_{++}(s;\btheta)
+
{\bf Q}_{+-}(s;\btheta)\bXi(s;\btheta)
,\\
{\bf K}(s;\btheta)
&=&
{\bf Q}_{++}(s;\btheta)
+
\bPsi(s;\btheta){\bf Q}_{-+}(s;\btheta)
,\\
{\bf J}(s;\btheta)
&=&
{\bf Q}_{--}(s;\btheta)
+
\bXi(s;\btheta){\bf Q}_{+-}(s;\btheta)
,
\end{eqnarray*}
and let ${\bf D}(\btheta)={\bf D}(0;\btheta)$, ${\bf U}(\btheta)={\bf U}(0;\btheta)$, ${\bf K}(\btheta)={\bf K}(0;\btheta)$, ${\bf J}(\btheta)={\bf J}(0;\btheta)$.

\begin{Theorem}\label{th:PsiDer}
For $s$ with $Re(s)>0$, the matrix $\frac{\partial}{\partial\btheta}\bPsi(s;\btheta)$ is the solution of the equation
\begin{eqnarray*}
{\bf A}{\bf X}+{\bf X}{\bf B}&=&-{\bf D},
\end{eqnarray*}
where
\begin{eqnarray*}
{\bf A}&=&
{\bf K}(s;\btheta)
,\\
{\bf B}&=&
{\bf I}\otimes {\bf D}(s;\btheta)
,\\
{\bf D}&=&
\frac{\partial
{\bf Q}_{+-}(s;\btheta)
}{\partial \btheta}
+
\frac{\partial {\bf Q}_{++}(s;\btheta)}
{ \partial \btheta}
\times\left(
{\bf I}\otimes \bPsi(s;\btheta)
\right)
+
\bPsi(s;\btheta)
\times
\frac{\partial  {\bf Q}_{--}(s;\btheta)}
{\partial \btheta}
\nonumber
\\
&&
+
\bPsi(s;\btheta)
\times
\left\{
\frac{\partial {\bf Q}_{-+}(s;\btheta)}
{ \partial \btheta}
\times\left(
{\bf I}\otimes \bPsi(s;\btheta)
\right)
\right\}
.
\end{eqnarray*}
\end{Theorem}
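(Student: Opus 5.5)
The plan is to differentiate the Riccati equation that characterises $\bPsi(s;\btheta)$, taken from Bean, O'Reilly and Taylor~\cite{BOT05}. For $Re(s)>0$, $\bPsi(s;\btheta)$ is the minimal nonnegative solution of
\begin{eqnarray*}
{\bf Q}_{+-}(s;\btheta)+{\bf Q}_{++}(s;\btheta)\bPsi(s;\btheta)+\bPsi(s;\btheta){\bf Q}_{--}(s;\btheta)+\bPsi(s;\btheta){\bf Q}_{-+}(s;\btheta)\bPsi(s;\btheta)&=&{\bf 0}.
\end{eqnarray*}
I will take this identity as given. It holds for every $\btheta$ in a neighbourhood of the parameter value of interest. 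Assuming $\bPsi(s;\btheta)$ is differentiable in $\btheta$, I apply $\frac{\partial}{\partial\btheta}$ to both sides and expand each product using the product rule \eqref{eq:mproduct}.

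The terms are handled one at a time.
\begin{itemize}
\item The constant term contributes $\frac{\partial{\bf Q}_{+-}}{\partial\btheta}$.
\item The term ${\bf Q}_{++}\bPsi$ gives $\frac{\partial{\bf Q}_{++}}{\partial\btheta}({\bf I}\otimes\bPsi)+{\bf Q}_{++}\frac{\partial\bPsi}{\partial\btheta}$.
\item The term $\bPsi{\bf Q}_{--}$ gives $\frac{\partial\bPsi}{\partial\btheta}({\bf I}\otimes{\bf Q}_{--})+\bPsi\frac{\partial{\bf Q}_{--}}{\partial\btheta}$.
\item For the triple product, I group it as $\bPsi\times({\bf Q}_{-+}\bPsi)$ and apply \eqref{eq:mproduct} twice. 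This gives $\frac{\partial\bPsi}{\partial\btheta}({\bf I}\otimes{\bf Q}_{-+}\bPsi)+\bPsi\frac{\partial{\bf Q}_{-+}}{\partial\btheta}({\bf I}\otimes\bPsi)+\bPsi{\bf Q}_{-+}\frac{\partial\bPsi}{\partial\btheta}$.
\end{itemize}

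Next I collect the terms containing ${\bf X}=\frac{\partial\bPsi}{\partial\btheta}$. Those with ${\bf X}$ on the left combine via $({\bf I}\otimes{\bf Q}_{--})+({\bf I}\otimes{\bf Q}_{-+}\bPsi)={\bf I}\otimes{\bf D}(s;\btheta)$, which is ${\bf B}$. Those with ${\bf X}$ on the right combine as $({\bf Q}_{++}+\bPsi{\bf Q}_{-+}){\bf X}={\bf K}(s;\btheta){\bf X}$, which is ${\bf A}{\bf X}$. The four remaining terms, which do not involve ${\bf X}$, are exactly the matrix ${\bf D}$ of the statement. Moving them to the right-hand side gives ${\bf A}{\bf X}+{\bf X}{\bf B}=-{\bf D}$.

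The main obstacle is justifying differentiability of $\bPsi$ in $\btheta$, and ideally showing that the Sylvester equation has a unique solution, so that "the solution" is well defined. My approach would be the implicit function theorem applied to the Riccati map $F(\bPsi,\btheta)$. Its Fréchet derivative in $\bPsi$ is the Sylvester operator ${\bf X}\mapsto{\bf K}{\bf X}+{\bf X}{\bf D}$, entrywise in each coordinate of $\btheta$. This operator is invertible iff ${\bf K}(s;\btheta)$ and $-{\bf D}(s;\btheta)$ share no eigenvalues.

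For $Re(s)>0$ I expect to show that all eigenvalues of ${\bf D}(s;\btheta)$ and of ${\bf K}(s;\btheta)$ have negative real parts. For ${\bf D}$, it is the generator of the phase process at down-crossings with killing. For ${\bf K}$, it is similarly similar to the up-crossing generator, by the known spectral results in~\cite{BOT05}. The spectra are then separated, giving uniqueness and, via the implicit function theorem, smoothness of $\bPsi$. Since ${\bf Q}$ depends smoothly on $\btheta$ wherever ${\bf T}_{00}-s{\bf I}$ and ${\bf C}_\pm$ are invertible and differentiable, this completes the argument.
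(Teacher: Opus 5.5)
Your proposal is correct and follows the paper's proof exactly: differentiate the Riccati equation \eqref{eq:PsiRic} with the product rule \eqref{eq:mproduct}, then group the terms containing $\frac{\partial}{\partial\btheta}\bPsi(s;\btheta)$ into ${\bf K}(s;\btheta){\bf X}+{\bf X}({\bf I}\otimes{\bf D}(s;\btheta))$, with the remaining terms giving $-{\bf D}$. The paper assumes differentiability and does not address uniqueness, so your implicit-function-theorem argument, using the spectral separation of ${\bf K}(s;\btheta)$ and $-{\bf D}(s;\btheta)$, is extra rigour beyond the paper; to complete it, note that the implicit-function branch coincides with $\bPsi$, because $\bPsi$ is the unique Riccati solution ${\bf X}$ for which ${\bf Q}_{--}+{\bf Q}_{-+}{\bf X}$ is stable.
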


{\bf Proof:} By Bean, O'Reilly and Taylor~\cite{BOT05,BOT08}, for $s$ with $Re(s)>0$, $\bPsi(s;\btheta)$ is the minimum nonnegative solution of
\begin{eqnarray}\label{eq:PsiRic}
{\bf 0}
&=&
{\bf Q}_{+-}(s;\btheta)
+
{\bf Q}_{++}(s;\btheta)\bPsi(s;\btheta)
+
\bPsi(s;\btheta){\bf Q}_{--}(s;\btheta)
+
\bPsi(s;\btheta)
{\bf Q}_{-+}(s;\btheta)
\bPsi(s;\btheta)
.
\end{eqnarray}

Therefore, by the matrix derivative calculus~\eqref{eq:mproduct}-\eqref{eq:minverse},
\begin{eqnarray*}
{\bf 0}
&=&
\frac{\partial
{\bf Q}_{+-}(s;\btheta)
}{\partial \btheta}
\\
&&
+
\frac{\partial {\bf Q}_{++}(s;\btheta)}
{ \partial \btheta}
\times\left(
{\bf I}\otimes \bPsi(s;\btheta)
\right)
+
{\bf Q}_{++}(s;\btheta)
\times
\frac{\partial  \bPsi(s;\btheta)}
{\partial \btheta}
\\
&&
+
\frac{\partial \bPsi(s;\btheta)}
{ \partial \btheta}
\times\left(
{\bf I}\otimes {\bf Q}_{--}(s;\btheta)
\right)
+
\bPsi(s;\btheta)
\times
\frac{\partial  {\bf Q}_{--}(s;\btheta)}
{\partial \btheta}
\\
&&
+
\frac{\partial \bPsi(s;\btheta)}
{\partial \btheta}
\times
\left\{
{\bf I}\otimes
{\bf Q}_{-+}(s;\btheta)
\bPsi(s;\btheta)
\right\}
\\
&&
+
\bPsi(s;\btheta)
\times
\left\{
\frac{\partial {\bf Q}_{-+}(s;\btheta)}
{ \partial \btheta}
\times\left(
{\bf I}\otimes \bPsi(s;\btheta)
\right)
+
{\bf Q}_{-+}(s;\btheta)
\times
\frac{\partial  \bPsi(s;\btheta)}
{\partial \btheta}
\right\}
,
\end{eqnarray*}
which gives
\begin{eqnarray*}
\lefteqn{
\Big(
{\bf Q}_{++}(s;\btheta)
+
\bPsi(s;\btheta)
{\bf Q}_{-+}(s;\btheta)
\Big)
\times
\frac{\partial  \bPsi(s;\btheta)}
{\partial \btheta}
}
\\
&&
+
\frac{\partial  \bPsi(s;\btheta)}
{\partial \btheta}
\times
\Big(
{\bf I}\otimes {\bf Q}_{--}(s;\btheta)
+
{\bf I}\otimes {\bf Q}_{-+}(s;\btheta)
\bPsi(s;\btheta)
\Big)
\\
&=&
-
\frac{\partial
{\bf Q}_{+-}(s;\btheta)
}{\partial \btheta}
-
\frac{\partial {\bf Q}_{++}(s;\btheta)}
{ \partial \btheta}
\times\left(
{\bf I}\otimes \bPsi(s;\btheta)
\right)
-
\bPsi(s;\btheta)
\times
\frac{\partial  {\bf Q}_{--}(s;\btheta)}
{\partial \btheta}
\\
&&
-
\bPsi(s;\btheta)
\times
\left\{
\frac{\partial {\bf Q}_{-+}(s;\btheta)}
{ \partial \btheta}
\times\left(
{\bf I}\otimes \bPsi(s;\btheta)
\right)
\right\},
\end{eqnarray*}
and the result follows. \rule{9pt}{9pt}

For the SFM $\{(\widetilde{X}(t),\varphi(t)):t\geq 0\}$ defined in \eqref{SFM2} (without the zero boundary), we also consider the key quantity $\bXi(s;\btheta)=[\Xi(s;\btheta)_{ij}]_{i\in\mathcal{S}_-,j\in\mathcal{S}_+}$ symmetrical to $\bPsi(s;\btheta)$, such that for $s$ with $Re(s)>0$, $\Xi(s;\btheta)_{ij}=\mathbb{E}(e^{-s\widetilde\delta(0)} I(\varphi(\widetilde\delta(0))=j)\ |\ \widetilde X(0)=0,\varphi(0)=i)$, where $\widetilde\delta(x)=\inf\{t>0:\widetilde X(t)=x\}$ . Then we have the following result, by symmetry.

\begin{Theorem}\label{th:KsiDer}
For $s$ with $Re(s)>0$, the matrix $\frac{\partial}{\partial\btheta}\bXi(s;\btheta)$ is the solution of the equation
\begin{eqnarray*}
{\bf A}{\bf X}+{\bf X}{\bf B}&=&-{\bf D},
\end{eqnarray*}
where
\begin{eqnarray*}
{\bf A}&=&
{\bf J}(s;\btheta)
,\\
{\bf B}&=&
{\bf I}\otimes {\bf U}(s;\btheta)
,\\
{\bf D}&=&
\frac{\partial
{\bf Q}_{-+}(s;\btheta)
}{\partial \btheta}
+
\frac{\partial {\bf Q}_{--}(s;\btheta)}
{ \partial \btheta}
\times\left(
{\bf I}\otimes \bXi(s;\btheta)
\right)
+
\bXi(s;\btheta)
\times
\frac{\partial  {\bf Q}_{++}(s;\btheta)}
{\partial \btheta}
\\
&&
+
\bXi(s;\btheta)
\times
\left\{
\frac{\partial {\bf Q}_{+-}(s;\btheta)}
{ \partial \btheta}
\times\left(
{\bf I}\otimes \bXi(s;\btheta)
\right)
\right\}
.
\end{eqnarray*}
\end{Theorem}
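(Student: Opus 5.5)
The plan is to mirror the proof of Theorem~\ref{th:PsiDer}, with the roles of $+$ and $-$ exchanged. First I will establish the Riccati equation that characterises $\bXi(s;\btheta)$. Consider the sign-reversed model $\{(-\widetilde X(t),\varphi(t))\}$, which has fluid rates $-c_i$. Its positive phases are $\mathcal{S}_-$ and its negative phases are $\mathcal{S}_+$. Its fluid generator blocks are therefore exactly ${\bf Q}_{--},{\bf Q}_{-+},{\bf Q}_{++},{\bf Q}_{+-}$ in the roles of ${\bf Q}_{++},{\bf Q}_{+-},{\bf Q}_{--},{\bf Q}_{-+}$. This holds because each block is defined with $|{\bf C}_\pm|^{-1}$ and the censoring of $\mathcal{S}_0$ is symmetric.

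For $Re(s)>0$, the first return to $0$ of $\widetilde X$ from $0$ started in a phase of $\mathcal{S}_-$ is a downward excursion. Such an excursion never sees the boundary, so the presence or absence of the zero boundary is irrelevant. Applying the result of Bean, O'Reilly and Taylor~\cite{BOT05,BOT08} to the reflected model therefore shows that $\bXi(s;\btheta)$ is the minimal nonnegative solution of
\begin{eqnarray*}
{\bf 0}&=&{\bf Q}_{-+}(s;\btheta)+{\bf Q}_{--}(s;\btheta)\bXi(s;\btheta)+\bXi(s;\btheta){\bf Q}_{++}(s;\btheta)+\bXi(s;\btheta){\bf Q}_{+-}(s;\btheta)\bXi(s;\btheta).
\end{eqnarray*}

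Next I will differentiate this identity term by term using \eqref{eq:mproduct}, exactly as was done with \eqref{eq:PsiRic}. The term ${\bf Q}_{--}\bXi$ contributes $\frac{\partial{\bf Q}_{--}}{\partial\btheta}({\bf I}\otimes\bXi)+{\bf Q}_{--}\frac{\partial\bXi}{\partial\btheta}$. The term $\bXi{\bf Q}_{++}$ contributes $\frac{\partial\bXi}{\partial\btheta}({\bf I}\otimes{\bf Q}_{++})+\bXi\frac{\partial{\bf Q}_{++}}{\partial\btheta}$. For the quadratic term I will treat $\bXi\cdot({\bf Q}_{+-}\bXi)$ as a product and apply \eqref{eq:mproduct} twice. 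This gives
\begin{eqnarray*}
\frac{\partial\bXi}{\partial\btheta}({\bf I}\otimes{\bf Q}_{+-}\bXi)+\bXi\left\{\frac{\partial{\bf Q}_{+-}}{\partial\btheta}({\bf I}\otimes\bXi)+{\bf Q}_{+-}\frac{\partial\bXi}{\partial\btheta}\right\}.
\end{eqnarray*}

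I will then collect the terms. Those with $\frac{\partial\bXi}{\partial\btheta}$ on the right of a coefficient give $({\bf Q}_{--}+\bXi{\bf Q}_{+-})\frac{\partial\bXi}{\partial\btheta}={\bf J}(s;\btheta)\frac{\partial\bXi}{\partial\btheta}$. Those with $\frac{\partial\bXi}{\partial\btheta}$ on the left give $\frac{\partial\bXi}{\partial\btheta}\,({\bf I}\otimes({\bf Q}_{++}+{\bf Q}_{+-}\bXi))=\frac{\partial\bXi}{\partial\btheta}({\bf I}\otimes{\bf U}(s;\btheta))$. Here I use $({\bf I}\otimes{\bf M})+({\bf I}\otimes{\bf N})={\bf I}\otimes({\bf M}+{\bf N})$. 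All remaining terms form exactly the stated ${\bf D}$, moved to the right-hand side with a minus sign, which yields ${\bf A}{\bf X}+{\bf X}{\bf B}=-{\bf D}$.

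The main obstacle is the first step, namely justifying that the Riccati characterisation transfers by symmetry to the unbounded model $\widetilde X$ with the blocks correctly permuted. The differentiation itself is mechanical. Differentiability of $\bXi$ in $\btheta$ is implicitly assumed, exactly as in Theorem~\ref{th:PsiDer}.
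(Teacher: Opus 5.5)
Your proposal is correct and follows the paper's route: the paper obtains this result ``by symmetry'' from Theorem~\ref{th:PsiDer}, i.e.\ by differentiating the $+/-$-swapped Riccati equation for $\bXi(s;\btheta)$ with \eqref{eq:mproduct} and collecting the $\frac{\partial}{\partial\btheta}\bXi(s;\btheta)$ terms into ${\bf J}(s;\btheta)$ and ${\bf I}\otimes{\bf U}(s;\btheta)$, exactly as you do. Your sign-reversed-model argument simply makes explicit what the paper leaves implicit: in that model ${\bf Q}_{--},{\bf Q}_{-+},{\bf Q}_{++},{\bf Q}_{+-}$ play the roles of ${\bf Q}_{++},{\bf Q}_{+-},{\bf Q}_{--},{\bf Q}_{-+}$, and the first-return excursion never sees the zero boundary.
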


Since the quantities $\bPsi(s;\btheta)$ and $\bXi(s;\btheta)$ can be readily obtained using the existing algorithms (e.g. see Bean, O'Reilly and Taylor~\cite{BOT08}), the above expressions can be used to compute their derivatives $\frac{\partial}{\partial\btheta}$ with respect to $\btheta$.

\section{Stationary quantities: $\frac{\partial
}
{ \partial \btheta}
\bpi(x;\btheta)
$ and $\frac{\partial
}
{ \partial \btheta}
{\bf p}(\btheta)
$
}\label{sec:StaQua}

The stationary distribution of the SFM consists of
the probability density vectors $\bpi(x;\btheta)$, $x>0$, such that \[\bpi(x;\btheta)]_i=\frac{\partial}{\partial x}\lim_{t\to\infty}\mathbb{P}(X(t)\leq x,\varphi(t)=i),\] partitioned according to $\mathcal{S}_+\cup\mathcal{S}_-\cup\mathcal{S}_0$,
\begin{equation*}
\bpi(x;\btheta)=\left[
\begin{array}{ccc}
\bpi_{+}(x;\btheta) & \bpi_{-}(x;\btheta) & \bpi_{0}(x;\btheta)
\end{array}
\right]
,
\end{equation*}
and the probability vector ${\bf p}(\btheta)$, such that \[[{\bf p}(\btheta)]_i=\lim_{t\to\infty}\mathbb{P}(X(t)=0,\varphi(t)=i),\] partitioned according to $\mathcal{S}_{-}\cup \mathcal{S}_{0}$,
\begin{equation*}
{\bf p}(\btheta)=\left[
\begin{array}{cc}
{\bf p}_{-}(\btheta) & {\bf p}_{0}(\btheta)
\end{array}
\right].
\end{equation*}

In order to compute $\frac{\partial
}
{ \partial \btheta}
\bpi(x;\btheta)
$ we may numerically invert the Laplace transform $\mathcal{L}_{\partial\bpi}(v;\btheta)$ given by,
\begin{eqnarray*}
\mathcal{L}_{\partial\bpi}(v;\btheta)
&=&
\int_{x=0}^{\infty}e^{-vx}
\frac{\partial}{\partial\btheta}
\bpi(x;\btheta)dx
=
\frac{\partial}{\partial\btheta}
\int_{x=0}^{\infty}e^{-vx}
\bpi(x;\btheta)dx
=
\frac{\partial}{\partial\btheta}
\mathcal{L}_{\bpi}(v;\btheta),
\end{eqnarray*}
using standard techniques by Abate and Whitt~\cite{abate1995numerical}, Den Iseger~\cite{DenIseger_2006}, and Horv{\'a}th et al.~\cite{horvath2020numerical}.

For notational convenience, let
\begin{eqnarray*}
{\bf T}_{\ominus\ominus}(\btheta)&=&
\left[
\begin{array}{cc}
{\bf T}_{- -}(\btheta)& {\bf T}_{- 0}(\btheta)\\
{\bf T}_{0 -}(\btheta)& {\bf T}_{0 0}(\btheta)
\end{array}
\right]
\end{eqnarray*}
and
\begin{eqnarray*}
{\bf T}_{\ominus +}(\btheta)
=
\left[
\begin{array}{c}
{\bf T}_{- +}(\btheta)\\
{\bf T}_{0 +}(\btheta)
\end{array}
\right]
,
\quad
{\bf T}_{\ominus -}(\btheta)
=
\left[
\begin{array}{c}
{\bf T}_{- -}(\btheta)\\
{\bf T}_{0 -}(\btheta)
\end{array}
\right]
,\quad
{\bf T}_{\pm 0}(\btheta)
=
\left[
\begin{array}{c}
{\bf T}_{+ 0}(\btheta)\\
{\bf T}_{- 0}(\btheta)
\end{array}
\right].
\end{eqnarray*}

Below, we state the result that can be used to evaluate $
\mathcal{L}_{\partial\bpi}(v;\btheta)$ and $\frac{\partial
}
{ \partial \btheta}
{\bf p}(\btheta)
$.
\begin{Theorem}
We have,
\begin{eqnarray*}
\mathcal{L}_{\partial\bpi}(v;\btheta)
&=&
\left[
\begin{array}{ccc}
\mathcal{L}_{\partial\bpi}(v;\btheta)_{+} & \mathcal{L}_{\partial\bpi}(v;\btheta)_{-} &
 \mathcal{L}_{\partial\bpi}(v;\btheta)_{0}
\end{array}
\right]
,
\end{eqnarray*}
with
\begin{eqnarray*}
\lefteqn{
\left[
\begin{array}{cc}
\mathcal{L}_{\partial\bpi}(v;\btheta)_{+} & \mathcal{L}_{\partial\bpi}(v;\btheta)_{-}
\end{array}
\right]
}
\nonumber
\\
&=&
\left(
\frac{\partial}{\partial \btheta}
\left[
\begin{array}{cc}
{\bf p}_{-}(\btheta) & {\bf p}_{0}(\btheta)
\end{array}
\right]
\right)
\left(
{\bf I}
\otimes
{\bf T}_{\ominus +}(\btheta)
\left(-
{\bf K}(\btheta)
\right)^{-1}
\left[
\begin{array}{cc}
({\bf C}_{+}(\btheta))^{-1}
&{\bf \Psi}(\btheta)|{\bf C}_{-}(\btheta)|^{-1}
\end{array}
\right]
\right)
\nonumber
\\
&&
+
\left[
\begin{array}{cc}
{\bf p}_{-}(\btheta) & {\bf p}_{0}(\btheta)
\end{array}
\right]
\Bigg\{
\left(
\frac{\partial}{\partial\btheta}
{\bf T}_{\ominus +}(\btheta)
\right)
\left(
{\bf I}\otimes
\left(-
{\bf K}(\btheta)
\right)^{-1}
\left[
\begin{array}{cc}
({\bf C}_{+}(\btheta))^{-1}
&{\bf \Psi}(\btheta)|{\bf C}_{-}(\btheta)|^{-1}
\end{array}
\right]
\right)
\nonumber
\\
&&
\quad
+
{\bf T}_{\ominus +}(\btheta)
\left(
\frac{\partial}{\partial\btheta}
\left(-
{\bf K}(\btheta)
\right)^{-1}
\right)
\left(
{\bf I}\otimes
\left[
\begin{array}{cc}
({\bf C}_{+}(\btheta))^{-1}
&{\bf \Psi}(\btheta)|{\bf C}_{-}(\btheta)|^{-1}
\end{array}
\right]
\right)
\nonumber
\\
&&
\quad
+
{\bf T}_{\ominus +}(\btheta)
\left(-
{\bf K}(\btheta)
\right)^{-1}
\left[
\begin{array}{cc}
\frac{\partial}{\partial\btheta}({\bf C}_{+}(\btheta))^{-1}
&
\left(
\frac{\partial}{\partial\btheta}{\bf \Psi}(\btheta)
\right)
\left(
{\bf I}\otimes
|{\bf C}_{-}(\btheta)|^{-1}
\right)
+
{\bf \Psi}(\btheta)
\left(
\frac{\partial}{\partial\btheta}
|{\bf C}_{-}(\btheta)|^{-1}
\right)
\end{array}
\right]
\Bigg\}
\nonumber\\
\end{eqnarray*}
and
\begin{eqnarray*}
\lefteqn{
\mathcal{L}_{\partial\bpi}(v;\btheta)_{0}
=
\left[
\begin{array}{cc}
\mathcal{L}_{\partial\bpi}(v;\btheta)_{+} & \mathcal{L}_{\partial\bpi}(v;\btheta)_{-}
\end{array}
\right]
\left(
{\bf I}\otimes
{\bf T}_{\pm 0}(\btheta)
(-{\bf T}_{00}(\btheta))^{-1}
\right)
}
\nonumber
\\
&&
+
\left[
\begin{array}{cc}
\mathcal{L}_{\bpi}(v;\btheta)_{+} & \mathcal{L}_{\bpi}(v;\btheta)_{-}
\end{array}
\right]
\Bigg\{
\left(
\frac{\partial}{\partial\btheta}
{\bf T}_{\pm 0}(\btheta)
\right)
\left(
{\bf I}\otimes
(-{\bf T}_{00}(\btheta))^{-1}
\right)
 +
{\bf T}_{\pm 0}(\btheta)
\left(
\frac{\partial}{\partial\btheta}
(-{\bf T}_{00}(\btheta))^{-1}
\right)
\Bigg\},
\nonumber\\
\end{eqnarray*}
where
\begin{eqnarray*}
\frac{\partial}{\partial\btheta}(-{\bf K}(\btheta))^{-1}
&=&
(-{\bf K}(\btheta))^{-1}
\frac{\partial {\bf K}(\btheta)}{\partial \btheta}
\left(
{\bf I}
\otimes (-{\bf K}(\btheta))^{-1}
\right),
\\
\frac{\partial}{\partial\btheta}({\bf C}_{+}(\btheta))^{-1}
&=&
-({\bf C}_{+}(\btheta))^{-1}
\frac{\partial {\bf C}_{+}(\btheta)}{\partial \btheta}
\left(
{\bf I}
\otimes ({\bf C}_{+}(\btheta))^{-1}
\right),
\\
\frac{\partial}{\partial\btheta}
|{\bf C}_{-}(\btheta)|^{-1}
&=&
-(|{\bf C}_{-}(\btheta)|)^{-1}
\frac{\partial |{\bf C}_{-}(\btheta)|}{\partial \btheta}
\left(
{\bf I}
\otimes (|{\bf C}_{-}(\btheta)|)^{-1}
\right)
,\\
\frac{\partial}{\partial\btheta}
(-{\bf T}_{00}(\btheta))^{-1}
&=&
(-{\bf T}_{00}(\btheta))^{-1}
\frac{\partial {\bf T}_{00}(\btheta)}{\partial \btheta}
\left(
{\bf I}
\otimes (-{{\bf T}_{00}(\btheta)})^{-1}
\right)
.
\end{eqnarray*}

Next,
\begin{eqnarray*}
\lefteqn{
\frac{\partial
}
{ \partial \btheta}
\left[
\begin{array}{cc}
{\bf p}_{-}(\btheta) & {\bf p}_{0}(\btheta)
\end{array}
\right]
=
\left(
\frac{\partial
}
{ \partial \btheta}
\alpha(\btheta)
\right)
\left(
{\bf I}\otimes
\left[
\begin{array}{cc}
\bxi(\btheta)&{\bf 0}
\end{array}
\right]
\left(-
{\bf T}_{\ominus\ominus}(\btheta)
\right)^{-1}
\right)
}
\nonumber
\\
&&
+
\alpha(\btheta)
\Bigg\{
\left(
\frac{\partial}{\partial\btheta}
\left[
\begin{array}{cc}
\bxi(\btheta)&{\bf 0}
\end{array}
\right]
\right)
\left(
{\bf I}\otimes
\left(-
{\bf T}_{\ominus\ominus}(\btheta)
\right)^{-1}
\right)
+
\left[
\begin{array}{cc}
\bxi(\btheta)&{\bf 0}
\end{array}
\right]
\left(
\frac{\partial}{\partial\btheta}
\left(-
{\bf T}_{\ominus\ominus}(\btheta)
\right)^{-1}
\right)
\Bigg\}
,
\end{eqnarray*}
where $\alpha(\btheta)$ is a normalising constant, given by
\begin{align}\label{alpha}
\alpha(\btheta)&=
\Bigg\{
\left[
\begin{array}{cc}
\bxi(\btheta)&{\bf 0}\\
\end{array}
\right]
\left(-
{\bf T}_{\ominus\ominus}(\btheta)
\right)^{-1}
\Bigg(
{\bf 1}
+
{\bf T}_{\ominus +}(\btheta)
(-{\bf K}(\btheta))^{-1}
\nonumber\\
&
\quad \quad
\times
\left[
\begin{array}{cc}
({\bf C}_{+}(\btheta))^{-1}
&{\bf \Psi}(\btheta)|{\bf C}_{-}(\btheta)|^{-1}
\end{array}
\right]
\left(
{\bf 1}
+
{\bf T}_{\pm 0}(\btheta)
(-{\bf T}_{00}(\btheta))^{-1}
{\bf 1}
\right)
\Bigg)\Bigg\}^{-1}
,
\end{align}
and $\bxi(\btheta)$ is the unique solution of the set of equations
\begin{eqnarray}
\left[
\begin{array}{cc}
\bxi(\btheta)&{\bf 0}
\end{array}
\right]
\left(-
{\bf T}_{\ominus\ominus}(\btheta)
\right)^{-1}
{\bf T}_{\ominus +}(\btheta)
{\bf \Psi}(\btheta)
&=&\bxi(\btheta),\label{ksi1}\\
\bxi(\btheta){\bf 1}&=&1\label{ksi2}.
\end{eqnarray}

Further, $\frac{\partial
}
{ \partial \btheta}
\alpha(\btheta)$ is given by
\begin{eqnarray*}
\lefteqn{
\frac{\partial
}
{ \partial \btheta}
\alpha(\btheta)
=
\Bigg\{
\left(
\quad
\left(
\frac{\partial}
{ \partial \btheta}
\left[
\begin{array}{cc}
\bxi(\btheta)&{\bf 0}\\
\end{array}
\right]
\right)
\left(
{\bf I}\otimes
\left(-
{\bf T}_{\ominus\ominus}(\btheta)
\right)^{-1}
\right)
+
\left(
\left[
\begin{array}{cc}
\bxi(\btheta)&{\bf 0}\\
\end{array}
\right]
\right)
\left(
\frac{\partial}
{ \partial \btheta}
\left(-
{\bf T}_{\ominus\ominus}(\btheta)
\right)^{-1}
\right)
\quad
\right)
}
\nonumber\\
&&
\quad
\times
{\bf I}\otimes
\Bigg(
{\bf 1}
+
{\bf T}_{\ominus +}(\btheta)
(-{\bf K}(\btheta))^{-1}
\left[
\begin{array}{cc}
({\bf C}_{+}(\btheta))^{-1}
&{\bf \Psi}(\btheta)|{\bf C}_{-}(\btheta)|^{-1}
\end{array}
\right]
\left(
{\bf 1}
+
{\bf T}_{\pm 0}(\btheta)
(-{\bf T}_{00}(\btheta))^{-1}
{\bf 1}
\right)
\Bigg)\Bigg\}^{-1}
\nonumber\\
&&
+
\Bigg\{
\left(
\left[
\begin{array}{cc}
\bxi(\btheta)&{\bf 0}\\
\end{array}
\right]
\left(-
{\bf T}_{\ominus\ominus}(\btheta)
\right)^{-1}
\right)
\left(
\quad
\left(
\frac{\partial}{\partial\btheta}
{\bf T}_{\ominus +}(\btheta)
\right)
{\bf I}\otimes
(-{\bf K}(\btheta))^{-1}
+
{\bf T}_{\ominus +}(\btheta)
\left(
\frac{\partial}{\partial\btheta}
(-{\bf K}(\btheta))^{-1}
\right)
\quad
\right)
\nonumber\\
&&
\quad
\times
{\bf I}\otimes
\Bigg(
\left[
\begin{array}{cc}
({\bf C}_{+}(\btheta))^{-1}
&{\bf \Psi}(\btheta)|{\bf C}_{-}(\btheta)|^{-1}
\end{array}
\right]
\left(
{\bf 1}
+
{\bf T}_{\pm 0}(\btheta)
(-{\bf T}_{00}(\btheta))^{-1}
{\bf 1}
\right)
\Bigg)\Bigg\}^{-1}
\nonumber\\
&&
+
\Bigg\{
\left[
\begin{array}{cc}
\bxi(\btheta)&{\bf 0}\\
\end{array}
\right]
\left(-
{\bf T}_{\ominus\ominus}(\btheta)
\right)^{-1}
{\bf T}_{\ominus +}(\btheta)
(-{\bf K}(\btheta))^{-1}
\nonumber\\
&&
\quad \quad
\times
\Bigg(
\quad
\left[
\begin{array}{cc}
\frac{\partial}{\partial\btheta}({\bf C}_{+}(\btheta))^{-1}
&
\left(
\frac{\partial}{\partial\btheta}{\bf \Psi}(\btheta)
\right)
\left(
{\bf I}\otimes
|{\bf C}_{-}(\btheta)|^{-1}
\right)
+
{\bf \Psi}(\btheta)
\frac{\partial}{\partial\btheta}
|{\bf C}_{-}(\btheta)|^{-1}
\end{array}
\right]
\nonumber\\
&&
\quad \quad \quad \quad \quad
\times
\left(
{\bf I}\otimes
\left(
{\bf 1}
+
{\bf T}_{\pm 0}(\btheta)
(-{\bf T}_{00}(\btheta))^{-1}
{\bf 1}
\right)
\right)
\nonumber\\
&&
\quad \quad \quad \quad
+
\left[
\begin{array}{cc}
({\bf C}_{+}(\btheta))^{-1}
&{\bf \Psi}(\btheta)|{\bf C}_{-}(\btheta)|^{-1}
\end{array}
\right]
\nonumber\\
&&
\quad \quad \quad \quad \quad
\times
\left(
\left(
\frac{\partial}{\partial\btheta}
{\bf T}_{\pm 0}(\btheta)
\right)
\left(
{\bf I}\otimes
(-{\bf T}_{00}(\btheta))^{-1}
{\bf 1}
\right)
+
{\bf T}_{\pm 0}(\btheta)
\left(
\frac{\partial}{\partial\btheta}
(-{\bf T}_{00}(\btheta))^{-1}
\right)
\left(
{\bf I}\otimes{\bf 1}
\right)
\right)
\quad
\Bigg)\Bigg\}^{-1}
,
\nonumber\\
\end{eqnarray*}
with
\begin{eqnarray*}
\frac{\partial}{\partial\btheta}
\left(-
{\bf T}_{\ominus\ominus}(\btheta)
\right)^{-1}
&=&
\left(-
{\bf T}_{\ominus\ominus}(\btheta)
\right)^{-1}
\left(
\frac{\partial}{\partial \btheta}
{\bf T}_{\ominus\ominus}(\btheta)
\right)
\left(
{\bf I}
\otimes
\left(-
{\bf T}_{\ominus\ominus}(\btheta)
\right)^{-1}
\right).
\end{eqnarray*}

Finally, $\frac{\partial}{\partial\btheta}\bxi(\btheta)$ is the solution of the set of equations
\begin{eqnarray*}
\lefteqn{
\frac{\partial}{\partial\btheta}
\bxi(\btheta)
=
\left(
\frac{\partial}{\partial\btheta}\bxi(\btheta)
\right)
\left(
{\bf I}\otimes
\left[
\begin{array}{cc}
{\bf I}_-&{\bf 0}_{-0}
\end{array}
\right]
\left(-
{\bf T}_{\ominus\ominus}(\btheta)
\right)^{-1}
{\bf T}_{\ominus +}(\btheta)
{\bf \Psi}(\btheta)
\right)
}
\nonumber\\
&&
+
\bxi(\btheta)
\left[
\begin{array}{cc}
{\bf I}_-&{\bf 0}_{-0}
\end{array}
\right]
\Bigg\{
\left(
\frac{\partial}{\partial\btheta}
\left(-
{\bf T}_{\ominus\ominus}(\btheta)
\right)^{-1}
\right)
\left(
{\bf I}\otimes
{\bf T}_{\ominus +}(\btheta)
{\bf \Psi}(\btheta)
\right)
\nonumber
\\
&&
+
\left(-
{\bf T}_{\ominus\ominus}(\btheta)
\right)^{-1}
\left(
\frac{\partial}{\partial\btheta}
{\bf T}_{\ominus +}(\btheta)
\right)
\left(
{\bf I}\otimes
{\bf \Psi}(\btheta)
\right)
+
\left(-
{\bf T}_{\ominus\ominus}(\btheta)
\right)^{-1}
{\bf T}_{\ominus +}(\btheta)
\left(
\frac{\partial}{\partial\btheta}
{\bf \Psi}(\btheta)
\right)
\Bigg\}
\end{eqnarray*}
and
\begin{eqnarray*}
\frac{\partial}{\partial\btheta}\bxi(\btheta)
{\bf 1}&=&0.
\end{eqnarray*}
\end{Theorem}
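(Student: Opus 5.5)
The plan is to start from the known closed-form stationary distribution of the SFM (Bean, O'Reilly and Taylor~\cite{BOT05,BOT08}, da Silva Soares~\cite{DaSilva}) written as an explicit function of $\btheta$, and then differentiate term by term using the product rule~\eqref{eq:mproduct} and the inverse rule~\eqref{eq:minverse}. Concretely, I will first recall that, for $x>0$,
\[
\left[\begin{array}{cc}\bpi_{+}(x;\btheta)&\bpi_{-}(x;\btheta)\end{array}\right]
=
\left[\begin{array}{cc}{\bf p}_{-}(\btheta)&{\bf p}_{0}(\btheta)\end{array}\right]
{\bf T}_{\ominus +}(\btheta)\,e^{{\bf K}(\btheta)x}
\left[\begin{array}{cc}({\bf C}_{+}(\btheta))^{-1}&\bPsi(\btheta)|{\bf C}_{-}(\btheta)|^{-1}\end{array}\right],
\]
and $\bpi_0(x;\btheta)=[\bpi_+\ \bpi_-](x;\btheta)\,{\bf T}_{\pm 0}(\btheta)(-{\bf T}_{00}(\btheta))^{-1}$. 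I will also recall ${\bf p}_-=\alpha\,\bxi[\,{\bf I}_-\ {\bf 0}\,](-{\bf T}_{\ominus\ominus})^{-1}$-type representations, i.e.\ $[{\bf p}_-\ {\bf p}_0]=\alpha[\bxi\ {\bf 0}](-{\bf T}_{\ominus\ominus})^{-1}$, where $\bxi$ is the stationary vector of the embedded chain of phases at successive returns to level $0$ in $\mathcal{S}_-$ (equations~\eqref{ksi1}--\eqref{ksi2}), and $\alpha$ is fixed by total mass one, which gives~\eqref{alpha} after integrating the densities over $x$ using $\int_0^\infty e^{{\bf K}x}dx=(-{\bf K})^{-1}$. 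Verifying these representations is a matter of citing the literature, plus a short censoring argument for the $\mathcal{S}_0$ phases at level $0$.

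Next I will take Laplace transforms in $x$ and interchange $\partial/\partial\btheta$ with the integral, as in the display preceding the statement. Since $\Psi$ is stochastic when $\mu<0$ and ${\bf K}$ then has eigenvalues in the open left half-plane, $e^{{\bf K}x}$ decays exponentially uniformly for $\btheta$ in a neighbourhood, which justifies dominated convergence. Then I will apply the product rule successively to the four-factor product $[{\bf p}_-\ {\bf p}_0]\cdot{\bf T}_{\ominus+}\cdot(\cdot)\cdot[{\bf C}_+^{-1}\ \bPsi|{\bf C}_-|^{-1}]$, producing one term per factor with the Kronecker ${\bf I}\otimes$ padding. In the Laplace domain the middle factor is $(v{\bf I}-{\bf K})^{-1}$; I read the stated $(-{\bf K})^{-1}$ as this resolvent, possibly at $v=0$. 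Its derivative comes from~\eqref{eq:minverse}, or equivalently~\eqref{eq:exptrick}. The derivatives of ${\bf C}_\pm^{-1}$ and $(-{\bf T}_{00})^{-1}$ likewise come from~\eqref{eq:minverse}, and $\partial\bPsi/\partial\btheta$ comes from Theorem~\ref{th:PsiDer} at $s=0$, with $\partial{\bf K}/\partial\btheta$ then obtained by the product rule. The $\bpi_0$ formula follows in the same way from its two-factor product.

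For ${\bf p}$, I will differentiate $\alpha[\bxi\ {\bf 0}](-{\bf T}_{\ominus\ominus})^{-1}$ by the product rule, and differentiate $\alpha$, a reciprocal of a scalar, via the quotient or inverse rule applied to the product inside the braces. Finally, differentiating~\eqref{ksi1}--\eqref{ksi2} gives the linear system for $\partial\bxi/\partial\btheta$ together with the constraint $\partial\bxi/\partial\btheta\,{\bf 1}=0$.

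I expect the main obstacle to be justifying that this last system determines $\partial\bxi/\partial\btheta$ uniquely. The matrix $P=[{\bf I}_-\ {\bf 0}](-{\bf T}_{\ominus\ominus})^{-1}{\bf T}_{\ominus+}\bPsi$ is stochastic, so ${\bf I}-P$ is singular. My plan is to assume $P$ is irreducible and note that ${\bf I}-P+{\bf 1}\bxi$ is then invertible (the standard fundamental-matrix argument), so the system together with the normalisation has a unique solution. Differentiability of $\bPsi$ in $\btheta$ also needs care: I will appeal to the implicit function theorem on the Riccati equation~\eqref{eq:PsiRic}, whose Sylvester operator is invertible because ${\bf K}$ and ${\bf D}$ have disjoint spectra in the transient or positive-recurrent case.
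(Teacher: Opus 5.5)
Your proposal follows the paper's proof. The paper also quotes the closed-form stationary distribution (O'Reilly and Scheinhardt, Lemma 1), writes its Laplace transform in $x$, and differentiates factor by factor with \eqref{eq:mproduct}--\eqref{eq:minverse]}. Your reading of $(-{\bf K}(\btheta))^{-1}$ as the resolvent $(v{\bf I}-{\bf K}(\btheta))^{-1}$ is the right one, since the displayed formula is only literally correct at $v=0$. Your dominated-convergence, fundamental-matrix and implicit-function arguments supply justification the paper omits.

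Two small corrections. First, the Sylvester operator $X\mapsto {\bf K}X+X{\bf D}$ is invertible iff $\sigma({\bf K})\cap\sigma(-{\bf D})=\emptyset$, not $\sigma({\bf K})\cap\sigma({\bf D})=\emptyset$; this holds when $\mu<0$ because $\sigma({\bf K})$ lies in the open left half-plane and ${\bf D}$ is a generator. Second, your inverse rule gives $\frac{\partial}{\partial\btheta}\alpha(\btheta)=-\alpha(\btheta)^{2}\,\frac{\partial}{\partial\btheta}\big(\alpha(\btheta)^{-1}\big)$, so the termwise $\{\cdot\}^{-1}$ in the stated formula for $\frac{\partial}{\partial\btheta}\alpha(\btheta)$ must be read as $-\alpha(\btheta)^{2}$ times the sum of the three braced expressions.
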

{\bf Proof:} By O'Reilly and Scheinhardt~\cite[Lemma 1]{OW17},
\begin{eqnarray}
\left[
\begin{array}{cc}
{\bf p}_{-}(\btheta) & {\bf p}_{0}(\btheta)
\end{array}
\right]&=&
\alpha(\btheta)
\left[
\begin{array}{cc}
\bxi(\btheta)&{\bf 0}
\end{array}
\right]
\left(-
{\bf T}_{\ominus\ominus}(\btheta)
\right)^{-1},\label{mass}\\
\left[
\begin{array}{cc}
\bpi_{+}(x;\btheta) & \bpi_{-}(x;\btheta)
\end{array}
\right]
&=&
\left[
\begin{array}{cc}
{\bf p}_{-}(\btheta) & {\bf p}_{0}(\btheta)
\end{array}
\right]
{\bf T}_{\ominus +}(\btheta)
e^{{\bf K}\left(\btheta\right)x}
\nonumber \\
&&
\quad \quad
\times
\left[
\begin{array}{cc}
({\bf C}_{+}(\btheta))^{-1}
&{\bf \Psi}(\btheta)|{\bf C}_{-}(\btheta)|^{-1}
\end{array}
\right],\\
\bpi_{0}(x;\btheta)&=&
\left[
\begin{array}{cc}
\bpi_{+}(x;\btheta) & \bpi_{-}(x;\btheta)
\end{array}
\right]
{\bf T}_{\pm 0}(\btheta)
(-{\bf T}_{00}(\btheta))^{-1},
\label{bpi0}
\end{eqnarray}
where $\alpha(\btheta)$ is given by~\eqref{alpha} and $\bxi(\btheta)$ is the unique solution of~\eqref{ksi1}-\eqref{ksi2}.

Consider the Laplace transform
\begin{eqnarray*}
\mathcal{L}_{\bpi}(v;\btheta)
&=&
\int_{x=0}^{\infty}e^{-vx}\bpi(x;\btheta)dx
=
\left[
\begin{array}{ccc}
\mathcal{L}_{\bpi}(v;\btheta)_{+} & \mathcal{L}_{\bpi}(v;\btheta)_{-} &
 \mathcal{L}_{\bpi}(v;\btheta)_{0}
\end{array}
\right]
\end{eqnarray*}
which, by above, is given by
\begin{eqnarray*}
\left[
\begin{array}{cc}
\mathcal{L}_{\bpi}(v;\btheta)_{+} & \mathcal{L}_{\bpi}(v;\btheta)_{-}
\end{array}
\right]
&=&
\left[
\begin{array}{cc}
{\bf p}_{-}(\btheta) & {\bf p}_{0}(\btheta)
\end{array}
\right]
{\bf T}_{\ominus +}(\btheta)
\left(-
{\bf K}(\btheta)
\right)^{-1}
\nonumber \\
&&
\quad \quad
\times
\left[
\begin{array}{cc}
({\bf C}_{+}(\btheta))^{-1}
&{\bf \Psi}(\btheta)|{\bf C}_{-}(\btheta)|^{-1}
\end{array}
\right],\\
\mathcal{L}_{\bpi}(v;\btheta)_{0}&=&
\left[
\begin{array}{cc}
\mathcal{L}_{\bpi}(v;\btheta)_{+} & \mathcal{L}_{\bpi}(v;\btheta)_{-}
\end{array}
\right]
{\bf T}_{\pm 0}(\btheta)
(-{\bf T}_{00}(\btheta))^{-1}.
\end{eqnarray*}
The result then follows by the matrix derivative calculus~\eqref{eq:mproduct}-\eqref{eq:minverse}. with
\begin{align*}
\lefteqn{
\left[
\begin{array}{cc}
\mathcal{L}_{\partial\bpi}(v;\btheta)_{+} & \mathcal{L}_{\partial\bpi}(v;\btheta)_{-}
\end{array}
\right]
}
\nonumber
\\
=&
\Bigg(
\frac{\partial}{\partial \btheta}
\left[
\begin{array}{cc}
{\bf p}_{-}(\btheta) & {\bf p}_{0}(\btheta)
\end{array}
\right]
\Bigg)
\Bigg(
{\bf I}
\otimes
{\bf T}_{\ominus +}(\btheta)
\left(-
{\bf K}(\btheta)
\right)^{-1}
\left[
\begin{array}{cc}
({\bf C}_{+}(\btheta))^{-1}
&{\bf \Psi}(\btheta)|{\bf C}_{-}(\btheta)|^{-1}
\end{array}
\right]
\Bigg)
\nonumber
\\
&
+
\left[
\begin{array}{cc}
{\bf p}_{-}(\btheta) & {\bf p}_{0}(\btheta)
\end{array}
\right]
\Bigg(
\frac{\partial}{\partial\btheta}
{\bf T}_{\ominus +}(\btheta)
\left(-
{\bf K}(\btheta)
\right)^{-1}
\left[
\begin{array}{cc}
({\bf C}_{+}(\btheta))^{-1}
&{\bf \Psi}(\btheta)|{\bf C}_{-}(\btheta)|^{-1}
\end{array}
\right]
\Bigg)
\nonumber
\\
=&
\Bigg(
\frac{\partial}{\partial \btheta}
\left[
\begin{array}{cc}
{\bf p}_{-}(\btheta) & {\bf p}_{0}(\btheta)
\end{array}
\right]
\Bigg)
\Bigg(
{\bf I}
\otimes
{\bf T}_{\ominus +}(\btheta)
\left(-
{\bf K}(\btheta)
\right)^{-1}
\left[
\begin{array}{cc}
({\bf C}_{+}(\btheta))^{-1}
&{\bf \Psi}(\btheta)|{\bf C}_{-}(\btheta)|^{-1}
\end{array}
\right]
\Bigg)
\nonumber
\\
&
+
\left[
\begin{array}{cc}
{\bf p}_{-}(\btheta) & {\bf p}_{0}(\btheta)
\end{array}
\right]
\Bigg(
\frac{\partial}{\partial\btheta}
{\bf T}_{\ominus +}(\btheta)
\Bigg)
\Bigg(
{\bf I}\otimes
\left(-
{\bf K}(\btheta)
\right)^{-1}
\left[
\begin{array}{cc}
({\bf C}_{+}(\btheta))^{-1}
&{\bf \Psi}(\btheta)|{\bf C}_{-}(\btheta)|^{-1}
\end{array}
\right]
\Bigg)
\nonumber
\\
&
+
\left[
\begin{array}{cc}
{\bf p}_{-}(\btheta) & {\bf p}_{0}(\btheta)
\end{array}
\right]
{\bf T}_{\ominus +}(\btheta)
\Bigg(
\frac{\partial}{\partial\btheta}
\left(-
{\bf K}(\btheta)
\right)^{-1}
\Bigg)
\Bigg(
{\bf I}\otimes
\left[
\begin{array}{cc}
({\bf C}_{+}(\btheta))^{-1}
&{\bf \Psi}(\btheta)|{\bf C}_{-}(\btheta)|^{-1}
\end{array}
\right]
\Bigg)
\nonumber
\\
&
+
\left[
\begin{array}{cc}
{\bf p}_{-}(\btheta) & {\bf p}_{0}(\btheta)
\end{array}
\right]
{\bf T}_{\ominus +}(\btheta)
\left(-
{\bf K}(\btheta)
\right)^{-1}
\nonumber\\
&
\quad\quad
\times
\left[
\begin{array}{cc}
\frac{\partial}{\partial\btheta}({\bf C}_{+}(\btheta))^{-1}
&
\frac{\partial}{\partial\btheta}{\bf \Psi}(\btheta)
\left(
{\bf I}\otimes
|{\bf C}_{-}(\btheta)|^{-1}
\right)
+
{\bf \Psi}(\btheta)
\frac{\partial}{\partial\btheta}
|{\bf C}_{-}(\btheta)|^{-1}
\end{array}
\right]
\nonumber
\end{align*}
and the remaining quantities are evaluated in a similar manner. \rule{9pt}{9pt}

We may also write the expressions for the probability density vectors $\bpi(x)$ directly as follows.
\begin{Corollary}
For $x>0$, we have,
\begin{eqnarray*}
\lefteqn{
\left[
\begin{array}{cc}
\frac{\partial}{\partial\btheta}
\bpi_{+}(\btheta) &
\frac{\partial}{\partial\btheta}
\bpi_{-}(\btheta)
\end{array}
\right]
}
\nonumber
\\
&=&
\Bigg(
\frac{\partial}{\partial \btheta}
\left[
\begin{array}{cc}
{\bf p}_{-}(\btheta) & {\bf p}_{0}(\btheta)
\end{array}
\right]
\Bigg)
\Bigg(
{\bf I}
\otimes
{\bf T}_{\ominus +}(\btheta)
e^{{\bf K}\left(\btheta\right)x}
\left[
\begin{array}{cc}
({\bf C}_{+}(\btheta))^{-1}
&{\bf \Psi}(\btheta)|{\bf C}_{-}(\btheta)|^{-1}
\end{array}
\right]
\Bigg)
\nonumber
\\
&&
+
\left[
\begin{array}{cc}
{\bf p}_{-}(\btheta) & {\bf p}_{0}(\btheta)
\end{array}
\right]
\Bigg\{
\quad
\Bigg(
\frac{\partial}{\partial\btheta}
{\bf T}_{\ominus +}(\btheta)
\Bigg)
\Bigg(
e^{{\bf K}\left(\btheta\right)x}
\left[
\begin{array}{cc}
({\bf C}_{+}(\btheta))^{-1}
&{\bf \Psi}(\btheta)|{\bf C}_{-}(\btheta)|^{-1}
\end{array}
\right]
\Bigg)
\nonumber
\\
&&
\quad
+
{\bf T}_{\ominus +}(\btheta)
\Bigg(
\frac{\partial}{\partial\btheta}
e^{{\bf K}\left(\btheta\right)x}
\Bigg)
\Bigg(
{\bf I}\otimes
\left[
\begin{array}{cc}
({\bf C}_{+}(\btheta))^{-1}
&{\bf \Psi}(\btheta)|{\bf C}_{-}(\btheta)|^{-1}
\end{array}
\right]
\Bigg)
\nonumber
\\
&&
\quad
+
{\bf T}_{\ominus +}(\btheta)
e^{{\bf K}\left(\btheta\right)x}
\left[
\begin{array}{cc}
\frac{\partial}{\partial\btheta}({\bf C}_{+}(\btheta))^{-1}
&
\frac{\partial}{\partial\btheta}{\bf \Psi}(\btheta)
\left(
{\bf I}\otimes
|{\bf C}_{-}(\btheta)|^{-1}
\right)
+
{\bf \Psi}(\btheta)
\frac{\partial}{\partial\btheta}
|{\bf C}_{-}(\btheta)|^{-1}
\end{array}
\right]
\quad
\Bigg\}
,
\end{eqnarray*}
where
\begin{eqnarray*}
\int_{x=0}^{\infty}e^{-vx}
\frac{\partial
}
{ \partial \btheta}
e^{
{\bf K}(\btheta)x
}dx
&=&
({\bf K}(\btheta)-v{\bf I})^{-1}
\times
\frac{\partial {\bf K}(\btheta)}{\partial \btheta}
\times
\left(
{\bf I}
\otimes ({\bf K}(\btheta))^{-1}
\right).
\end{eqnarray*}
\end{Corollary}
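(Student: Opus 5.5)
The plan is to differentiate the closed-form density from O'Reilly and Scheinhardt directly, without going through the Laplace transform. The starting point is the representation used in the proof of the preceding theorem, namely, for $x>0$,
\[
\left[\begin{array}{cc}\bpi_{+}(x;\btheta) & \bpi_{-}(x;\btheta)\end{array}\right]
=\left[\begin{array}{cc}{\bf p}_{-}(\btheta) & {\bf p}_{0}(\btheta)\end{array}\right]{\bf T}_{\ominus +}(\btheta)\,e^{{\bf K}(\btheta)x}\,{\bf M}(\btheta),
\]
where ${\bf M}(\btheta)=\left[\begin{array}{cc}({\bf C}_{+}(\btheta))^{-1} & \bPsi(\btheta)|{\bf C}_{-}(\btheta)|^{-1}\end{array}\right]$. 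This is a product of four $\btheta$-dependent factors, so I will apply the product rule \eqref{eq:mproduct} three times. Grouping as $\big({\bf p}\big)\times\big({\bf T}_{\ominus+}e^{{\bf K}x}{\bf M}\big)$ gives the first term, with the factor $\left({\bf I}\otimes{\bf T}_{\ominus+}e^{{\bf K}x}{\bf M}\right)$. The second application, to ${\bf T}_{\ominus+}\times\big(e^{{\bf K}x}{\bf M}\big)$, produces the term containing $\frac{\partial}{\partial\btheta}{\bf T}_{\ominus+}$. The third, to $e^{{\bf K}x}\times{\bf M}$, produces the term containing $\frac{\partial}{\partial\btheta}e^{{\bf K}x}$, with the factor $\left({\bf I}\otimes{\bf M}\right)$, together with the term containing $\frac{\partial}{\partial\btheta}{\bf M}$.

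For $\frac{\partial}{\partial\btheta}{\bf M}$ I will differentiate blockwise. The first block uses \eqref{eq:minverse} for $({\bf C}_+)^{-1}$. The second block uses \eqref{eq:mproduct} for $\bPsi|{\bf C}_-|^{-1}$, with $\frac{\partial}{\partial\btheta}\bPsi$ taken from Theorem~\ref{th:PsiDer} at $s\to0$ and $\frac{\partial}{\partial\btheta}|{\bf C}_-|^{-1}$ from \eqref{eq:minverse}. The derivative $\frac{\partial}{\partial\btheta}[{\bf p}_-\ {\bf p}_0]$ is already given in the preceding theorem, so it can be quoted.

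For the matrix exponential factor, I will use the transform identity \eqref{eq:exptrick} with ${\bf A}={\bf K}(\btheta)$. This is valid for $v$ with real part large enough that $({\bf K}(\btheta)-v{\bf I})^{-1}$ exists and $e^{-vx}e^{{\bf K}x}$ is integrable. Interchanging $\partial/\partial\btheta$ with the integral needs a dominated-convergence justification, using uniform exponential bounds on $e^{{\bf K}(\btheta)x}$ and its derivative series for $\btheta$ in a neighbourhood. The resulting transform is $({\bf K}-v{\bf I})^{-1}\frac{\partial{\bf K}}{\partial\btheta}\left({\bf I}\otimes({\bf K}-v{\bf I})^{-1}\right)$. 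Its right-hand factor should be $({\bf K}-v{\bf I})^{-1}$; I will check this carefully against the displayed formula, since the statement seems to carry a typographical simplification there.

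The main obstacle is bookkeeping rather than ideas. The Kronecker factors ${\bf I}\otimes(\cdot)$ must be placed consistently with the paper's convention, in which $\partial{\bf A}/\partial\btheta$ is the horizontal stacking of the partial derivatives $\partial{\bf A}/\partial\theta_k$. I will verify this by checking the formula componentwise for a scalar parameter $\theta_k$, where every Kronecker factor collapses to an ordinary product and the identity reduces to the usual Leibniz rule.
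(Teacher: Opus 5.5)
Your proposal is correct and follows the same route as the paper. There the corollary comes, with no separate argument, from applying the product rule \eqref{eq:mproduct} to the O'Reilly--Scheinhardt representation $[\bpi_{+}(x;\btheta)\ \ \bpi_{-}(x;\btheta)]=[{\bf p}_{-}(\btheta)\ \ {\bf p}_{0}(\btheta)]\,{\bf T}_{\ominus+}(\btheta)e^{{\bf K}(\btheta)x}{\bf M}(\btheta)$ quoted in the preceding proof, with \eqref{eq:exptrick} handling the exponential factor. Your suspicion about the displayed transform is right: the last factor should be $\big({\bf I}\otimes({\bf K}(\btheta)-v{\bf I})^{-1}\big)$, as in \eqref{eq:exptrick}, and likewise your product rule will correctly give the factor $\big({\bf I}\otimes e^{{\bf K}(\btheta)x}{\bf M}(\btheta)\big)$ after $\frac{\partial}{\partial\btheta}{\bf T}_{\ominus+}(\btheta)$, where the displayed statement drops the ${\bf I}\otimes$.
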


\section{Transient quantities: $\frac{\partial
}
{ \partial \btheta}
{\bf f}(x,t;\btheta)
$ and $\frac{\partial}{\partial\btheta}
{\bf p}(t;\btheta)$}\label{sec:TraQua}

Suppose that the SFM starts from level $X(0)= z>0$ and does so in some phase $\varphi(0)\in\mathcal{S}_-$ according to distribution ${\bf g}_-=[g_i]_{i\in\mathcal{S}_-}$, where $g_i=\mathbb{P}(\varphi(0)=i)$. Consider the probability density vectors ${\bf f}(x,t;\btheta)$, $x>0$, such that
\begin{equation*}
[{\bf f}(x,t;\btheta)]_i=\frac{\partial}{\partial x}\mathbb{P}(X(t)\leq x,\varphi(t)=i),
\end{equation*}
partitioned according to $\mathcal{S}_+\cup\mathcal{S}_-\cup\mathcal{S}_0$,
\begin{equation}
\label{eq:f}
{\bf f}(x,t;\btheta)=\left[
\begin{array}{ccc}
{\bf f}_{+}(x,t;\btheta) & {\bf f}_{-}(x,t;\btheta) & {\bf f}_{0}(x,t;\btheta)
\end{array}
\right]
,
\end{equation}
and the probability vector ${\bf p}(t;\btheta)$, such that $[{\bf p}(t;\btheta)]_i=\mathbb{P}(X(t)=0,\varphi(t)=i)$, partitioned according to $\mathcal{S}_{-}\cup \mathcal{S}_{0}$,
\begin{equation*}
{\bf p}(t;\btheta)=\left[
\begin{array}{cc}
{\bf p}_{-}(t;\btheta) & {\bf p}_{0}(t;\btheta)
\end{array}
\right].
\end{equation*}

Also, consider the corresponding Laplace transforms,
\begin{eqnarray*}
\widetilde {\bf f}(x,s;\btheta)
&=&\int_{t=0}^{\infty}
e^{-st}{\bf f}(x,t;\btheta)dt
,\\
\widetilde {\bf p}(s;\btheta)
&=&\int_{t=0}^{\infty}
e^{-st}{\bf p}(t;\btheta)dt,
\end{eqnarray*}
and note the following useful equivalences,
\begin{eqnarray*}
\mathcal{L}_{\partial{\bf f}}(x,s;\btheta)
&=&
\int_{t=0}^{\infty}e^{-st}
\frac{\partial}{\partial\btheta}
{\bf f}(x,t;\btheta)dt
=
\frac{\partial}{\partial\btheta}
\int_{t=0}^{\infty}e^{-st}
{\bf f}(x,t;\btheta)dt
=
\frac{\partial}{\partial\btheta}
\widetilde {\bf f}(x,s;\btheta),
\\
\mathcal{L}_{\partial{\bf p}}(s;\btheta)
&=&
\int_{t=0}^{\infty}e^{-st}
\frac{\partial}{\partial\btheta}
{\bf p}(t;\btheta)dt
=
\frac{\partial}{\partial\btheta}
\int_{t=0}^{\infty}e^{-st}
{\bf p}(t;\btheta)dt
=
\frac{\partial}{\partial\btheta}
\widetilde {\bf p}(s;\btheta),
\end{eqnarray*}
so that we may evaluate $\frac{\partial}{\partial\btheta}
{\bf f}(x,t;\btheta)$ and $\frac{\partial}{\partial\btheta}
{\bf p}(t;\btheta)$ by numerically inverting the above Laplace transforms $\mathcal{L}_{\partial{\bf f}}(x,s;\btheta)$ and $\mathcal{L}_{\partial{\bf p}}(s;\btheta)$ respectively, by applying standard  techniques~\cite{abate1995numerical,DenIseger_2006,horvath2020numerical}.

In the transient analysis, we also need to consider the distributions of the first hitting times. We address this in the lemma below. For $0<x<y$, denote
\begin{eqnarray*}
{\bf G}^{(x,y)}(s;\btheta)
=
	\left[
	\begin{array}{cc}
		{\bf O}_{++}&
{\bf G}_{+-}^{(x,y)}(s;\btheta)
\\
		{\bf O}_{-+}&
  {\bf G}_{--}^{(x,y)}(s;\btheta)
	\end{array}
	\right],
 \quad
 {\bf H}^{(x,y)}(s;\btheta)
=
	\left[
	\begin{array}{cc}
 {\bf H}_{++}^{(x,y)}(s;\btheta)&{\bf O}_{+-}\\
  {\bf H}_{-+}^{(x,y)}(s;\btheta)&{\bf O}_{--}
	\end{array}
	\right],
\end{eqnarray*}
where $[{\bf G}^{(x,y)}(s;\btheta)]_{ij}
=\mathbb{E}\left( e^{-s\delta(0)} I(\delta(0)<\delta(y),\varphi(\delta(0))=j)\ |\ X(0)=x,\varphi(0)=i\right)$ is the Laplace transform of the distribution of the time to hit level zero before hitting level $y$ and do so in phase $j$, given start from level $x$ in phase $i$, and $[{\bf H}^{(x,y)}(s;\btheta)]_{ij}
=\mathbb{E}\left( e^{-s\delta(y)} I(\delta(y)<\delta(0),\varphi(\delta(y))=j)\ |\ X(0)=x,\varphi(0)=i \right)$ is the Laplace transform of the distribution of the time to hit level $y$ before hitting level zero and do so in phase $j$, given start from level $x$ in phase $i$.

Also, let
\begin{eqnarray*}
{\bf G}^{(x)}(s;\btheta)
=
	\left[
	\begin{array}{cc}
		{\bf O}_{++}&
{\bf G}_{+-}^{(x)}(s;\btheta)
\\
		{\bf O}_{-+}&
  {\bf G}_{--}^{(x)}(s;\btheta)
	\end{array}
	\right],
 \quad
 {\bf H}^{(x)}(s;\btheta)
=
	\left[
	\begin{array}{cc}
 {\bf H}_{++}^{(x)}(s;\btheta)&{\bf O}_{+-}\\
  {\bf H}_{-+}^{(x)}(s;\btheta)&{\bf O}_{--}
	\end{array}
	\right],
\end{eqnarray*}
where $[{\bf G}^{(x)}(s;\btheta)]_{ij}
=\mathbb{E}\left( e^{-s\delta(0)} I(\varphi(\delta(0))=j)\ |\ X(0)=x,\varphi(0)=i\right)$ is the Laplace transform of the distribution of the time to hit level zero and do so in phase $j$, given start from level $x$ in phase $i$, and, the SFM $\{(\widetilde{X}(t),\varphi(t)):t\geq 0\}$ defined in \eqref{SFM2} (without the zero boundary), $[{\bf H}^{(x)}(s;\btheta)]_{ij}
=\mathbb{E}\left( e^{-s\widetilde\delta(y)} I(\varphi(\widetilde\delta(y))=j)\ |\ \widetilde{X}(0)=x,\varphi(0)=i\right)$ is the Laplace transform of the distribution of the time to hit level $y$ and do so in phase $j$, given start from level $x$ in phase $i$.

\begin{Lemma}\label{lem:GHder}
For any $0<x<y$,
\begin{eqnarray*}
\lefteqn{
\left[
	\begin{array}{cc}
\frac{\partial}{\partial\btheta}{\bf G}^{(x,y)}(s;\btheta)&
 \frac{\partial}{\partial\btheta}{\bf H}^{(x,y)}(s;\btheta)
	\end{array}
	\right]
 }
 \nonumber
 \\
	&=&
\Bigg(
 \frac{\partial}{\partial\btheta}
 	\left[
	\begin{array}{cccc}
 {\bf 0}_{++}&
\bPsi(s;\btheta)e^{{\bf D}(s;\btheta)x}&
e^{{\bf U}(s;\btheta)(y-x)}&
{\bf 0}_{+-}
\\
{\bf 0}_{-+}&
e^{{\bf D}(s;\btheta)x}&
\bXi(s;\btheta)e^{{\bf U}(s;\btheta)(y-x)}&
{\bf 0}_{--}
	\end{array}
	\right]
\Bigg)
\nonumber
\\
&&
\quad
\times
\left(
{\bf I}\otimes
 \left[
	\begin{array}{cccc}
{\bf I}&{\bf 0}&
e^{{\bf U}(s;\btheta)y}&
{\bf 0}_{+-}
\\
{\bf 0}&{\bf I}
&
\bXi(s)e^{{\bf U}(s;\btheta)y}
&{\bf 0}_{--}
\\
{\bf 0}_{++}&
\bPsi(s;\btheta)e^{{\bf D}(s;\btheta)y}&
{\bf I}&{\bf 0}
\\
{\bf 0}_{-+}&e^{{\bf D}(s;\btheta)y}&
{\bf 0}&{\bf I}
	\end{array}
	\right]
\right)
	\nonumber\\
 &&
 +
 	\left[
	\begin{array}{cccc}
 {\bf 0}_{++}&
\bPsi(s;\btheta)e^{{\bf D}(s;\btheta)x}&
e^{{\bf U}(s;\btheta)(y-x)}&
{\bf 0}_{+-}
\\
{\bf 0}_{-+}&
e^{{\bf D}(s;\btheta)x}&
\bXi(s;\btheta)e^{{\bf U}(s;\btheta)(y-x)}&
{\bf 0}_{--}
	\end{array}
	\right]
 \nonumber
 \\
 &&
 \quad
 \times
 \left(
 \frac{\partial}{\partial\btheta}
 \left[
	\begin{array}{cccc}
{\bf I}&{\bf 0}&
e^{{\bf U}(s;\btheta)y}&
{\bf 0}_{+-}
\\
{\bf 0}&{\bf I}
&
\bXi(s)e^{{\bf U}(s;\btheta)y}
&{\bf 0}_{--}
\\
{\bf 0}_{++}&
\bPsi(s;\btheta)e^{{\bf D}(s;\btheta)y}&
{\bf I}&{\bf 0}
\\
{\bf 0}_{-+}&e^{{\bf D}(s;\btheta)y}&
{\bf 0}&{\bf I}
	\end{array}
	\right]^{-1}
 \right)
 ,
\end{eqnarray*}
where
\begin{eqnarray*}
\frac{\partial
}
{ \partial \btheta}
\left(
\bPsi(s;\btheta)e^{{\bf D}(s;\btheta)y}
\right)
&=&
\frac{\partial \bPsi(s;\btheta)
}
{ \partial \btheta}
\left(
{\bf I}\otimes e^{{\bf D}(s;\btheta)y}
\right)
+
\bPsi(s;\btheta)
\left(
\frac{\partial
} { \partial \btheta}
e^{{\bf D}(s;\btheta)y}
\right)
,
\\
\frac{\partial
}
{ \partial \btheta}
\left(
\bXi(s;\btheta)e^{{\bf U}(s;\btheta)y}
\right)
&=&
\frac{\partial \bXi(s;\btheta)
}
{ \partial \btheta}
\left(
{\bf I}\otimes e^{{\bf U}(s;\btheta)y}
\right)
+
\bXi(s;\btheta)
\left(
\frac{\partial
} { \partial \btheta}
e^{{\bf U}(s;\btheta)y}
\right)
,
\\
\int_{x=0}^{\infty}e^{-vx}
\frac{\partial
}
{ \partial \btheta}
e^{
{\bf D}(s,\btheta)x
}dx
&=&
\int_{y=0}^{\infty}e^{-vy}
\frac{\partial
}
{ \partial \btheta}
e^{
{\bf D}(s,\btheta)y
}dy
\nonumber\\
&=&
({\bf D}(s,\btheta)-v{\bf I})^{-1}
\times
\frac{\partial {\bf D}(s,\btheta)}{\partial \btheta}
\times
\left(
{\bf I}
\otimes ({\bf D}(s,\btheta)-v{\bf I})^{-1}
\right)
\\
\int_{y=0}^{\infty}e^{-vy}
\frac{\partial
}
{ \partial \btheta}
e^{
{\bf U}(s,\btheta)y
}
dy
&=&
\int_{y=x}^{\infty}e^{-v(y-x)}
\frac{\partial
}
{ \partial \btheta}
e^{
{\bf U}(s,\btheta)(y-x)
}
dy
\nonumber
\\
&=&
({\bf U}(s,\btheta)-v{\bf I})^{-1}
\times
\frac{\partial {\bf U}(s,\btheta)}{\partial \btheta}
\times
\left(
{\bf I}
\otimes ({\bf U}(s,\btheta)-v{\bf I})^{-1}
\right)
,
\end{eqnarray*}
with ${\bf G}^{(x,x)}(s;\btheta)=\lim_{y\downarrow x}{\bf G}^{(x,y)}(s;\btheta)$ and ${\bf H}^{(x,x)}(s;\btheta)=\lim_{y\downarrow x}{\bf H}^{(x,y)}(s;\btheta)$.

\end{Lemma}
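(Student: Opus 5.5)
The plan is to start from the known closed-form expression for the first-passage matrices in a band $(0,y)$, and then differentiate it with the rules \eqref{eq:mproduct}--\eqref{eq:minverse}. By Bean, O'Reilly and Taylor~\cite{BOT05,BOT08}, for $0<x<y$ the pair $[{\bf G}^{(x,y)}(s;\btheta)\ {\bf H}^{(x,y)}(s;\btheta)]$ is obtained from a linear system. On the event of no boundary hit, the process either descends to level $0$ or ascends to level $y$. The unrestricted descent from $x$ is described by $[\bPsi e^{{\bf D}x};\ e^{{\bf D}x}]$, and the unrestricted ascent to $y$ by $[e^{{\bf U}(y-x)};\ \bXi e^{{\bf U}(y-x)}]$.

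First-passage decomposition, conditioning on whether level $y$ (resp.\ $0$) is reached first, gives
\[
[{\bf G}^{(x,y)}\ {\bf H}^{(x,y)}]\,{\bf M}(y)={\bf N}(x,y).
\]
Here ${\bf N}(x,y)$ is the $2\times 4$ block matrix appearing first in the statement (without $\partial/\partial\btheta$). The matrix ${\bf M}(y)$ is the $4\times4$ block matrix whose off-diagonal blocks $\bPsi e^{{\bf D}y}$, $e^{{\bf D}y}$, $e^{{\bf U}y}$, $\bXi e^{{\bf U}y}$ encode the passages $y\to0$ and $0\to y$. I will quote this identity from~\cite{BOT05,BOT08} rather than re-derive it, checking only that the block layout matches the one in the statement. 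Invertibility of ${\bf M}(y)$ for $Re(s)>0$ also comes from those references, via spectral separation of ${\bf D}$ and ${\bf U}$. This yields
\[
[{\bf G}^{(x,y)}\ {\bf H}^{(x,y)}]={\bf N}(x,y)\,{\bf M}(y)^{-1}.
\]

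Differentiating this product with \eqref{eq:mproduct} gives
\[
\left(\tfrac{\partial}{\partial\btheta}{\bf N}\right)\left({\bf I}\otimes {\bf M}^{-1}\right)+{\bf N}\,\tfrac{\partial}{\partial\btheta}{\bf M}^{-1},
\]
which is the displayed formula. I note that the statement writes ${\bf I}\otimes{\bf M}$ in the first term where ${\bf M}^{-1}$ is intended, so I will state it with the inverse. The inner derivative is $\frac{\partial}{\partial\btheta}{\bf M}^{-1}=-{\bf M}^{-1}\frac{\partial{\bf M}}{\partial\btheta}({\bf I}\otimes{\bf M}^{-1})$ by \eqref{eq:minverse}.

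The derivatives of the individual blocks are obtained by \eqref{eq:mproduct} applied to $\bPsi e^{{\bf D}y}$ and $\bXi e^{{\bf U}y}$, which gives the two product formulas listed in the lemma. The ingredients $\partial\bPsi/\partial\btheta$ and $\partial\bXi/\partial\btheta$ come from Theorems~\ref{th:PsiDer} and~\ref{th:KsiDer}. The derivatives $\partial{\bf D}/\partial\btheta$ and $\partial{\bf U}/\partial\btheta$ follow from their definitions, again by the product rule. For the exponentials, I apply the Laplace-transform device \eqref{eq:exptrick} with ${\bf A}={\bf D}(s;\btheta)$ in the variable $x$ (equivalently $y$), and with ${\bf A}={\bf U}(s;\btheta)$ in the variable $y-x$. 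The substitution $y\mapsto y-x$ accounts for the shifted integral in the statement. These steps require ${\bf D}-v{\bf I}$ and ${\bf U}-v{\bf I}$ to be invertible, which holds for $Re(v)$ large. Differentiation and integration can be interchanged by dominated convergence, since the matrix exponentials are analytic in $\btheta$ and uniformly bounded by $e^{(\|{\bf A}\|)x}$ on compact $\btheta$-sets. The case $y\downarrow x$ is handled by continuity, taking the stated limit as the definition.

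The main obstacle is the first step: justifying the representation ${\bf N}{\bf M}^{-1}$ with exactly this block arrangement and the invertibility of ${\bf M}(y)$. This must be imported correctly from the literature and kept consistent with the partition conventions. Once it is in place, the remainder is mechanical matrix calculus.
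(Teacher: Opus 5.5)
Your proposal is correct and is essentially the paper's own proof. The paper likewise imports from Bean, O'Reilly and Taylor the representation $[{\bf G}^{(x,y)}\ {\bf H}^{(x,y)}]=[{\bf G}^{(x)}\ {\bf H}^{(y-x)}]\,{\bf M}(y)^{-1}$, written via $\bPsi e^{{\bf D}x}$, $e^{{\bf D}x}$, $e^{{\bf U}(y-x)}$ and $\bXi e^{{\bf U}(y-x)}$, and then applies \eqref{eq:mproduct}, \eqref{eq:minverse} and \eqref{eq:exptrick}; your remark that the first term should carry ${\bf I}\otimes{\bf M}(y)^{-1}$ rather than ${\bf I}\otimes{\bf M}(y)$ is also right, since that is what the product rule applied to ${\bf N}{\bf M}^{-1}$ gives.
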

{\bf Proof:} By Bean, O'Reilly and Taylor~\cite{BOT05}, for any $0<x<y$,
\begin{eqnarray*}
\lefteqn{
	\left[
	\begin{array}{cccc}
		{\bf O}_{++}&{\bf G}_{+-}^{(x,y)}(s)&
  {\bf H}_{++}^{(x,y)}(s)&{\bf O}_{+-}\\
		{\bf O}_{-+}&{\bf G}_{--}^{(x,y)}(s)&{\bf H}_{-+}^{(x,y)}(s)&{\bf O}_{--}
	\end{array}
	\right]
 =
 \left[
	\begin{array}{cc}
{\bf G}^{(x)}(s)&
{\bf H}^{(y-x)}(s)
	\end{array}
	\right]
	\left[
	\begin{array}{cc}
		{\bf I}&
  {\bf H}^{(y)}(s)
  \\
{\bf G}^{(y)}(s)
  &{\bf I}
	\end{array}
	\right]^{-1}
}
 \\
 &=&
	\left[
	\begin{array}{cccc}
 {\bf 0}_{++}&
\bPsi(s;\btheta)e^{{\bf D}(s;\btheta)x}&
e^{{\bf U}(s;\btheta)(y-x)}&
{\bf 0}_{+-}
\\
{\bf 0}_{-+}&
e^{{\bf D}(s;\btheta)x}&
\bXi(s;\btheta)e^{{\bf U}(s;\btheta)(y-x)}&
{\bf 0}_{--}
	\end{array}
	\right]
 \left[
	\begin{array}{cccc}
{\bf I}&{\bf 0}&
e^{{\bf U}(s;\btheta)y}&
{\bf 0}_{+-}
\\
{\bf 0}&{\bf I}
&
\bXi(s)e^{{\bf U}(s;\btheta)y}
&{\bf 0}_{--}
\\
{\bf 0}_{++}&
\bPsi(s;\btheta)e^{{\bf D}(s;\btheta)y}&
{\bf I}&{\bf 0}
\\
{\bf 0}_{-+}&e^{{\bf D}(s;\btheta)y}&
{\bf 0}&{\bf I}
	\end{array}
	\right]^{-1}
\end{eqnarray*}
and the result follows by the matrix derivative calculus~\eqref{eq:mproduct}-\eqref{eq:minverse} and~\eqref{eq:exptrick}. \rule{9pt}{9pt}

We may also apply an alternative result as follows.
\begin{Lemma}
For any $0<x<y$,
\begin{eqnarray*}
\lefteqn{
\frac{\partial}{\partial\btheta}
\left[
	\begin{array}{cccc}
		{\bf O}_{++}&{\bf G}_{+-}^{(x,y)}(s)&
  {\bf H}_{++}^{(x,y)}(s)&{\bf O}_{+-}\\
		{\bf O}_{-+}&{\bf G}_{--}^{(x,y)}(s)&{\bf H}_{-+}^{(x,y)}(s)&{\bf O}_{--}
	\end{array}
	\right]
}
\\
&=&
\Vast\{
\frac{\partial}{\partial\btheta}
	\left[
	\begin{array}{cccc}
 {\bf 0}_{++}&
\bPsi(s;\btheta)e^{{\bf D}(s;\btheta)x}&
e^{{\bf U}(s;\btheta)(y-x)}&
{\bf 0}_{+-}
\\
{\bf 0}_{-+}&
e^{{\bf D}(s;\btheta)x}&
\bXi(s;\btheta)e^{{\bf U}(s;\btheta)(y-x)}&
{\bf 0}_{--}
	\end{array}
	\right]	
    \\
    &&
-
\left[
	\begin{array}{cccc}
		{\bf O}_{++}&{\bf G}_{+-}^{(x,y)}(s)&
  {\bf H}_{++}^{(x,y)}(s)&{\bf O}_{+-}\\
		{\bf O}_{-+}&{\bf G}_{--}^{(x,y)}(s)&{\bf H}_{-+}^{(x,y)}(s)&{\bf O}_{--}
	\end{array}
	\right]
\left(
\frac{\partial}{\partial\btheta}
\left[
	\begin{array}{cccc}
{\bf I}&{\bf 0}&
e^{{\bf U}(s;\btheta)y}&
{\bf 0}_{+-}
\\
{\bf 0}&{\bf I}
&
\bXi(s)e^{{\bf U}(s;\btheta)y}
&{\bf 0}_{--}
\\
{\bf 0}_{++}&
\bPsi(s;\btheta)e^{{\bf D}(s;\btheta)y}&
{\bf I}&{\bf 0}
\\
{\bf 0}_{-+}&e^{{\bf D}(s;\btheta)y}&
{\bf 0}&{\bf I}
	\end{array}
	\right]
\right)
\Vast\}
\\
&&
\times
\left(
{\bf I}
\otimes
\left[
	\begin{array}{cccc}
{\bf I}&{\bf 0}&
e^{{\bf U}(s;\btheta)y}&
{\bf 0}_{+-}
\\
{\bf 0}&{\bf I}
&
\bXi(s)e^{{\bf U}(s;\btheta)y}
&{\bf 0}_{--}
\\
{\bf 0}_{++}&
\bPsi(s;\btheta)e^{{\bf D}(s;\btheta)y}&
{\bf I}&{\bf 0}
\\
{\bf 0}_{-+}&e^{{\bf D}(s;\btheta)y}&
{\bf 0}&{\bf I}
	\end{array}
	\right]
\right)^{-1}.
\end{eqnarray*}

\end{Lemma}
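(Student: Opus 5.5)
Denote by ${\bf M}(\btheta)$ the $2\times 4$ block matrix $[{\bf G}^{(x,y)}\ {\bf H}^{(x,y)}]$ on the left-hand side, by ${\bf N}(\btheta)$ the $2\times4$ block matrix containing $\bPsi e^{{\bf D}x}$, $e^{{\bf U}(y-x)}$, $e^{{\bf D}x}$ and $\bXi e^{{\bf U}(y-x)}$, and by ${\bf W}(\btheta)$ the $4\times 4$ block matrix containing the level-$y$ terms. The plan is to avoid differentiating the inverse ${\bf W}^{-1}$ explicitly. Instead, we differentiate the linear relation that defines ${\bf M}$ and then solve for the derivative.

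First, the representation of Bean, O'Reilly and Taylor~\cite{BOT05} used in the proof of Lemma~\ref{lem:GHder} gives ${\bf M}(\btheta)={\bf N}(\btheta){\bf W}(\btheta)^{-1}$. Multiplying on the right by ${\bf W}$ yields
\begin{equation*}
{\bf M}(\btheta)\,{\bf W}(\btheta)={\bf N}(\btheta).
\end{equation*}
Applying the product rule~\eqref{eq:mproduct} to the left side then gives
\begin{equation*}
\frac{\partial {\bf M}}{\partial\btheta}\left({\bf I}\otimes{\bf W}\right)+{\bf M}\frac{\partial {\bf W}}{\partial\btheta}=\frac{\partial {\bf N}}{\partial\btheta}.
\end{equation*}

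Next, rearrange to get
\begin{equation*}
\frac{\partial {\bf M}}{\partial\btheta}\left({\bf I}\otimes{\bf W}\right)=\frac{\partial {\bf N}}{\partial\btheta}-{\bf M}\frac{\partial {\bf W}}{\partial\btheta}.
\end{equation*}
Since ${\bf W}$ is invertible (this is the invertibility assumed in the representation from~\cite{BOT05}), ${\bf I}\otimes{\bf W}$ is invertible with inverse ${\bf I}\otimes{\bf W}^{-1}=({\bf I}\otimes{\bf W})^{-1}$. Multiplying on the right by this inverse gives exactly the displayed formula.

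The blockwise derivatives $\partial{\bf N}/\partial\btheta$ and $\partial{\bf W}/\partial\btheta$ are computed as in Lemma~\ref{lem:GHder}. We use the product rule for $\bPsi e^{{\bf D}y}$ and $\bXi e^{{\bf U}y}$, Theorems~\ref{th:PsiDer} and~\ref{th:KsiDer} for $\partial\bPsi/\partial\btheta$ and $\partial\bXi/\partial\btheta$, and the Laplace-transform device~\eqref{eq:exptrick} for the derivatives of the matrix exponentials.

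The step I expect to be the main obstacle is the bookkeeping of the Kronecker-type layout that the paper's calculus convention imposes on $\partial/\partial\btheta$ of a matrix. One has to check two things:
\begin{itemize}
\item that $\frac{\partial}{\partial\btheta}({\bf M}{\bf W})$ really takes the form $\frac{\partial{\bf M}}{\partial\btheta}({\bf I}\otimes{\bf W})+{\bf M}\frac{\partial{\bf W}}{\partial\btheta}$, with the identity of size matching the dimension of $\btheta$;
\item that right multiplication by $({\bf I}\otimes{\bf W})^{-1}$ is compatible with the stacked layout, i.e.\ that $({\bf I}\otimes{\bf W})({\bf I}\otimes{\bf W}^{-1})={\bf I}$ holds blockwise for each component $\theta_k$.
\end{itemize}
I would handle this by verifying the identity componentwise in $\theta_k$, where all objects are ordinary matrices and the Kronecker factor just repeats ${\bf W}$ along the block diagonal, and then reassembling the components. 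As a consistency check, substituting~\eqref{eq:minverse} for $\partial {\bf W}^{-1}/\partial\btheta$ into Lemma~\ref{lem:GHder} should recover the same expression.
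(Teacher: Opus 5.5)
Your proposal is correct and is essentially the paper's proof. The paper likewise starts from the identity ${\bf N}={\bf M}{\bf W}$ of Bean, O'Reilly and Taylor, differentiates it with the product rule~\eqref{eq:mproduct}, and solves for $\partial{\bf M}/\partial\btheta$ by right-multiplying by $({\bf I}\otimes{\bf W})^{-1}$. Your componentwise check of the Kronecker layout, and the cross-check against Lemma~\ref{lem:GHder} via~\eqref{eq:minverse}, are just more explicit versions of steps the paper leaves implicit.
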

{\bf Proof:} By Bean, O'Reilly and Taylor~\cite{BOT05}, for any $0<x<y$,
\begin{eqnarray*}
\lefteqn{
	\left[
	\begin{array}{cccc}
 {\bf 0}_{++}&
\bPsi(s;\btheta)e^{{\bf D}(s;\btheta)x}&
e^{{\bf U}(s;\btheta)(y-x)}&
{\bf 0}_{+-}
\\
{\bf 0}_{-+}&
e^{{\bf D}(s;\btheta)x}&
\bXi(s;\btheta)e^{{\bf U}(s;\btheta)(y-x)}&
{\bf 0}_{--}
	\end{array}
	\right]	
}
 \\
 &=&
\left[
	\begin{array}{cccc}
		{\bf O}_{++}&{\bf G}_{+-}^{(x,y)}(s)&
  {\bf H}_{++}^{(x,y)}(s)&{\bf O}_{+-}\\
		{\bf O}_{-+}&{\bf G}_{--}^{(x,y)}(s)&{\bf H}_{-+}^{(x,y)}(s)&{\bf O}_{--}
	\end{array}
	\right]
\left[
	\begin{array}{cccc}
{\bf I}&{\bf 0}&
e^{{\bf U}(s;\btheta)y}&
{\bf 0}_{+-}
\\
{\bf 0}&{\bf I}
&
\bXi(s)e^{{\bf U}(s;\btheta)y}
&{\bf 0}_{--}
\\
{\bf 0}_{++}&
\bPsi(s;\btheta)e^{{\bf D}(s;\btheta)y}&
{\bf I}&{\bf 0}
\\
{\bf 0}_{-+}&e^{{\bf D}(s;\btheta)y}&
{\bf 0}&{\bf I}
	\end{array}
	\right]
\end{eqnarray*}
and so,
\begin{eqnarray*}
\lefteqn{
\frac{\partial}{\partial\btheta}
	\left[
	\begin{array}{cccc}
 {\bf 0}_{++}&
\bPsi(s;\btheta)e^{{\bf D}(s;\btheta)x}&
e^{{\bf U}(s;\btheta)(y-x)}&
{\bf 0}_{+-}
\\
{\bf 0}_{-+}&
e^{{\bf D}(s;\btheta)x}&
\bXi(s;\btheta)e^{{\bf U}(s;\btheta)(y-x)}&
{\bf 0}_{--}
	\end{array}
	\right]	
}
\\
&=&
\frac{\partial}{\partial\btheta}
\left[
	\begin{array}{cccc}
		{\bf O}_{++}&{\bf G}_{+-}^{(x,y)}(s)&
  {\bf H}_{++}^{(x,y)}(s)&{\bf O}_{+-}\\
		{\bf O}_{-+}&{\bf G}_{--}^{(x,y)}(s)&{\bf H}_{-+}^{(x,y)}(s)&{\bf O}_{--}
	\end{array}
	\right]
\left(
{\bf I}
\otimes
\left[
	\begin{array}{cccc}
{\bf I}&{\bf 0}&
e^{{\bf U}(s;\btheta)y}&
{\bf 0}_{+-}
\\
{\bf 0}&{\bf I}
&
\bXi(s)e^{{\bf U}(s;\btheta)y}
&{\bf 0}_{--}
\\
{\bf 0}_{++}&
\bPsi(s;\btheta)e^{{\bf D}(s;\btheta)y}&
{\bf I}&{\bf 0}
\\
{\bf 0}_{-+}&e^{{\bf D}(s;\btheta)y}&
{\bf 0}&{\bf I}
	\end{array}
	\right]
\right)
\\
&&+
\left[
	\begin{array}{cccc}
		{\bf O}_{++}&{\bf G}_{+-}^{(x,y)}(s)&
  {\bf H}_{++}^{(x,y)}(s)&{\bf O}_{+-}\\
		{\bf O}_{-+}&{\bf G}_{--}^{(x,y)}(s)&{\bf H}_{-+}^{(x,y)}(s)&{\bf O}_{--}
	\end{array}
	\right]
\left(
\frac{\partial}{\partial\btheta}
\left[
	\begin{array}{cccc}
{\bf I}&{\bf 0}&
e^{{\bf U}(s;\btheta)y}&
{\bf 0}_{+-}
\\
{\bf 0}&{\bf I}
&
\bXi(s)e^{{\bf U}(s;\btheta)y}
&{\bf 0}_{--}
\\
{\bf 0}_{++}&
\bPsi(s;\btheta)e^{{\bf D}(s;\btheta)y}&
{\bf I}&{\bf 0}
\\
{\bf 0}_{-+}&e^{{\bf D}(s;\btheta)y}&
{\bf 0}&{\bf I}
	\end{array}
	\right]
\right)
,
\end{eqnarray*}
and the result follows. \rule{9pt}{9pt}

\begin{Theorem}\label{th:der_LT_transient}
We have,
\begin{eqnarray*}
\lefteqn{
\frac{\partial}{\partial\btheta}
\widetilde {\bf p}(s;\btheta)
=
{\bf g}_-\times
\Bigg(
\frac{\partial}{\partial\btheta}
e^{
{\bf D}(s;\btheta)
z}
\Bigg)
}
\nonumber\\
&&
\times
{\bf I}\otimes
\left(
{\bf I}
-
\left[
\begin{array}{cc}
{\bf I}&{\bf 0}
\end{array}
\right]
\left(-
{\bf T}_{\ominus\ominus}(\btheta)
+s{\bf I}
\right)^{-1}
{\bf T}_{\ominus +}(\btheta)
\bPsi(s;\btheta)
\right)^{-1}
\left[
\begin{array}{cc}
{\bf I}&{\bf 0}
\end{array}
\right]
\left(-
{\bf T}_{\ominus\ominus}(\btheta)
+s{\bf I}
\right)^{-1}
\nonumber\\
&&
+
{\bf g}_-\times
e^{
{\bf D}(s;\btheta)
z}
\nonumber\\
&&
\times
\Bigg\{
\quad
\left(
\frac{\partial}{\partial\btheta}
\left(
{\bf I}
-
\left[
\begin{array}{cc}
{\bf I}&{\bf 0}
\end{array}
\right]
\left(-
{\bf T}_{\ominus\ominus}(\btheta)
+s{\bf I}
\right)^{-1}
{\bf T}_{\ominus +}(\btheta)
\bPsi(s;\btheta)
\right)^{-1}
\right)
\left( {\bf I}\otimes
\left[
\begin{array}{cc}
{\bf I}&{\bf 0}
\end{array}
\right]
\left(-
{\bf T}_{\ominus\ominus}(\btheta)
+s{\bf I}
\right)^{-1}
\right)
\nonumber\\
\nonumber\\
&&
\quad \quad
+
\left(
{\bf I}
-
\left[
\begin{array}{cc}
{\bf I}&{\bf 0}
\end{array}
\right]
\left(-
{\bf T}_{\ominus\ominus}(\btheta)
+s{\bf I}
\right)^{-1}
{\bf T}_{\ominus +}(\btheta)
\bPsi(s;\btheta)
\right)^{-1}
\left[
\begin{array}{cc}
{\bf I}&{\bf 0}
\end{array}
\right]
\left(
\frac{\partial}{\partial\btheta}
\left(-
{\bf T}_{\ominus\ominus}(\btheta)
+s{\bf I}
\right)^{-1}
\right)
\quad
\Bigg\}
,
\nonumber\\
\end{eqnarray*}
where
\begin{eqnarray*}
\lefteqn{
\quad
\frac{\partial}{\partial\btheta}
\left(
{\bf I}
-
\left[
\begin{array}{cc}
{\bf I}&{\bf 0}
\end{array}
\right]
\left(-
{\bf T}_{\ominus\ominus}(\btheta)
+s{\bf I}
\right)^{-1}
{\bf T}_{\ominus +}(\btheta)
\bPsi(s;\btheta)
\right)^{-1}
}
\nonumber
\\
&=&
\left(
{\bf I}
-
\left[
\begin{array}{cc}
{\bf I}&{\bf 0}
\end{array}
\right]
\left(-
{\bf T}_{\ominus\ominus}(\btheta)
+s{\bf I}
\right)^{-1}
{\bf T}_{\ominus +}(\btheta)
\bPsi(s;\btheta)
\right)^{-1}
\left[
\begin{array}{cc}
{\bf I}&{\bf 0}
\end{array}
\right]
\nonumber
\\
&&
\times
\Bigg(
\quad
\left(
\frac{\partial}{\partial\btheta}
\left(-
{\bf T}_{\ominus\ominus}(\btheta)
+s{\bf I}
\right)^{-1}
\right)
\left(
{\bf I}\otimes
{\bf T}_{\ominus +}(\btheta)
\bPsi(s;\btheta)
\right)
\nonumber
\\
&&
\quad \quad
+
\left(-
{\bf T}_{\ominus\ominus}(\btheta)
+s{\bf I}
\right)^{-1}
\left(
\left(
\frac{\partial}{\partial\btheta}
{\bf T}_{\ominus +}(\btheta)
\right)
\left(
{\bf I}\otimes
\bPsi(s;\btheta)
\right)
+
{\bf T}_{\ominus +}(\btheta)
\left(
\frac{\partial}{\partial\btheta}
\bPsi(s;\btheta)
\right)
\right)
\quad
\Bigg)
\nonumber
\\
&&
\times
\left(
{\bf I}\otimes
\left(
{\bf I}
-
\left[
\begin{array}{cc}
{\bf I}&{\bf 0}
\end{array}
\right]
\left(-
{\bf T}_{\ominus\ominus}(\btheta)
+s{\bf I}
\right)^{-1}
{\bf T}_{\ominus +}(\btheta)
\bPsi(s;\btheta)
\right)^{-1}
\right)
\end{eqnarray*}
and
\begin{eqnarray*}
\frac{\partial} { \partial\btheta}
(-{\bf T}_{00}(\btheta)+s{\bf I})^{-1}
&=&
(-{\bf T}_{00}(\btheta)+s{\bf I})^{-1}
\frac{\partial {\bf T}_{00}(\btheta)}{\partial \btheta}
\left(
{\bf I}
\otimes (-{{\bf T}_{00}(\btheta)}+s{\bf I})^{-1}
\right)
.
\end{eqnarray*}
Further,
\begin{eqnarray*}
\lefteqn{
\frac{\partial}{\partial\btheta}
\widetilde {\bf f}_+(x,s;\btheta)
=
\frac{\partial}{\partial\btheta}
\left(
\widetilde {\bf p}(s;\btheta)
{\bf T}_{\ominus +}(\btheta)
e^{{\bf K}(s;\btheta)x}
({\bf C}_+(\btheta))^{-1}
\right)
}
\nonumber\\
&&
\quad
+
1\{x\leq z\}\times
{\bf g}_-\times
\frac{\partial}{\partial\btheta}
\left(
e^{{\bf D}(s;\btheta)(z-x)}
\left(
{\bf I}-
{\bf H}_{-+}^{(x,x)}(s;\btheta)
\bPsi(s;\btheta)
\right)^{-1}
{\bf H}_{-+}^{(x,x)}(s;\btheta)
({\bf C}_+(\btheta))^{-1}
\right)
\nonumber\\
&&
\quad
+
1\{x>z\}\times
{\bf g}_-\times
\frac{\partial}{\partial\btheta}
\left(
{\bf H}_{-+}^{(z,z)}(s;\btheta)
{\bf H}_{++}^{(z,x)}(s;\btheta)
\left(
{\bf I}-\bPsi(s;\btheta){\bf H}_{-+}^{(x,x)}(s;\btheta)
\right)^{-1}
({\bf C}_+(\btheta))^{-1}
\right)
\nonumber\\
\end{eqnarray*}
and
\begin{eqnarray*}
\lefteqn{
\frac{\partial}{\partial\btheta}
\widetilde {\bf f}_-(x,s;\btheta)
=
\frac{\partial}{\partial\btheta}
\left(
\widetilde {\bf p}(s;\btheta)
{\bf T}_{\ominus +}(\btheta)
e^{{\bf K}(s;\btheta)x}
\bPsi(s;\btheta)
|{\bf C}_-(\btheta)|^{-1}
\right)
}
\nonumber\\
&&
\quad
+
1\{x\leq z\}\times
{\bf g}_-\times
\frac{\partial}{\partial\btheta}
\left(
e^{{\bf D}(s;\btheta)(z-x)}
\left(
{\bf I}-{\bf H}_{-+}^{(z,z)}(s;\btheta)\bPsi(s;\btheta)
\right)^{-1}
|{\bf C}_-(\btheta)|^{-1}
\right)
\nonumber\\
&&
\quad
+
1\{x>z\}\times
{\bf g}_-\times
\frac{\partial}{\partial\btheta}
\left(
{\bf H}_{-+}^{(z,z)}(s;\btheta)
{\bf H}_{++}^{(z,x)}(s;\btheta)
\left(
{\bf I}-
\bPsi(s;\btheta)
{\bf H}_{-+}^{(x,x)}(s;\btheta)
\right)^{-1}
\bPsi(s;\btheta)
|{\bf C}_-(\btheta)|^{-1}
\right)
\nonumber\\
\end{eqnarray*}
and
\begin{eqnarray*}
\lefteqn{
\frac{\partial}{\partial\btheta}
\widetilde {\bf f}_0(x,s;\btheta)
\Bigg(
\frac{\partial}{\partial\btheta}
\left[
\begin{array}{cc}
\widetilde {\bf f}_+(x,s;\btheta)
&
\widetilde {\bf f}_-(x,s;\btheta)
\end{array}
\right]
\Bigg)
\Bigg(
{\bf I}\otimes
{\bf T}_{\pm 0}
(-{\bf T}_{00}+s{\bf I})^{-1}
\Bigg)
}
\nonumber
\\
&&
+
\left[
\begin{array}{cc}
\widetilde {\bf f}_+(x,s;\btheta)
&
\widetilde {\bf f}_-(x,s;\btheta)
\end{array}
\right]
\Bigg(
\left(
\frac{\partial}{\partial\btheta}
{\bf T}_{\pm 0}
\right)
\left(
{\bf I}\otimes
(-{\bf T}_{00}+s{\bf I})^{-1}
\right)
+
\left(
\frac{\partial}{\partial\btheta}
(-{\bf T}_{00}+s{\bf I})^{-1}
\right)
\Bigg),
\nonumber\\
\end{eqnarray*}
where
\begin{eqnarray*}
\lefteqn{
\frac{\partial}{\partial\btheta}
\left(
\widetilde {\bf p}(s;\btheta)
{\bf T}_{\ominus +}(\btheta)
e^{{\bf K}(s;\btheta)x}
\bPsi(s;\btheta)
|{\bf C}_-(\btheta)|^{-1}
\right)
}
\nonumber
\\
&=&
\left(
\frac{\partial \widetilde {\bf p}(s;\btheta)}{\partial\btheta}
\left(
{\bf I}
\otimes
{\bf T}_{\ominus +}(\btheta)
\right)
+
 \widetilde {\bf p}(s;\btheta)
\frac{\partial
{\bf T}_{\ominus +}(\btheta)
}
{\partial\btheta}
\right)
\left(
{\bf I}\otimes
{\bf T}_{\ominus +}(\btheta)
e^{{\bf K}(s;\btheta)x}
\bPsi(s;\btheta)
|{\bf C}_-(\btheta)|^{-1}
\right)
\nonumber
\\
&&
+
\widetilde {\bf p}(s;\btheta)
{\bf T}_{\ominus +}(\btheta)
\Bigg\{
\left(
\frac{\partial}{\partial\btheta}
e^{{\bf K}(s;\btheta)x}
\right)
\left(
{\bf I}\otimes
\bPsi(s;\btheta)
|{\bf C}_-(\btheta)|^{-1}
\right)
\nonumber
\\
&&
\quad
+
e^{{\bf K}(s;\btheta)x}
\left(
\left(
\frac{\partial}{\partial\btheta}
\bPsi(s;\btheta)
\right)
\left(
{\bf I}\otimes
|{\bf C}_-(\btheta)|^{-1}
\right)
+
\bPsi(s;\btheta)
\left(
\frac{\partial}{\partial\btheta}
|{\bf C}_-(\btheta)|^{-1}
\right)
\right)
\Bigg\}
,
\end{eqnarray*}
and
\begin{eqnarray*}
\lefteqn{
\frac{\partial}{\partial\btheta}
\left(
e^{{\bf D}(s;\btheta)(z-x)}
\left(
{\bf I}-
{\bf H}_{-+}^{(x,x)}(s;\btheta)
\bPsi(s;\btheta)
\right)^{-1}
{\bf H}_{-+}^{(x,x)}(s;\btheta)
({\bf C}_+(\btheta))^{-1}
\right)
}
\nonumber\\
&=&
\left(
\frac{\partial}{\partial\btheta}
e^{{\bf D}(s;\btheta)(z-x)}
\right)
\left(
{\bf I}\oplus
\left(
{\bf I}-
{\bf H}_{-+}^{(x,x)}(s;\btheta)
\bPsi(s;\btheta)
\right)^{-1}
{\bf H}_{-+}^{(x,x)}(s;\btheta)
({\bf C}_+(\btheta))^{-1}
\right)
\nonumber\\
&&
+
e^{{\bf D}(s;\btheta)(z-x)}
\left(
\frac{\partial}{\partial\btheta}
\left(
{\bf I}-
{\bf H}_{-+}^{(x,x)}(s;\btheta)
\bPsi(s;\btheta)
\right)^{-1}
\right)
\left(
{\bf I}\oplus
{\bf H}_{-+}^{(x,x)}(s;\btheta)
({\bf C}_+(\btheta))^{-1}
\right)
\nonumber\\
&&
+
e^{{\bf D}(s;\btheta)(z-x)}
\left(
{\bf I}-
{\bf H}_{-+}^{(x,x)}(s;\btheta)
\bPsi(s;\btheta)
\right)^{-1}
\nonumber\\
&&
\quad
\times
\left(
\frac{\partial {\bf H}_{-+}^{(x,x)}(s;\btheta)}{\partial\btheta}
\left(
{\bf I}\oplus
({\bf C}_+(\btheta))^{-1}
\right)
+
{\bf H}_{-+}^{(x,x)}(s;\btheta)
\frac{\partial
({\bf C}_+(\btheta))^{-1}
}{\partial\btheta}
\right)
,
\end{eqnarray*}
and
\begin{eqnarray*}
\lefteqn{
\frac{\partial}{\partial\btheta}
\left(
{\bf H}_{-+}^{(z,z)}(s;\btheta)
{\bf H}_{++}^{(z,x)}(s;\btheta)
\left(
{\bf I}-\bPsi(s;\btheta){\bf H}_{-+}^{(x,x)}(s;\btheta)
\right)^{-1}
({\bf C}_+(\btheta))^{-1}
\right)
}
\nonumber\\
&=&
\left(
\frac{\partial}{\partial\btheta}
{\bf H}_{-+}^{(z,z)}(s;\btheta)
{\bf H}_{++}^{(z,x)}(s;\btheta)
\right)
\left(
{\bf I}\oplus
\left(
{\bf I}-
\bPsi(s;\btheta)
{\bf H}_{-+}^{(x,x)}(s;\btheta)
\right)^{-1}
\bPsi(s;\btheta)
|{\bf C}_-(\btheta)|^{-1}
\right)
\nonumber\\
&&
+
{\bf H}_{-+}^{(z,z)}(s;\btheta)
{\bf H}_{++}^{(z,x)}(s;\btheta)
\left(
\frac{\partial}{\partial\btheta}
\left(
{\bf I}-
\bPsi(s;\btheta)
{\bf H}_{-+}^{(x,x)}(s;\btheta)
\right)^{-1}
\right)
\left(
{\bf I}\oplus
\bPsi(s;\btheta)
|{\bf C}_-(\btheta)|^{-1}
\right)
\nonumber\\
&&
+
{\bf H}_{-+}^{(z,z)}(s;\btheta)
{\bf H}_{++}^{(z,x)}(s;\btheta)
\left(
{\bf I}-
\bPsi(s;\btheta)
{\bf H}_{-+}^{(x,x)}(s;\btheta)
\right)^{-1}
\nonumber\\
&&
\quad
\times
\left(
\frac{\partial
\bPsi(s;\btheta)
}{\partial\btheta}
\left(
{\bf I}\oplus
|{\bf C}_-(\btheta)|^{-1}
\right)
+
\bPsi(s;\btheta)
\frac{\partial
|{\bf C}_-(\btheta)|^{-1}
}{\partial\btheta}
\right)
,
\end{eqnarray*}
and
\begin{eqnarray*}
\lefteqn{
\frac{\partial}{\partial\btheta}
\left(
e^{{\bf D}(s;\btheta)(z-x)}
\left(
{\bf I}-{\bf H}_{-+}^{(z,z)}(s;\btheta)\bPsi(s;\btheta)
\right)^{-1}
|{\bf C}_-(\btheta)|^{-1}
\right)
}
\nonumber\\
&=&
\left(
\frac{\partial}{\partial\btheta}
e^{{\bf D}(s;\btheta)(z-x)}
\right)
\left(
{\bf I}\oplus
\left(
{\bf I}-
{\bf H}_{-+}^{(z,z)}(s;\btheta)
\bPsi(s;\btheta)
\right)^{-1}
|{\bf C}_-(\btheta)|^{-1}
\right)
\nonumber\\
&&
+
e^{{\bf D}(s;\btheta)(z-x)}
\left(
\frac{\partial}{\partial\btheta}
\left(
{\bf I}-
{\bf H}_{-+}^{(z,z)}(s;\btheta)
\bPsi(s;\btheta)
\right)^{-1}
\right)
\left(
{\bf I}\oplus
|{\bf C}_-(\btheta)|^{-1}
\right)
\nonumber\\
&&
+
e^{{\bf D}(s;\btheta)(z-x)}
\left(
{\bf I}-
{\bf H}_{-+}^{(z,z)}(s;\btheta)
\bPsi(s;\btheta)
\right)^{-1}
\frac{\partial}{\partial\btheta}
\left(
|{\bf C}_-(\btheta)|^{-1}
\right)
,
\end{eqnarray*}
and
\begin{eqnarray*}
\lefteqn{
\frac{\partial}{\partial\btheta}
\left(
{\bf H}_{-+}^{(z,z)}(s;\btheta)
{\bf H}_{++}^{(z,x)}(s;\btheta)
\left(
{\bf I}-
\bPsi(s;\btheta)
{\bf H}_{-+}^{(x,x)}(s;\btheta)
\right)^{-1}
\bPsi(s;\btheta)
|{\bf C}_-(\btheta)|^{-1}
\right)
}
\nonumber\\
&=&
\left(
\frac{\partial}{\partial\btheta}
{\bf H}_{-+}^{(z,z)}(s;\btheta)
{\bf H}_{++}^{(z,x)}(s;\btheta)
\right)
\left(
{\bf I}\oplus
\left(
{\bf I}-
\bPsi(s;\btheta)
{\bf H}_{-+}^{(x,x)}(s;\btheta)
\right)^{-1}
\bPsi(s;\btheta)
|{\bf C}_-(\btheta)|^{-1}
\right)
\nonumber\\
&&
+
{\bf H}_{-+}^{(z,z)}(s;\btheta)
{\bf H}_{++}^{(z,x)}(s;\btheta)
\left(
\frac{\partial}{\partial\btheta}
\left(
{\bf I}-
\bPsi(s;\btheta)
{\bf H}_{-+}^{(x,x)}(s;\btheta)
\right)^{-1}
\right)
\left(
{\bf I}\oplus
\bPsi(s;\btheta)
|{\bf C}_-(\btheta)|^{-1}
\right)
\nonumber\\
&&
+
{\bf H}_{-+}^{(z,z)}(s;\btheta)
{\bf H}_{++}^{(z,x)}(s;\btheta)
\left(
{\bf I}-
\bPsi(s;\btheta)
{\bf H}_{-+}^{(x,x)}(s;\btheta)
\right)^{-1}
\frac{\partial}{\partial\btheta}
\left(
\bPsi(s;\btheta)
|{\bf C}_-(\btheta)|^{-1}
\right)
.
\end{eqnarray*}
with
\begin{eqnarray*}
\lefteqn{
\frac{\partial}{\partial\btheta}
\left(
{\bf I}-
{\bf H}_{-+}^{(x,x)}(s;\btheta)
\bPsi(s;\btheta)
\right)^{-1}
=
-
\left(
{\bf I}-
{\bf H}_{-+}^{(x,x)}(s;\btheta)
\bPsi(s;\btheta)
\right)^{-1}
}
\nonumber\\
&&
\times
\left(
\frac{\partial {\bf H}_{-+}^{(x,x)}(s;\btheta)}
{\partial \btheta}
\left(
{\bf I}\oplus
\bPsi(s;\btheta)
\right)
+
{\bf H}_{-+}^{(x,x)}(s;\btheta)
\frac{\partial \bPsi(s;\btheta)}
{\partial \btheta}
\right)
\left(
{\bf I}
\otimes
\left(
{\bf I}-
{\bf H}_{-+}^{(x,x)}(s;\btheta)
\bPsi(s;\btheta)
\right)^{-1}
\right)
\nonumber
\\
\lefteqn{
\frac{\partial}{\partial\btheta}
\left(
{\bf I}-
\bPsi(s;\btheta)
{\bf H}_{-+}^{(x,x)}(s;\btheta)
\right)^{-1}
=
-
\left(
{\bf I}-
\bPsi(s;\btheta)
{\bf H}_{-+}^{(x,x)}(s;\btheta)
\right)^{-1}
}
\nonumber\\
&&
\times
\left(
\frac{\partial
\bPsi(s;\btheta)
}
{\partial \btheta}
\left(
{\bf I}\oplus
{\bf H}_{-+}^{(x,x)}(s;\btheta)
\right)
+
\bPsi(s;\btheta)
\frac{\partial
{\bf H}_{-+}^{(x,x)}(s;\btheta)
}
{\partial \btheta}
\right)
\left(
{\bf I}
\otimes
\left(
{\bf I}-
\bPsi(s;\btheta)
{\bf H}_{-+}^{(x,x)}(s;\btheta)
\right)^{-1}
\right)
.
\nonumber
\\
\end{eqnarray*}

\end{Theorem}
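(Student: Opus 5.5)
The plan is to follow the same route as the stationary result. First I will write down closed-form Laplace-transform (in $t$) expressions for $\widetilde{\bf p}(s;\btheta)$ and $\widetilde{\bf f}(x,s;\btheta)$ taken from the existing transient theory of Bean, O'Reilly and Taylor~\cite{BOT05}. Then I will differentiate these expressions term by term using the matrix calculus rules \eqref{eq:mproduct}--\eqref{eq:minverse}. Because every expression is a finite product of matrices, inverses and matrix exponentials, the derivative is entirely mechanical once the base formulas are fixed. Differentiation under the integral in $t$ is justified for $Re(s)>0$ by dominated convergence, since all densities are bounded by probabilities and $e^{-st}$ is integrable. The same argument gives the interchange $\mathcal{L}_{\partial{\bf p}}=\frac{\partial}{\partial\btheta}\widetilde{\bf p}$ already noted in the text.

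\textbf{Step 1 (mass at zero).} The process starts at level $z$ in a phase drawn from ${\bf g}_-$.
\begin{itemize}
\item It first reaches level $0$ with transform ${\bf g}_-e^{{\bf D}(s;\btheta)z}$. This is the $y\to\infty$ form of ${\bf G}^{(z)}$ in Lemma~\ref{lem:GHder}, restricted to starting in $\mathcal{S}_-$.
\item At level $0$ it sojourns in $\mathcal{S}_-\cup\mathcal{S}_0$ with discounted occupation kernel $(-{\bf T}_{\ominus\ominus}+s{\bf I})^{-1}$.
\item It then leaves via ${\bf T}_{\ominus+}$ and returns to $0$ in $\mathcal{S}_-$ through $\bPsi(s;\btheta)$.
\end{itemize}
Summing the geometric series over returns gives
\[\widetilde{\bf p}(s;\btheta)={\bf g}_-e^{{\bf D}(s;\btheta)z}\Big({\bf I}-[{\bf I}\ {\bf 0}](-{\bf T}_{\ominus\ominus}+s{\bf I})^{-1}{\bf T}_{\ominus+}\bPsi(s;\btheta)\Big)^{-1}[{\bf I}\ {\bf 0}](-{\bf T}_{\ominus\ominus}+s{\bf I})^{-1}.\]
Applying \eqref{eq:mproduct} twice to this three-factor product gives the stated formula. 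The inner derivative of the inverse comes from \eqref{eq:minverse} combined with \eqref{eq:mproduct} on the product $(-{\bf T}_{\ominus\ominus}+s{\bf I})^{-1}{\bf T}_{\ominus+}\bPsi$. For the derivative of $\bPsi$ I will invoke Theorem~\ref{th:PsiDer}, and for the derivative of $e^{{\bf D}z}$ I will use the Laplace-inversion device \eqref{eq:exptrick}.

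\textbf{Step 2 (densities).} I will decompose sample paths by their last visit to level $0$ before time $t$, or by the absence of any such visit.
\begin{itemize}
\item Paths that have visited $0$ contribute $\widetilde{\bf p}\,{\bf T}_{\ominus+}e^{{\bf K}(s)x}[({\bf C}_+)^{-1}\ \bPsi|{\bf C}_-|^{-1}]$. This mirrors the stationary formula, with $s$ now included in ${\bf K}$.
\item For paths that have not yet hit $0$, I split into $x\le z$ and $x>z$.
	\begin{itemize}
	\item For $x\le z$, the process descends to level $x$ via $e^{{\bf D}(z-x)}$. It then makes repeated excursions above $x$, each returning with ${\bf H}^{(x,x)}_{-+}\bPsi$. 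This produces the factors $({\bf I}-{\bf H}_{-+}\bPsi)^{-1}$, followed by the appropriate ${\bf C}^{-1}$ factor that converts crossing rate into density.
	\item For $x>z$, the process first goes up from $z$ and then reaches $x$ via ${\bf H}^{(z,x)}_{++}$, after which the same renewal argument applies.
	\end{itemize}
\end{itemize}
I will take these formulas as given from \cite{BOT05}. Each term is then differentiated by the product rule, with the derivatives of ${\bf H}^{(\cdot,\cdot)}$ supplied by Lemma~\ref{lem:GHder} in the limit $y\downarrow x$. Finally, $\widetilde{\bf f}_0=[\widetilde{\bf f}_+\ \widetilde{\bf f}_-]{\bf T}_{\pm0}(-{\bf T}_{00}+s{\bf I})^{-1}$, because the phases in $\mathcal{S}_0$ are entered from $\mathcal{S}_\pm$ at fixed level. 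One more product rule then finishes the proof.

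\textbf{Main obstacle.} The differentiation itself is routine. The main difficulty is correctly identifying the base transient formulas, especially the renewal structure at an interior level $x$ and the distinction between ${\bf H}^{(x,x)}$ as a limit and ${\bf H}^{(x)}$ for the unbounded model. I also need to check the invertibility of $({\bf I}-{\bf H}_{-+}\bPsi)$ and $({\bf I}-[{\bf I}\ {\bf 0}](\cdot)^{-1}{\bf T}_{\ominus+}\bPsi)$ for $Re(s)>0$. I would get invertibility from the fact that these products are substochastic transforms with spectral radius strictly less than $1$ when $Re(s)>0$. That bound also guarantees that the Neumann series defining the inverses converge uniformly in $\btheta$ nearby, which justifies differentiating them.
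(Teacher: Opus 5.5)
Your proposal is correct and follows essentially the same route as the paper, which also writes $\widetilde{\bf p}(s;\btheta)$ and $\widetilde{\bf f}(x,s;\btheta)$ via the same sample-path decomposition and then differentiates mechanically with \eqref{eq:mproduct}--\eqref{eq:minverse}: for $\widetilde{\bf p}$, first passage to $0$, then a geometric number of leave-and-return cycles at $0$, then a final sojourn at $0$; for the densities, visits to level $x$ after leaving $0$ versus visits to $x$ before $0$ is ever hit, built from cycles of ${\bf H}^{(x,x)}_{-+}$ and $\bPsi$. One small slip is in your description of the case $x>z$: the process starts in $\mathcal{S}_-$, so it cannot go up from $z$ directly and must first return to $z$ in $\mathcal{S}_+$ without hitting $0$, which contributes the factor ${\bf H}^{(z,z)}_{-+}$ in front of ${\bf H}^{(z,x)}_{++}$, exactly as in the stated formula.
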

{\bf Proof:} First,
\begin{eqnarray*}
\widetilde {\bf p}(s;\btheta)
&=&
{\bf g}_-\times e^{
{\bf D}(s;\btheta)
z}
\nonumber\\
&&
\quad
\times
\left(
{\bf I}
-
\left[
\begin{array}{cc}
{\bf I}&{\bf 0}
\end{array}
\right]
\left(-
{\bf T}_{\ominus\ominus}(\btheta)
+s{\bf I}
\right)^{-1}
{\bf T}_{\ominus +}(\btheta)
\bPsi(s;\btheta)
\right)^{-1}
\left[
\begin{array}{cc}
{\bf I}&{\bf 0}
\end{array}
\right]
\left(-
{\bf T}_{\ominus\ominus}(\btheta)
+s{\bf I}
\right)^{-1},
\end{eqnarray*}
since assuming start from level $z$, for the process to be observed at level zero, it must
\begin{itemize}
\item first drain to level zero, according to the Laplace transform ${\bf g}_-\times e^{
{\bf D}(s;\btheta)
z}$,
\item next, the process may leave level zero, and then return to it, and do so any number of times including zero, according to the Laplace transform $\left(
{\bf I}
-
\left[
\begin{array}{cc}
{\bf I}&{\bf 0}
\end{array}
\right]
\left(-
{\bf T}_{\ominus\ominus}(\btheta)
+s{\bf I}
\right)^{-1}
{\bf T}_{\ominus +}(\btheta)
\bPsi(s;\btheta)
\right)^{-1}$,
\item and then it must remain at level zero for some time, according the Laplace transform $\left(-
{\bf T}_{\ominus\ominus}(\btheta)
+s{\bf I}
\right)^{-1}$.
\end{itemize}

Next,
\begin{eqnarray*}
\lefteqn{
\widetilde {\bf f}_+(x,s;\btheta)
=
\widetilde {\bf p}(s;\btheta)
{\bf T}_{\ominus +}(\btheta)
e^{{\bf K}(s;\btheta)x}
({\bf C}_+(\btheta))^{-1}
}
\nonumber\\
&&
\quad
+
1\{x\leq z\}\times
{\bf g}_-\times
e^{{\bf D}(s;\btheta)(z-x)}
{\bf H}_{-+}^{(x,x)}(s;\btheta)
\left(
{\bf I}-\bPsi(s;\btheta){\bf H}_{-+}^{(x,x)}(s;\btheta)
\right)^{-1}
({\bf C}_+(\btheta))^{-1}
\nonumber\\
&&
\quad
+
1\{x>z\}\times
{\bf g}_-\times
{\bf H}_{-+}^{(z,z)}(s;\btheta)
{\bf H}_{++}^{(z,x)}(s;\btheta)
\left(
{\bf I}-\bPsi(s;\btheta){\bf H}_{-+}^{(x,x)}(s;\btheta)
\right)^{-1}
({\bf C}_+(\btheta))^{-1}
\\
&=&
\widetilde {\bf p}(s;\btheta)
{\bf T}_{\ominus +}(\btheta)
e^{{\bf K}(s;\btheta)x}
({\bf C}_+(\btheta))^{-1}
\nonumber\\
&&
\quad
+
1\{x\leq z\}\times
{\bf g}_-\times
e^{{\bf D}(s;\btheta)(z-x)}
\left(
{\bf I}-
{\bf H}_{-+}^{(x,x)}(s;\btheta)
\bPsi(s;\btheta)
\right)^{-1}
{\bf H}_{-+}^{(x,x)}(s;\btheta)
({\bf C}_+(\btheta))^{-1}
\nonumber\\
&&
\quad
+
1\{x>z\}\times
{\bf g}_-\times
{\bf H}_{-+}^{(z,z)}(s;\btheta)
{\bf H}_{++}^{(z,x)}(s;\btheta)
\left(
{\bf I}-\bPsi(s;\btheta){\bf H}_{-+}^{(x,x)}(s;\btheta)
\right)^{-1}
({\bf C}_+(\btheta))^{-1}
\\
\end{eqnarray*}
since assuming start from level $z$, for the process to be observed at level $x$, it may
\begin{itemize}
\item visit level $x$ by first visiting level zero, according to the Laplace transform $\widetilde {\bf p}(s;\btheta)
{\bf T}_{\ominus +}(\btheta)
e^{{\bf K}(s;\btheta)x}
({\bf C}_+)^{-1}
$,
\end{itemize}
or it may visit level $x$ without first visiting level zero, in which case, if $x\leq z$,
\begin{itemize}
\item it must first drain to level $z$ and so so in some phase in $\mathcal{S}_-$, according to the Laplace transform $e^{{\bf D}(s;\btheta)(z-x)}
$,
\item then visit level $x$ without visiting level zero and do so in some phase in $\mathcal{S}_+$, according to the Laplace transform ${\bf H}_{-+}^{(z,z)}(s;\btheta)$,
\item and then the process may visit level $x$ in some phase in $\mathcal{S}_+$ any number of times including zero, according to the Laplace transform $\left(
{\bf I}-\bPsi(s;\btheta){\bf H}_{-+}^{(x,x)}(s;\btheta)
\right)^{-1}
({\bf C}_+(\btheta))^{-1}$,
\end{itemize}
with a similar argument in the case $x>z$.

Further, by similar conditioning on the decomposition of the sample path, we have
\begin{eqnarray*}
\lefteqn{
\widetilde {\bf f}_-(x,s;\btheta)
=
\widetilde {\bf p}(s;\btheta)
{\bf T}_{\ominus +}(\btheta)
e^{{\bf K}(s;\btheta)x}
\bPsi(s;\btheta)
|{\bf C}_-(\btheta)|^{-1}
}
\nonumber\\
&&
\quad
+
1\{x\leq z\}\times
{\bf g}_-\times
e^{{\bf D}(s;\btheta)(z-x)}
\left(
{\bf I}-{\bf H}_{-+}^{(z,z)}(s;\btheta)\bPsi(s;\btheta)
\right)^{-1}
|{\bf C}_-(\btheta)|^{-1}
\nonumber\\
&&
\quad
+
1\{x>z\}\times
{\bf g}_-\times
{\bf H}_{-+}^{(z,z)}(s;\btheta)
{\bf H}_{++}^{(z,x)}(s;\btheta)
\bPsi(s;\btheta)
\left(
{\bf I}-
{\bf H}_{-+}^{(x,x)}(s;\btheta)
\bPsi(s;\btheta)
\right)^{-1}
|{\bf C}_-(\btheta)|^{-1}
\\
&=&
\widetilde {\bf p}(s;\btheta)
{\bf T}_{\ominus +}(\btheta)
e^{{\bf K}(s;\btheta)x}
\bPsi(s;\btheta)
|{\bf C}_-(\btheta)|^{-1}
\nonumber\\
&&
\quad
+
1\{x\leq z\}\times
{\bf g}_-\times
e^{{\bf D}(s;\btheta)(z-x)}
\left(
{\bf I}-{\bf H}_{-+}^{(z,z)}(s;\btheta)\bPsi(s;\btheta)
\right)^{-1}
|{\bf C}_-(\btheta)|^{-1}
\nonumber\\
&&
\quad
+
1\{x>z\}\times
{\bf g}_-\times
{\bf H}_{-+}^{(z,z)}(s;\btheta)
{\bf H}_{++}^{(z,x)}(s;\btheta)
\left(
{\bf I}-
\bPsi(s;\btheta)
{\bf H}_{-+}^{(x,x)}(s;\btheta)
\right)^{-1}
\bPsi(s;\btheta)
|{\bf C}_-(\btheta)|^{-1}
\end{eqnarray*}
and
\begin{eqnarray*}
\widetilde {\bf f}_0(x,s;\btheta)
&=&
\left[
\begin{array}{cc}
\widetilde {\bf f}_+(x,s;\btheta)
&
\widetilde {\bf f}_-(x,s;\btheta)
\end{array}
\right]
{\bf T}_{\pm 0}
(-{\bf T}_{00}+s{\bf I})^{-1}
,
\end{eqnarray*}
so the expressions for $\frac{\partial}{\partial\btheta}
\widetilde {\bf p}(s;\btheta)$ and $\frac{\partial}{\partial\btheta}
\widetilde {\bf f}(x,s;\btheta)$ follow by the matrix derivative calculus~\eqref{eq:mproduct}-\eqref{eq:minverse}. \rule{9pt}{9pt}

We note that the transient analysis given the SFM starts from level $X(0)= z>0$ and does so in some phase $\varphi(0)\in\mathcal{S}_+$ according to distribution ${\bf g}_+=[g_i]_{i\in\mathcal{S}_+}$ can be obtained by analogous arguments, since then
\begin{eqnarray*}
\widetilde {\bf p}(s;\btheta)
&=&
{\bf g}_+ \times \widetilde\bPsi(s;\btheta)\times e^{
{\bf D}(s;\btheta)
z}
\nonumber\\
&&
\quad
\times
\left(
{\bf I}
-
\left[
\begin{array}{cc}
{\bf I}&{\bf 0}
\end{array}
\right]
\left(-
{\bf T}_{\ominus\ominus}(\btheta)
+s{\bf I}
\right)^{-1}
{\bf T}_{\ominus +}(\btheta)
\bPsi(s;\btheta)
\right)^{-1}
\left[
\begin{array}{cc}
{\bf I}&{\bf 0}
\end{array}
\right]
\left(-
{\bf T}_{\ominus\ominus}(\btheta)
+s{\bf I}
\right)^{-1}
\end{eqnarray*}
and
\begin{eqnarray*}
\lefteqn{
\widetilde {\bf f}_+(x,s;\btheta)
=
\widetilde {\bf p}(s;\btheta)
{\bf T}_{\ominus +}(\btheta)
e^{{\bf K}(s;\btheta)x}
({\bf C}_+(\btheta))^{-1}
}
\nonumber\\
&&
\quad
+
1\{x\leq z\}\times
{\bf g}_+ \times
\widetilde\bPsi(s;\btheta)
\times
e^{{\bf D}(s;\btheta)(z-x)}
\left(
{\bf I}-
{\bf H}_{-+}^{(x,x)}(s;\btheta)
\bPsi(s;\btheta)
\right)^{-1}
{\bf H}_{-+}^{(x,x)}(s;\btheta)
({\bf C}_+(\btheta))^{-1}
\nonumber\\
&&
\quad
+
1\{x>z\}\times
{\bf g}_+\times {\bf H}_{++}^{(z,x)}(s;\btheta)
\left(
{\bf I}-\bPsi(s;\btheta){\bf H}_{-+}^{(x,x)}(s;\btheta)
\right)^{-1}
({\bf C}_+(\btheta))^{-1}
\end{eqnarray*}
and
\begin{eqnarray*}
\lefteqn{
\widetilde {\bf f}_-(x,s;\btheta)
=
\widetilde {\bf p}(s;\btheta)
{\bf T}_{\ominus +}(\btheta)
e^{{\bf K}(s;\btheta)x}
\bPsi(s;\btheta)
|{\bf C}_-(\btheta)|^{-1}
}
\nonumber\\
&&
\quad
+
1\{x\leq z\}
\times {\bf g}_+ \times
\widetilde\bPsi(s;\btheta)
\times
e^{{\bf D}(s;\btheta)(z-x)}
\left(
{\bf I}-{\bf H}_{-+}^{(z,z)}(s;\btheta)\bPsi(s;\btheta)
\right)^{-1}
|{\bf C}_-(\btheta)|^{-1}
\nonumber\\
&&
\quad
+
1\{x>z\}\times
{\bf g}_+\times
{\bf H}_{++}^{(z,x)}(s;\btheta)
\left(
{\bf I}-
\bPsi(s;\btheta)
{\bf H}_{-+}^{(x,x)}(s;\btheta)
\right)^{-1}
\bPsi(s;\btheta)
|{\bf C}_-(\btheta)|^{-1}.
\end{eqnarray*}

\section{Examples}\label{sec:Exa}

We now construct numerical examples to illustrate the application of the theory. First, in Subsection~\ref{ex:small}, we construct a simple SFM to demonstrate the application of the above expressions for the sensitivity analysis of stationary and transient quantities.
Next, in Subsection~\ref{ex:appl}, we consider a SMF constructed by Bean, O'Reilly and Sargison in~\cite{hydropaper} for the management of hydro power generation systems, to demonstrate the application potential of our methodology to real-world systems.
Finally, in Subsection~\ref{ex:insurance} we consider application potential  to the classical insurance risk processes.
\subsection{Simple SFM example}
\label{ex:small}
Consider a simple SFM $\{(X(t),\varphi(t)):t\geq 0\}$ studied in Bean, O'Reilly and Palmowski~\cite{BOP2021}, with $\mathcal{S}=\{1,2\}$, $\mathcal{S}_+=\{1\}$, $\mathcal{S}_-=\{2\}$, $c_1=1>0$, $c_2=-1<0$,
\begin{eqnarray*}
{\bf T}=
\left[
\begin{array}{cc}
-a&a\\b&-b
\end{array}
\right]
=
\left[
\begin{array}{cc}
{\bf T}_{++}&{\bf T}_{+-}
\\{\bf T}_{-+}&{\bf T}_{--}
\end{array}
\right],
\end{eqnarray*}
and with $a>b$, so that the process is stable with negative drift $\mu=\sum_ic_i\nu_i=(b-a)/(a+b)<0$, where $\bnu=[\nu_1 \quad \nu_2]=[b\quad a ]/(a+b)$ is the stationary distribution of the Markov chain $\{\varphi(t):t\geq 0\}$.

Here, phases $1$ and $2$ may be interpreted as the `on' and `off' phase of a telecommunication buffer, and the rates $c_1$ and $c_2$ as the rates at which data enters or leaves the buffer, respectively. Then $X(t)\geq 0$ represents the volume of data in the buffer at time $t$, while $\varphi(t)\in\mathcal{S}$ represents the state of the underlying environment (the Markov chain), which modulates the evolution of the SFM.

Let $\btheta=[a\quad b]$. Then, we have $\bPsi(\btheta)=1$, $\bxi(\btheta)=1$, ${\bf K}(\btheta)=b-a$,
\begin{eqnarray*}
\alpha(\btheta)&=&
\Bigg\{
\bxi(\btheta)
\left(-
{\bf T}_{--}(\btheta)
\right)^{-1}
\Bigg(
{\bf 1}
+
{\bf T}_{-+}(\btheta)
(-{\bf K}(\btheta))^{-1}
\left[
\begin{array}{cc}
({\bf C}_{+}(\btheta))^{-1}
&{\bf \Psi}(\btheta)|{\bf C}_{-}(\btheta)|^{-1}
\end{array}
\right]
{\bf 1}\Bigg)\Bigg\}^{_1}
\\
&=&
\left\{
b^{-1}(1+b(a-b)^{-1}[1\quad 1]
{\bf 1}
)
\right\}^{-1}
=
\left\{
b^{-1}\frac{a-b+2b}{a-b}
\right\}^{-1}
=\frac{b(a-b)}{a+b}
,
\end{eqnarray*}
and
\begin{eqnarray*}
{\bf p}_{-}(\btheta)&=&
\alpha(\btheta)
\left(-
{\bf T}_{--}(\btheta)
\right)^{-1}
=
\frac{a-b}{a+b}
,
\\
\left[
\begin{array}{cc}
\bpi_{+}(x;\btheta) & \bpi_{-}(x;\btheta)
\end{array}
\right]
&=&
{\bf p}_{-}(\btheta)
{\bf T}_{-+}(\btheta)
e^{{\bf K}\left(\btheta\right)x}
\left[
\begin{array}{cc}
({\bf C}_{+}(\btheta))^{-1}
&{\bf \Psi}(\btheta)|{\bf C}_{-}(\btheta)|^{-1}
\end{array}
\right]
\\
&=&
\frac{a-b}{a+b}
b e^{(b-a)x}[1\quad 1]
.
\end{eqnarray*}

\begin{Notes}
We have,
\begin{eqnarray*}
{\bf p}_{-}(\btheta)
+
\int_{x=0}^{\infty}
\bpi_{-}(x;\btheta)
dx
&=&
\frac{a-b}{a+b}
+
\int_{x=0}^{\infty} \frac{a-b}{a+b} b  e^{(b-a)x}dx
=
\frac{a-b}{a+b}
+\frac{b}{a+b}
=
\frac{a}{a+b}
=
\nu_2
,
\\
\int_{x=0}^{\infty}
\bpi_{+}(x;\btheta)
dx
&=&
\int_{x=0}^{\infty}
\int_{x=0}^{\infty} \frac{a-b}{a+b} b  e^{(b-a)x}dx
=
\frac{b}{a+b}
=\nu_1
,
\end{eqnarray*}
as expected. \hfill $\square$
\end{Notes}

Therefore,
\begin{eqnarray*}
\frac{\partial}{\partial\btheta}
{\bf p}_-(\btheta)
&=&
\left[
\begin{array}{cc}
\frac{\partial}{\partial a}
\left(\frac{a-b}{a+b}\right)
&
\frac{\partial}{\partial b}
\left(\frac{a-b}{a+b}\right)
\end{array}
\right]
,
\\
\frac{\partial}{\partial a}
\left(\frac{a-b}{a+b}\right)
&=&
\frac{(a+b)-(a-b)}{(a+b)^2}
=
\frac{2b}{(a+b)^2}
>0
,
\\
\frac{\partial}{\partial b}
\left(\frac{a-b}{a+b}\right)
&=&
\frac{-(a+b)-(a-b)}{(a+b)^2}
=
\frac{-2a}{(a+b)^2}
<0
,
\end{eqnarray*}
and
\begin{eqnarray*}
\frac{\partial}{\partial\btheta}
\bpi_{+}(x;\btheta)
&=&
\left[
\begin{array}{cc}
\frac{\partial}{\partial a}
\bpi_{+}(x;\btheta)
&
\frac{\partial}{\partial b}
\bpi_{+}(x;\btheta)
\end{array}
\right]
,
\\
\frac{\partial}{\partial a}
\bpi_{+}(x;\btheta)
&=&
\frac{2b}{(a+b)^2} b e^{(b-a)x}
-
\frac{a-b}{a+b}b e^{(b-a)x}x
>0
\iff x<\frac{2}{a^2-b^2}
,
\\
\frac{\partial}{\partial b}
\bpi_{+}(x;\btheta)
&=&
\frac{-2a}{(a+b)^2} b e^{(b-a)x}
+
\frac{a-b}{a+b}
(e^{(b-a)x}+be^{(b-a)x}x)
>0
\iff
x>\frac{2a}{a^2-b^2}-\frac{1}{b}
,
\end{eqnarray*}
and, since here $({\bf C}_{+}(\btheta))^{-1}
={\bf \Psi}(\btheta)|{\bf C}_{-}(\btheta)|^{-1}=1$,
\begin{eqnarray*}
\frac{\partial}{\partial\btheta}
\bpi_{-}(x;\btheta)
&=&
\frac{\partial}{\partial\btheta}
\bpi_{+}(x;\btheta).
\end{eqnarray*}
We illustrate $\frac{\partial}{\partial\btheta}
{\bf p}_-(\btheta)$ and $\frac{\partial}{\partial\btheta}
\bpi_{+}(x;\btheta)$ in Figures~\ref{fig:ex1_stationary1}-\ref{fig:ex1_stationary2}.

Next,
\begin{eqnarray*}
\bPsi(s;\btheta)
&=&
\frac{(a+b+2s)-\sqrt{(a+b+2s)^2-4ab}}{2b}
,\\
{\bf D}(s;\btheta)
&=&
-b-s
+
\frac{(a+b+2s)-\sqrt{(a+b+2s)^2-4ab}}{2}
,\\
\bXi(s;\btheta)
&=&
\frac{(a+b+2s)-\sqrt{(a+b+2s)^2-4ab}}{2a}
,\\
{\bf U}(s;\btheta)
&=&
-a-s+
\frac{(a+b+2s)-\sqrt{(a+b+2s)^2-4ab}}{2}
,
\end{eqnarray*}
and with (scalar) ${\bf g}_-=1$, noting that here $\left(
\frac{\partial}{\partial \btheta}
 e^{
{\bf D}(s;\btheta)
z}
\right)=
e^{
{\bf D}(s;\btheta)z}
\left(
\frac{\partial}{\partial \btheta}
 {\bf D}(s;\btheta)
\right)z$, we have
\begin{eqnarray*}
\widetilde {\bf p}(s;\btheta)
&=&
{\bf g}_-\times e^{
{\bf D}(s;\btheta)
z}
\left(
{\bf I}
-
\left(-
{\bf T}_{--}(\btheta)
+s{\bf I}
\right)^{-1}
{\bf T}_{- +}(\btheta)
\bPsi(s;\btheta)
\right)^{-1}
\left(-
{\bf T}_{--}(\btheta)
+s{\bf I}
\right)^{-1}
\\
&=&
 e^{
{\bf D}(s;\btheta)
z}
\left(
1-(b+s)^{-1}b\bPsi(s;\btheta)
\right)^{-1}
(b+s)^{-1}
=
e^{
{\bf D}(s;\btheta)
z}
\left(
\frac{b+s-b\bPsi(s;\btheta)}{b+s}
\right)^{-1}
(b+s)^{-1}
\\
&=&
e^{
{\bf D}(s;\btheta)
z}
\times
\left(b+s-b\bPsi(s;\btheta)\right)^{-1}
,
\\
\frac{\partial}{\partial a}
\widetilde {\bf p}(s;\btheta)
&=&
\left(
e^{
{\bf D}(s;\btheta)z}
\left(
\frac{\partial}{\partial a}
 {\bf D}(s;\btheta)
\right)z
\right)
\left(b+s-b\bPsi(s;\btheta)\right)^{-1}
\\
&&
-
e^{
{\bf D}(s;\btheta)z}
\left(b+s-b\bPsi(s;\btheta)\right)^{-2}
\left(
-b
\frac{\partial}{\partial a}
 \bPsi(s;\btheta)
 \right)
,\\
\frac{\partial}{\partial b}
\widetilde {\bf p}(s;\btheta)
&=&
\left(
e^{
{\bf D}(s;\btheta)z}
\left(
\frac{\partial}{\partial b}
 {\bf D}(s;\btheta)
\right)z
\right)
\left(b+s-b\bPsi(s;\btheta)\right)^{-1}
\\
&&
-
e^{
{\bf D}(s;\btheta)z}
\left(b+s-b\bPsi(s;\btheta)\right)^{-2}
\left(
1-\bPsi(s;\btheta)
-b
\frac{\partial}{\partial b}
 \bPsi(s;\btheta)
\right)
,
\end{eqnarray*}
where
\begin{eqnarray*}
\frac{\partial}{\partial a}\bPsi(s;\btheta)
&=&
\frac{1-(1/2)\left( (a+b+2s)^2-4ab \right)^{-1/2}
 \left(
2(a+b+2s)-4b
 \right)
 }{2b}
,\\
\frac{\partial}{\partial b}\bPsi(s;\btheta)
&=&
\frac{
\left\{
1-(1/2)\left( (a+b+2s)^2-4ab \right)^{-1/2}
 \left(
2(a+b+2s)-4a
 \right)
 \right\}
 2b
}{
4b^2
}
\\
&&
-
\frac{
 \left\{
 (a+b+2s)-\sqrt{(a+b+2s)^2-4ab}
\right\}2
}{
4b^2
}
,\\
\frac{\partial}{\partial a}
 {\bf D}(s;\btheta)
 &=&
 \frac{1-(1/2)\left( (a+b+2s)^2-4ab \right)^{-1/2}
 \left(
2(a+b+2s)-4b
 \right)
 }{2}
 ,\\
 \frac{\partial}{\partial b}
 {\bf D}(s;\btheta)
 &=&
 -1+
 \frac{1-(1/2)\left( (a+b+2s)^2-4ab \right)^{-1/2}
 \left(
2(a+b+2s)-4a
 \right)
 }{2}.
\end{eqnarray*}
To obtain $\frac{\partial}{\partial \btheta}
 {\bf p}(t;\btheta)$, we inverted the Laplace transform $\frac{\partial}{\partial \btheta}
\widetilde {\bf p}(s;\btheta)$ using the algorithm by Den Iseger~\cite{DenIseger_2006} as
coded in Toutain et al.~\cite{2011TB}. 
The output is presented in Figure~\ref{fig:ex1_transient1}.

\begin{Notes}
We have,
\begin{eqnarray*}
\frac{\partial}{\partial s}
\bPsi(s;\btheta)
&=&
\frac{\partial}{\partial s}
\frac{(a+b+2s)-\sqrt{(a+b+2s)^2-4ab}}{2b}
\\
&=&
\frac{
2
-
(1/2)
\left(
(a+b+2s)^2-4ab
\right)^{-1/2}
\times
2(a+b+2s)2
}
{2b}
,\\
\lim_{s\downarrow 0}
\left(
\frac{\partial}{\partial s}
\bPsi(s;\btheta)
\right)
&=&
\frac{
1-(a-b)^{-1}(a+b)
}{b}
=
\frac{(a-b)-(a+b)}{(a-b)b}
=\frac{-2}{a-b},
\end{eqnarray*}
and so the mean time of the busy period is $-\lim_{s\downarrow 0}
\left(
\frac{\partial}{\partial s}
\bPsi(s;\btheta)
\right)=\frac{2}{a-b}$, with
\begin{eqnarray*}
\frac{\partial}{\partial a}
\left(
\frac{2}{a-b}
\right)
&=&
\frac{-2}{(a-b)^2}
<0,
\\
\frac{\partial}{\partial b}
\left(
\frac{2}{a-b}
\right)
&=&
\frac{2}{(a-b)^2}
>0,
\end{eqnarray*}
as expected. \hfill $\square$
\end{Notes}

Further,
\begin{eqnarray*}
{\bf K}(s;\btheta)
&=&-a-s+\frac{(a+b+2s)-\sqrt{(a+b+2s)^2-4ab}}{2}
=
{\bf U}(s;\btheta)
,\\
{\bf J}(s;\btheta)
&=&-b-s+
\frac{(a+b+2s)-\sqrt{(a+b+2s)^2-4ab}}{2}
={\bf D}(s;\btheta)
,
\end{eqnarray*}
and, since here ${\bf C}_+=|{\bf C}_-|=1$, we have
\begin{eqnarray*}
\widetilde {\bf f}_+(x,s;\btheta)
&=&
\widetilde {\bf p}(s;\btheta)
{\bf T}_{\ominus +}(\btheta)
e^{{\bf K}(s;\btheta)x}
\nonumber\\
&&
\quad
+
1\{x\leq z\}\times
e^{{\bf D}(s;\btheta)(z-x)}
\left(
{\bf I}-
{\bf H}_{-+}^{(x,x)}(s;\btheta)
\bPsi(s;\btheta)
\right)^{-1}
{\bf H}_{-+}^{(x,x)}(s;\btheta)
\nonumber\\
&&
\quad
+
1\{x>z\}\times
{\bf H}_{-+}^{(z,z)}(s;\btheta)
{\bf H}_{++}^{(z,x)}(s;\btheta)
\left(
{\bf I}-\bPsi(s;\btheta){\bf H}_{-+}^{(x,x)}(s;\btheta)
\right)^{-1}
,\\
\widetilde {\bf f}_-(x,s;\btheta)
&=&
\widetilde {\bf p}(s;\btheta)
{\bf T}_{\ominus +}(\btheta)
e^{{\bf K}(s;\btheta)x}
\bPsi(s;\btheta)
\nonumber\\
&&
\quad
+
1\{x\leq z\}\times
e^{{\bf D}(s;\btheta)(z-x)}
\left(
{\bf I}-{\bf H}_{-+}^{(x,x)}(s;\btheta)\bPsi(s;\btheta)
\right)^{-1}
\nonumber\\
&&
\quad
+
1\{x>z\}\times
{\bf H}_{-+}^{(z,z)}(s;\btheta)
{\bf H}_{++}^{(z,x)}(s;\btheta)
\left(
{\bf I}-
\bPsi(s;\btheta)
{\bf H}_{-+}^{(x,x)}(s;\btheta)
\right)^{-1}
\bPsi(s;\btheta)
.
\end{eqnarray*}
To obtain $\frac{\partial}{\partial\btheta} {\bf f}_+(x,t;\btheta)$ and $\frac{\partial}{\partial\btheta} {\bf f}_-(x,t;\btheta)$, we evaluate the Laplace transforms $\frac{\partial}{\partial\btheta}\widetilde {\bf f}_+(x,s;\btheta)$ and $\frac{\partial}{\partial\btheta}\widetilde {\bf f}_-(x,s;\btheta)$ respectively, using Theorem~\ref{th:der_LT_transient}, and then invert them using the algorithm by 
Horv{\'a}th et al.~\cite{horvath2020numerical}.
The output is presented in Figures~\ref{fig:ex1_transientNEW} and \ref{fig:ex1_transientNEW2}.

In Figure \ref{fig:ex1_stationary1}, the left graph shows the stationary boundary probability $p_-(\theta)$. Increasing $a$, which corresponds to more frequent transitions into the decreasing phase, increases $p_-(\theta)$, while increasing $b$, which promotes transitions into the increasing phase, decreases it; this is reflected in the signs of the corresponding sensitivities shown in the middle and right panels. The magnitude of these effects increases as $b \uparrow a$ or $b \downarrow 0$, where the system approaches the critical regime.

\begin{figure}[h]
\centering
\includegraphics[scale=0.32]{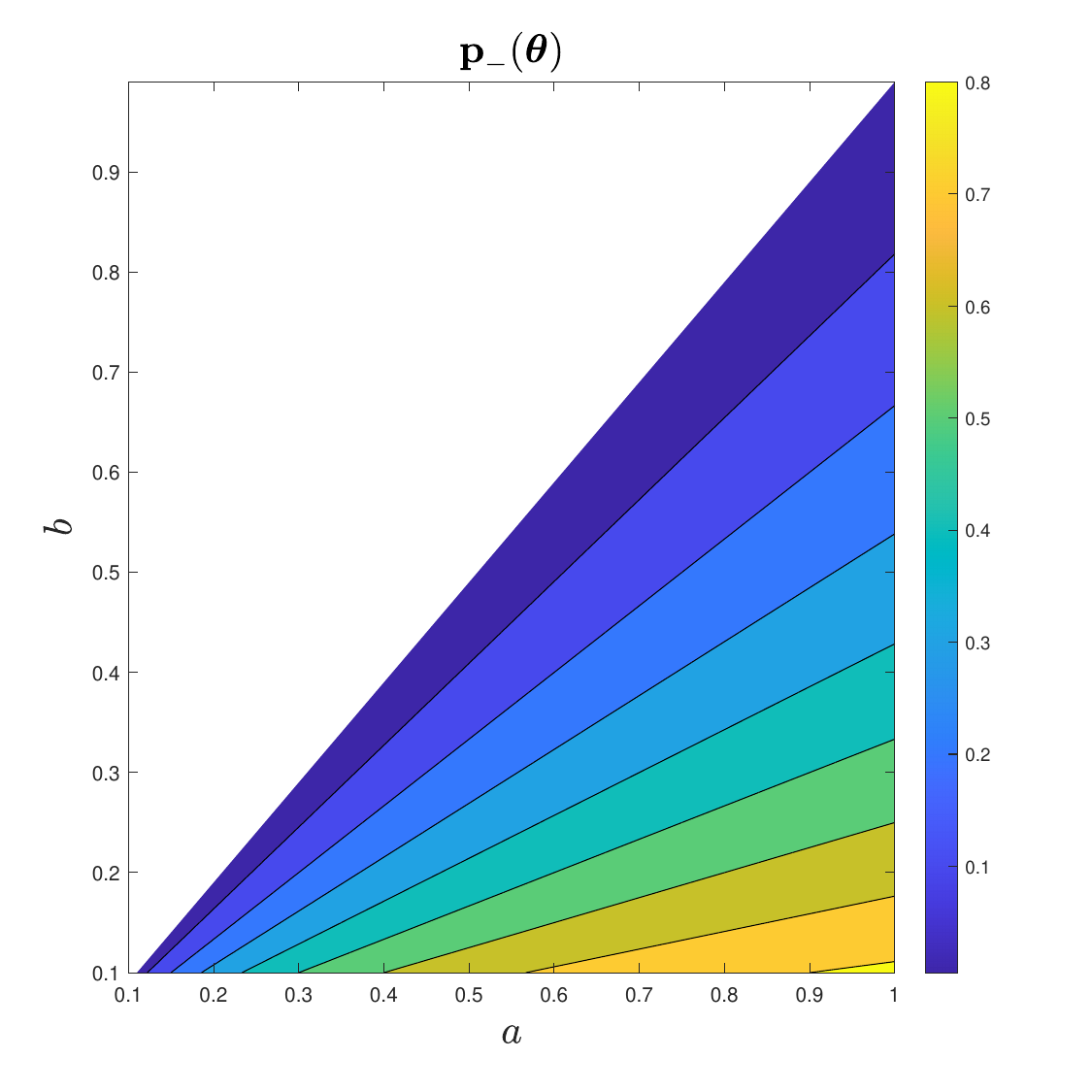}
\includegraphics[scale=0.32]{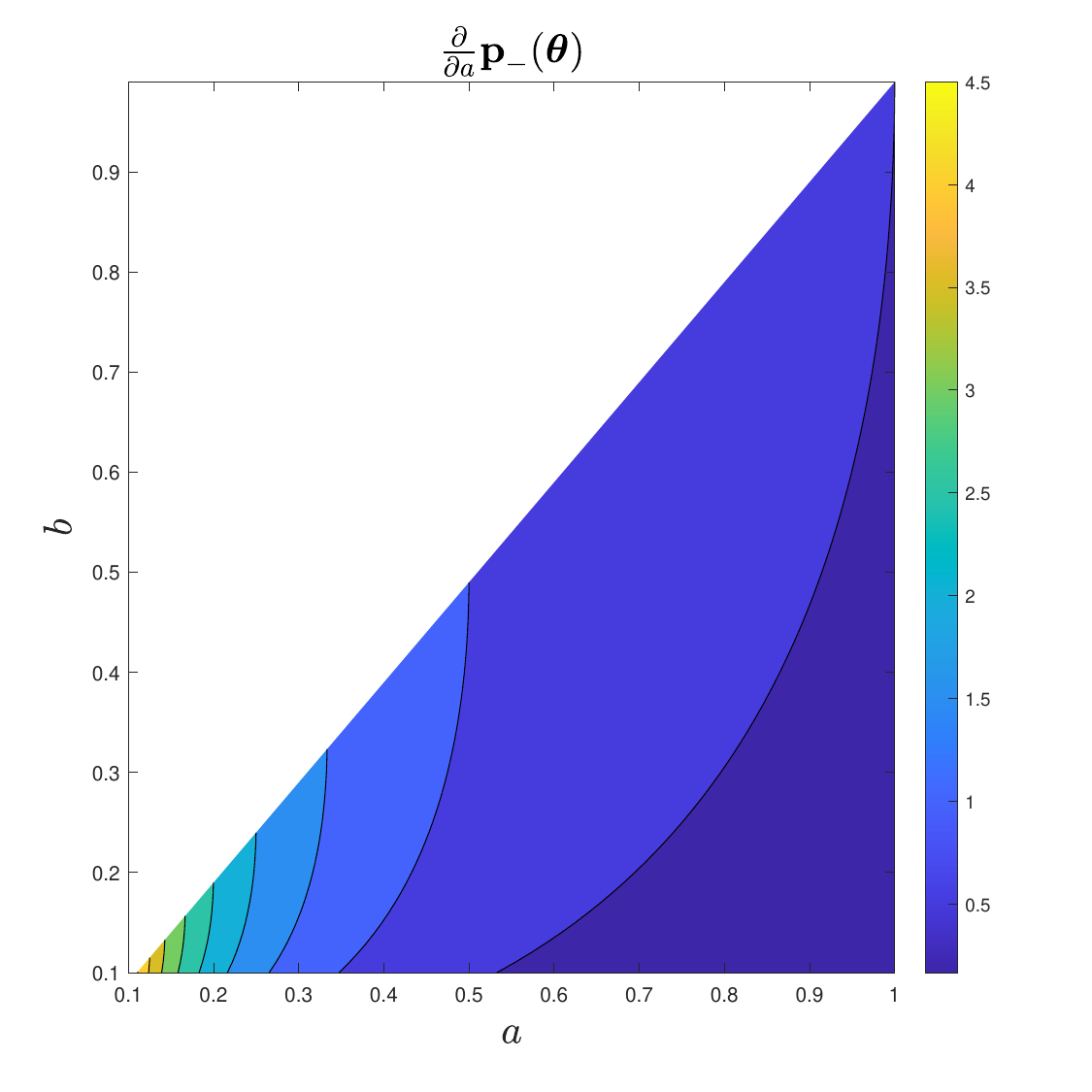}
\includegraphics[scale=0.32]{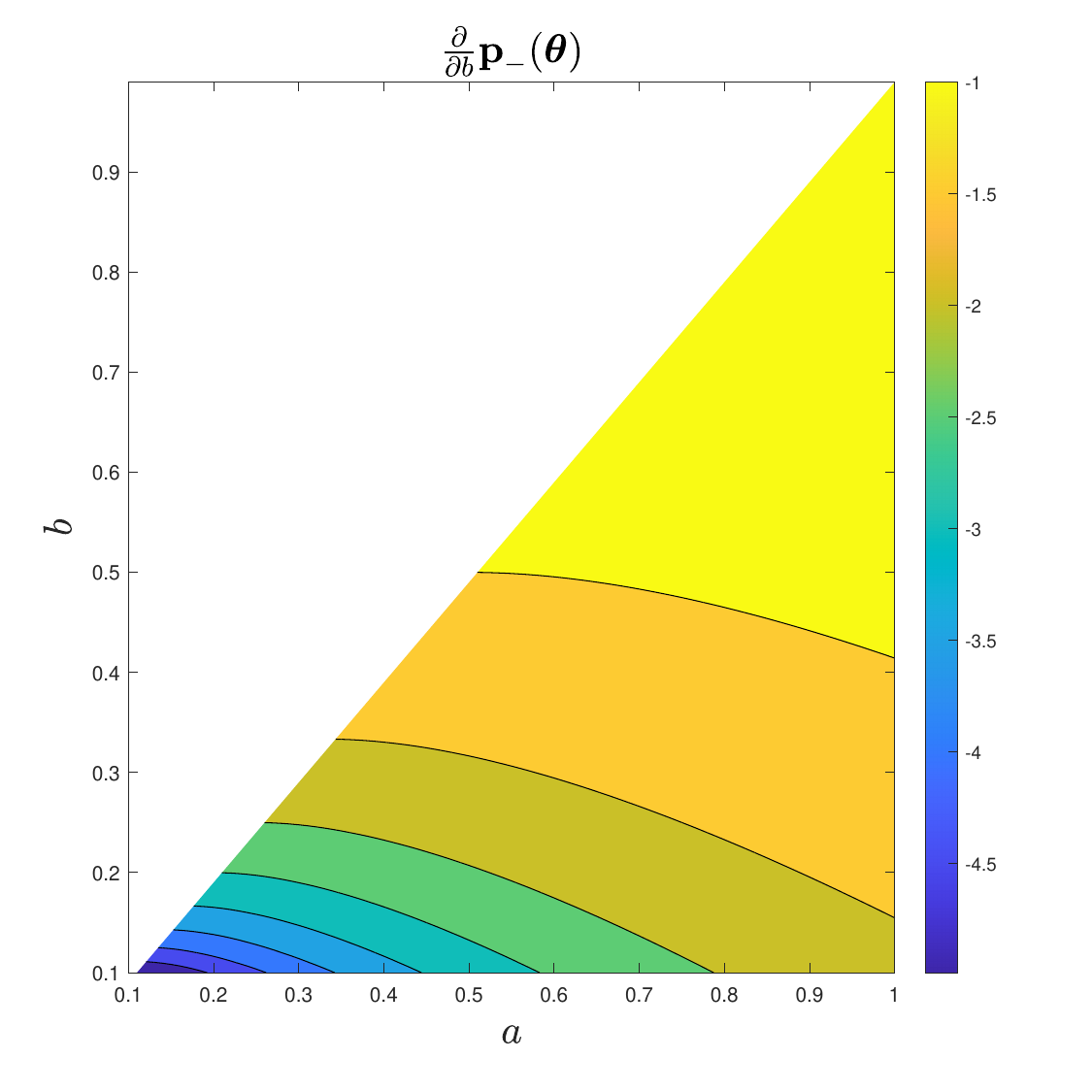}
\caption{Derivatives of the stationary quantities of the process $\{( X(t),\varphi(t)):t\geq 0\}$: ${\bf p}_-(\btheta)$, $\frac{\partial}{\partial a}{\bf p}_-(\btheta)$, and $\frac{\partial}{\partial b}{\bf p}_-(\btheta)$, for $0.1\leq b\leq a-0.01\leq 0.99$ (domain on each graph is limited to subdiagonal triangle).
}
\label{fig:ex1_stationary1}
\end{figure}

\begin{figure}[h]
\centering
\includegraphics[scale=0.32]{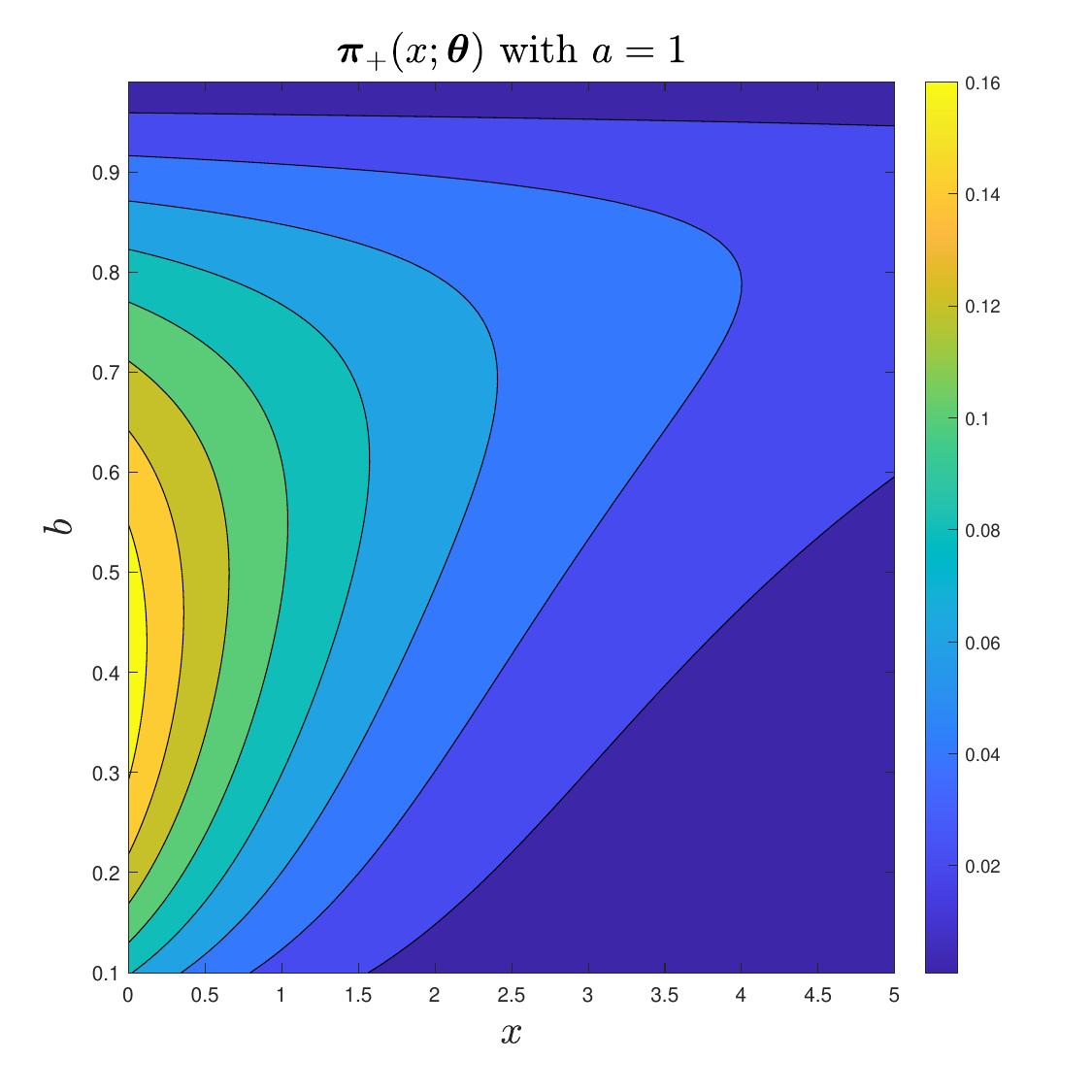}
\includegraphics[scale=0.32]{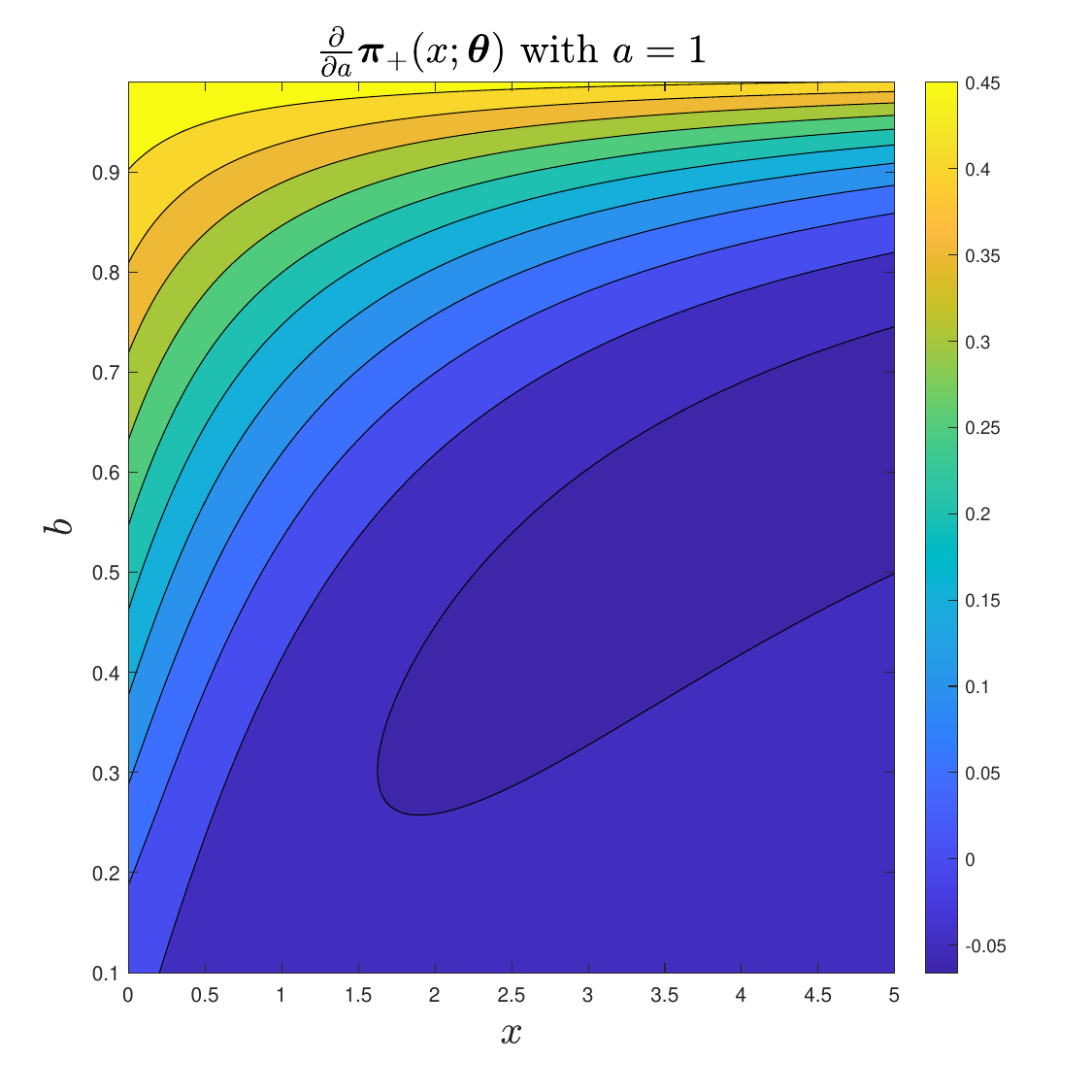}
\includegraphics[scale=0.32]{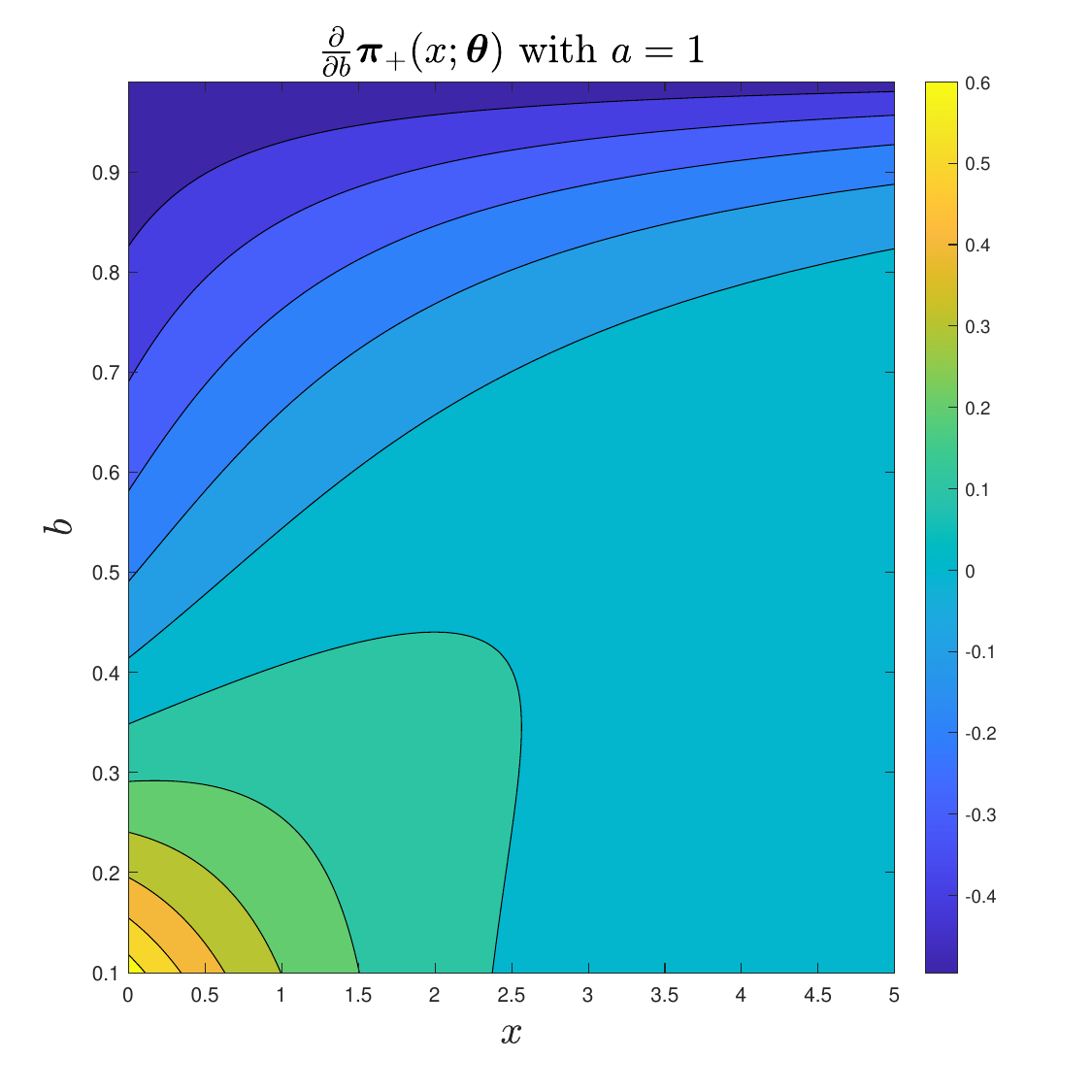}
\caption{Derivatives of the stationary quantities of the process $\{( X(t),\varphi(t)):t\geq 0\}$:
${\bpi}_+(x;\btheta)$, $\frac{\partial}{\partial a}{\bpi}_+(x;\btheta)$, and $\frac{\partial}{\partial b}{\bpi}_+(x;\btheta)$, for $0.1\leq b\leq 0.99$ and $0<x<5$.}
\label{fig:ex1_stationary2}
\end{figure}
In Figure \ref{fig:ex1_stationary2}, we observe that the stationary probability density in phase~1 is decreasing in $x$ for all values of $b$ with $a=1$. For small values of $b$, the process spends relatively little time in state~1, since transitions from state~2 to state~1 are infrequent, and therefore the density in phase~1 is lower. As $b$ increases, transitions to state~1 become more frequent, so the process spends more time in the increasing phase and the distribution shifts towards larger values of $x$.
For mid values of $b$ these two effects balance out and in this range $\bpi_+(x)$ is larger for smaller values of $x$.\\
The sensitivities are strongest for smaller values of $x$. The derivative with respect to $a$ is positive when $a=1$, indicating that increasing $a$ increases the stationary density in phase~1. The derivative with respect to $b$ has mixed sign: it is negative for small values of $b$ and positive for larger values of $b$. This reflects the two competing effects of increasing $b$: on the one hand, it increases the frequency of transitions into phase~1; on the other hand, it shifts probability mass towards larger values of $x$.

\begin{figure}[h]
\centering
\includegraphics[scale=0.32]{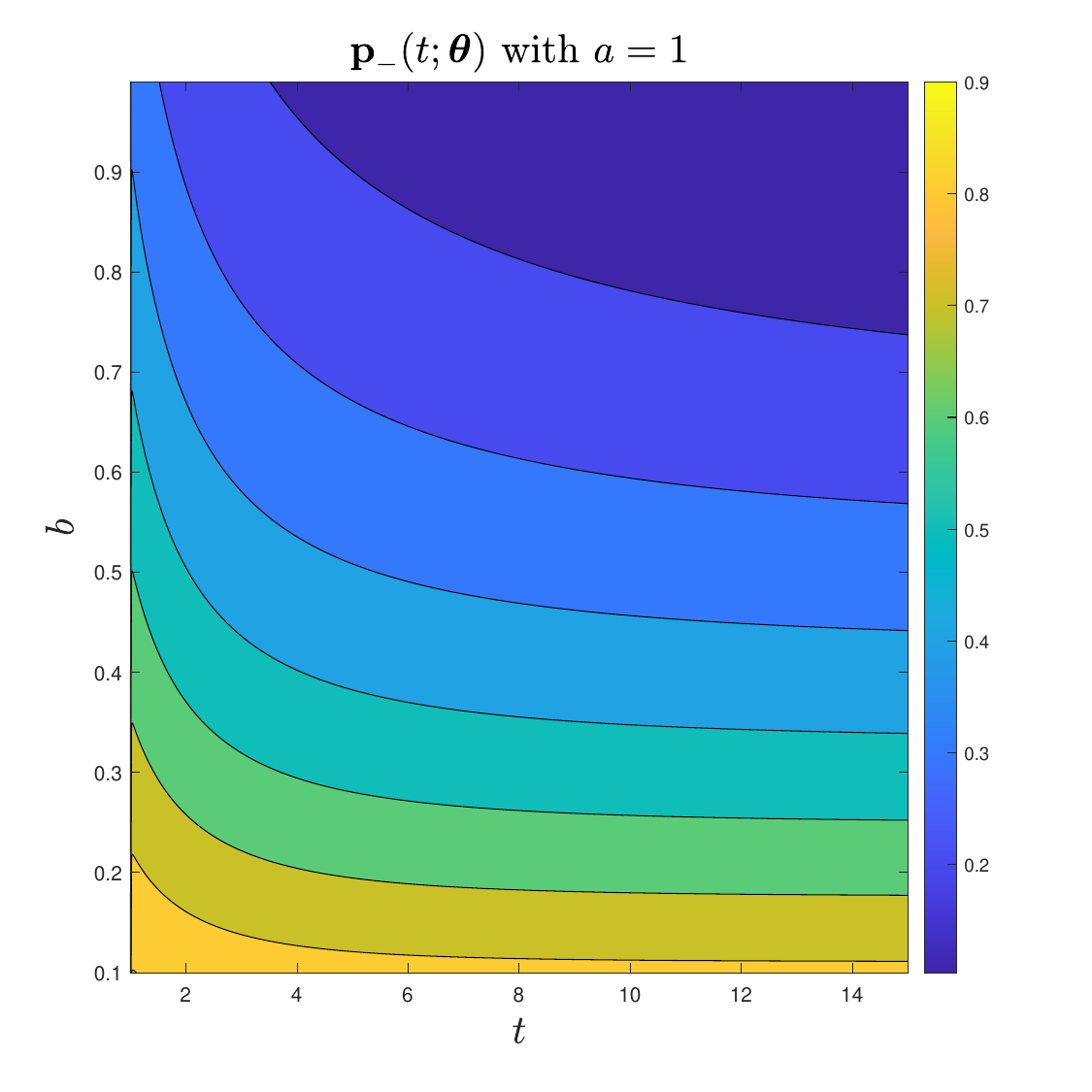}
\includegraphics[scale=0.32]{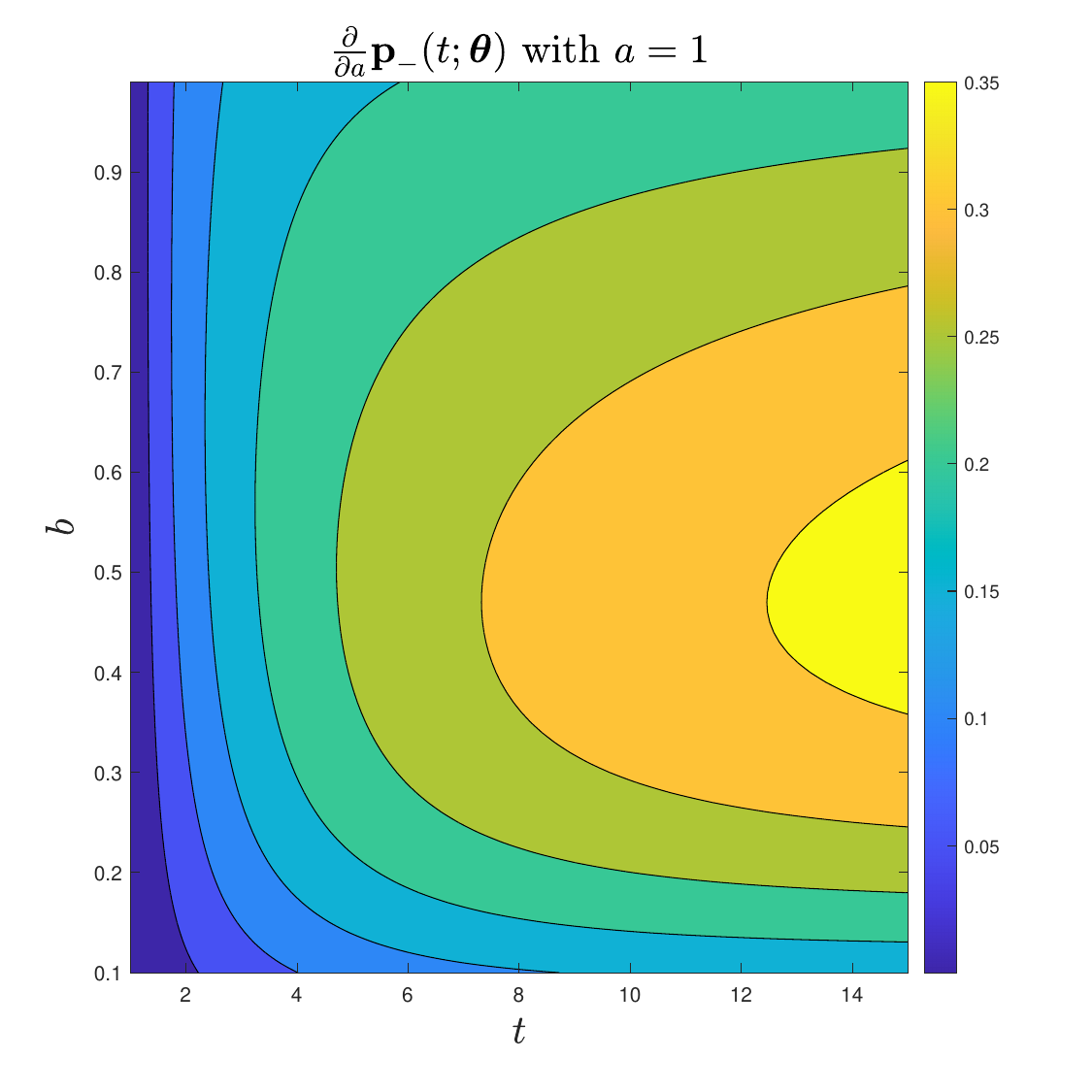}
\includegraphics[scale=0.32]{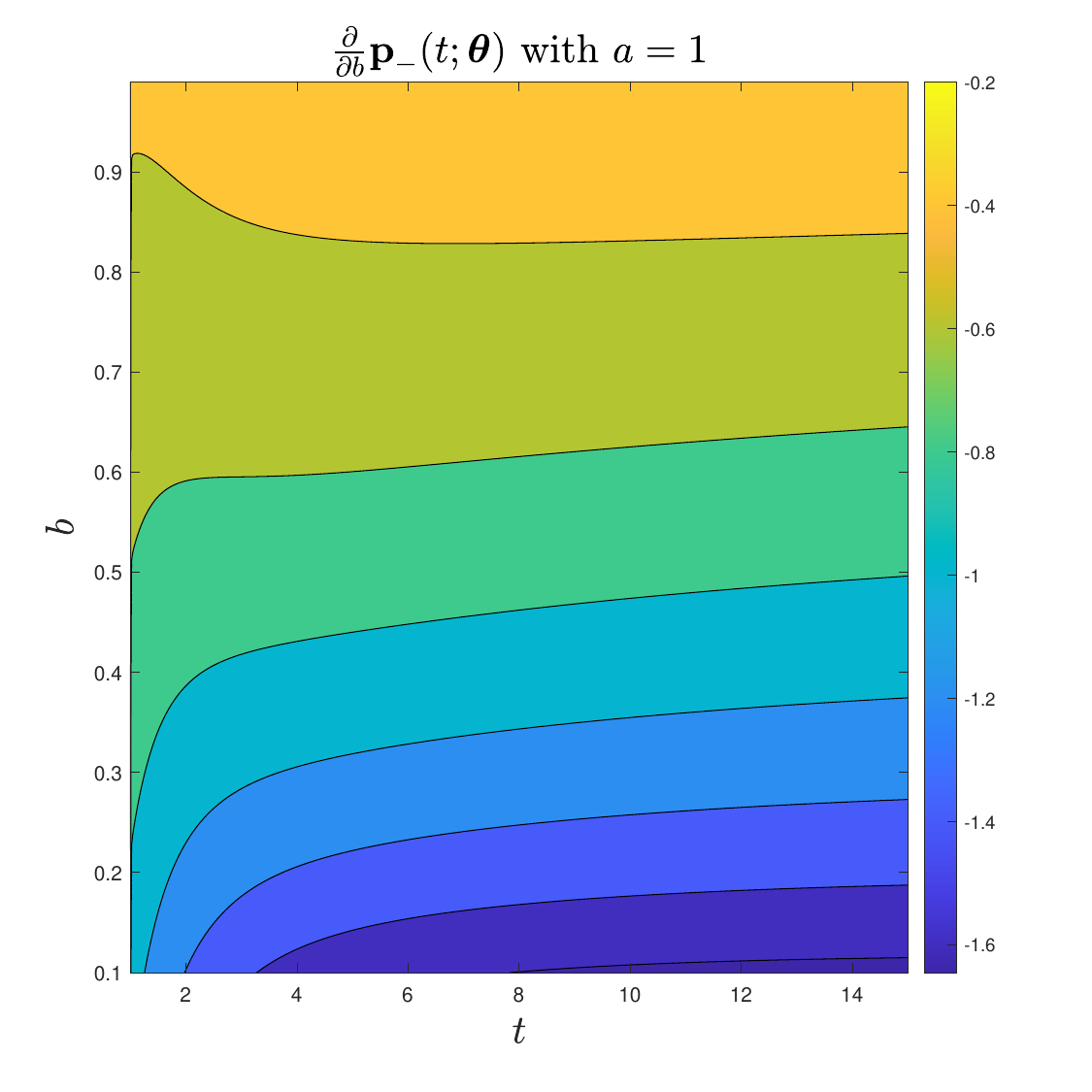}
\caption{Derivatives of the transient quantities of the process $\{( X(t),\varphi(t)):t\geq 0\}$: ${\bf p}_-(t;\btheta)$, $\frac{\partial}{\partial a}{\bf p}_-(t;\btheta)$, and $\frac{\partial}{\partial b}{\bf p}_-(t;\btheta)$, for $0.1\leq b\leq 0.99$ and $1\leq t\leq 15$, assuming $(X(0),\varphi(0))=(1,2)$, with the minimum time $t=1$ required to reach level $0$ given start in level $z=1$.
}
\label{fig:ex1_transient1}
\end{figure}

\begin{figure}[h]
\centering
\includegraphics[scale=0.32]{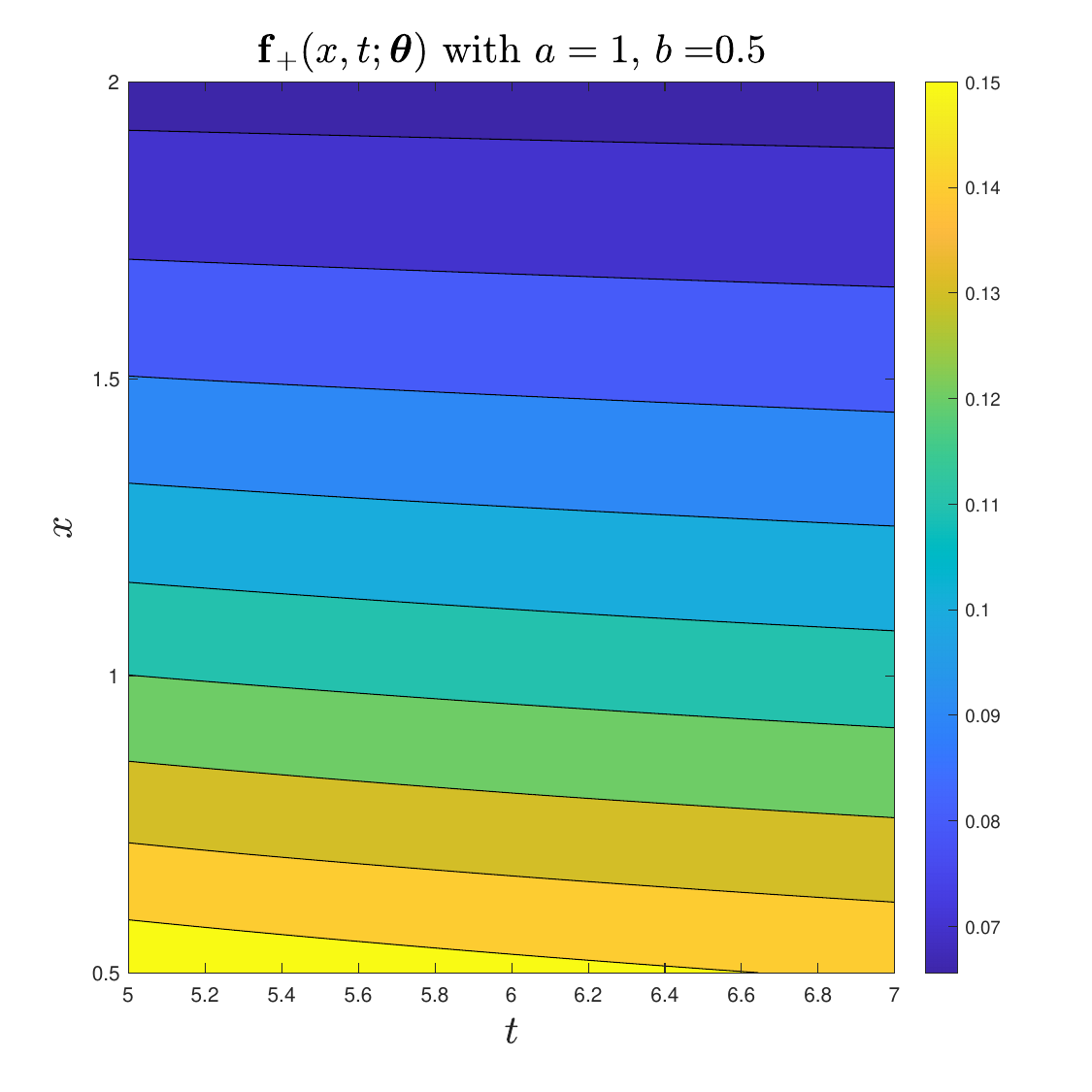}
\includegraphics[scale=0.32]{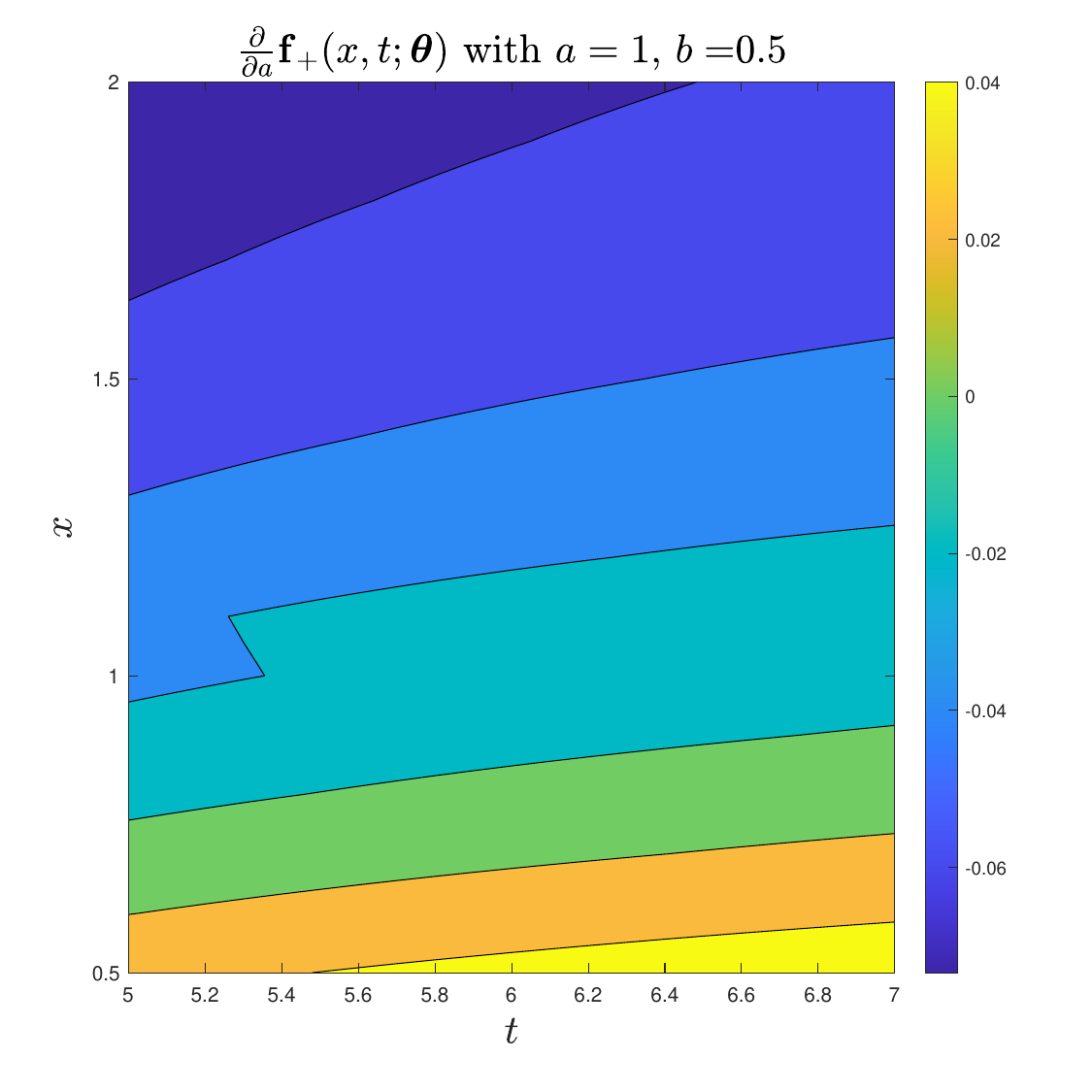}
\includegraphics[scale=0.32]{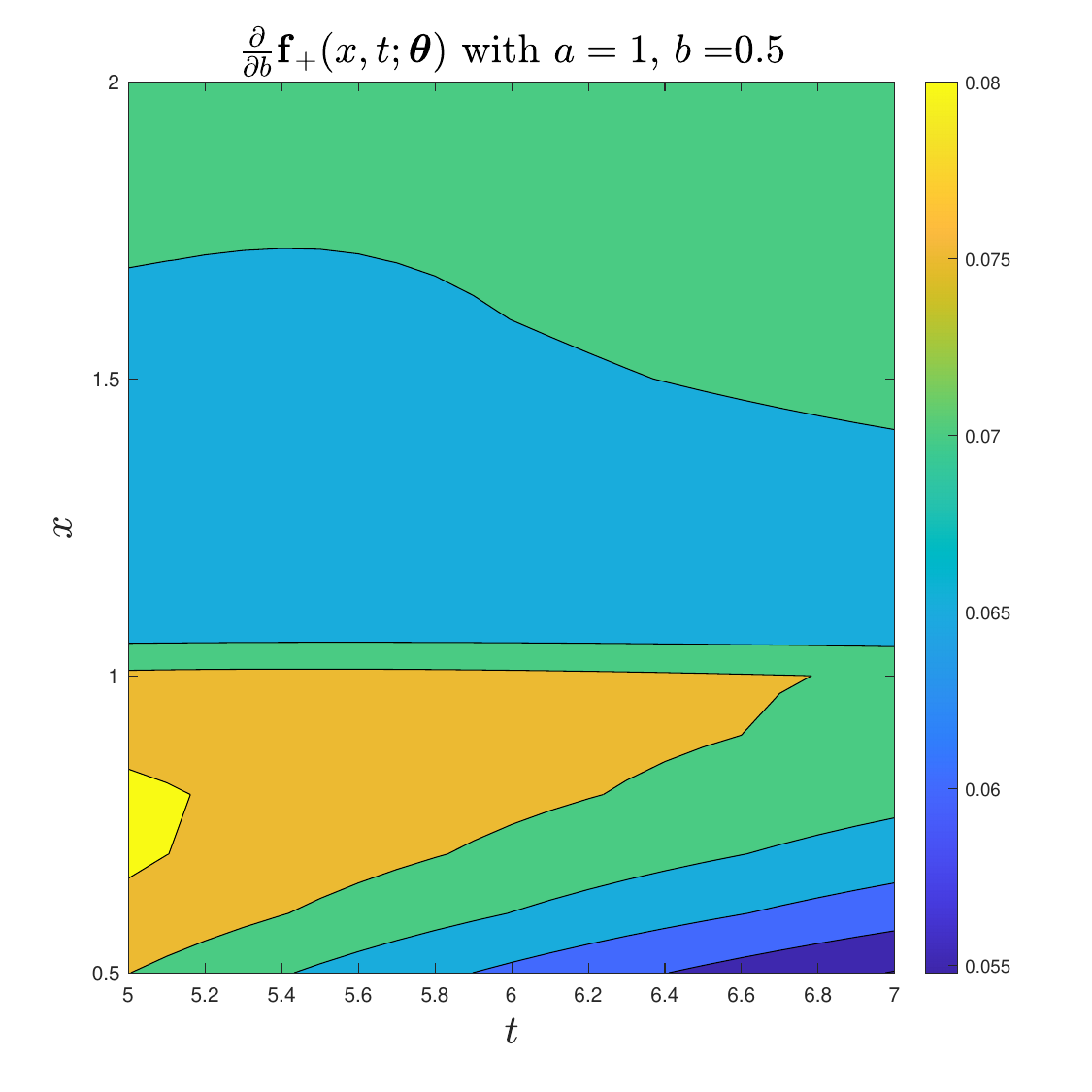}
\caption{Derivatives of the transient quantities of the process $\{( X(t),\varphi(t)):t\geq 0\}$: ${\bf f}_+(x,t;\btheta)$, $\frac{\partial}{\partial a}{\bf f}_+(x,t;\btheta)$, $\frac{\partial}{\partial b}{\bf f}_+(x,t;\btheta)$; assuming $(X(0),\varphi(0))=(1,2)$ with $a=1$, $b=0.5$, $x=[0.5, 2]$, $5<t<7$.
}
\label{fig:ex1_transientNEW}
\end{figure}

\begin{figure}[h]
\centering
\includegraphics[scale=0.32]{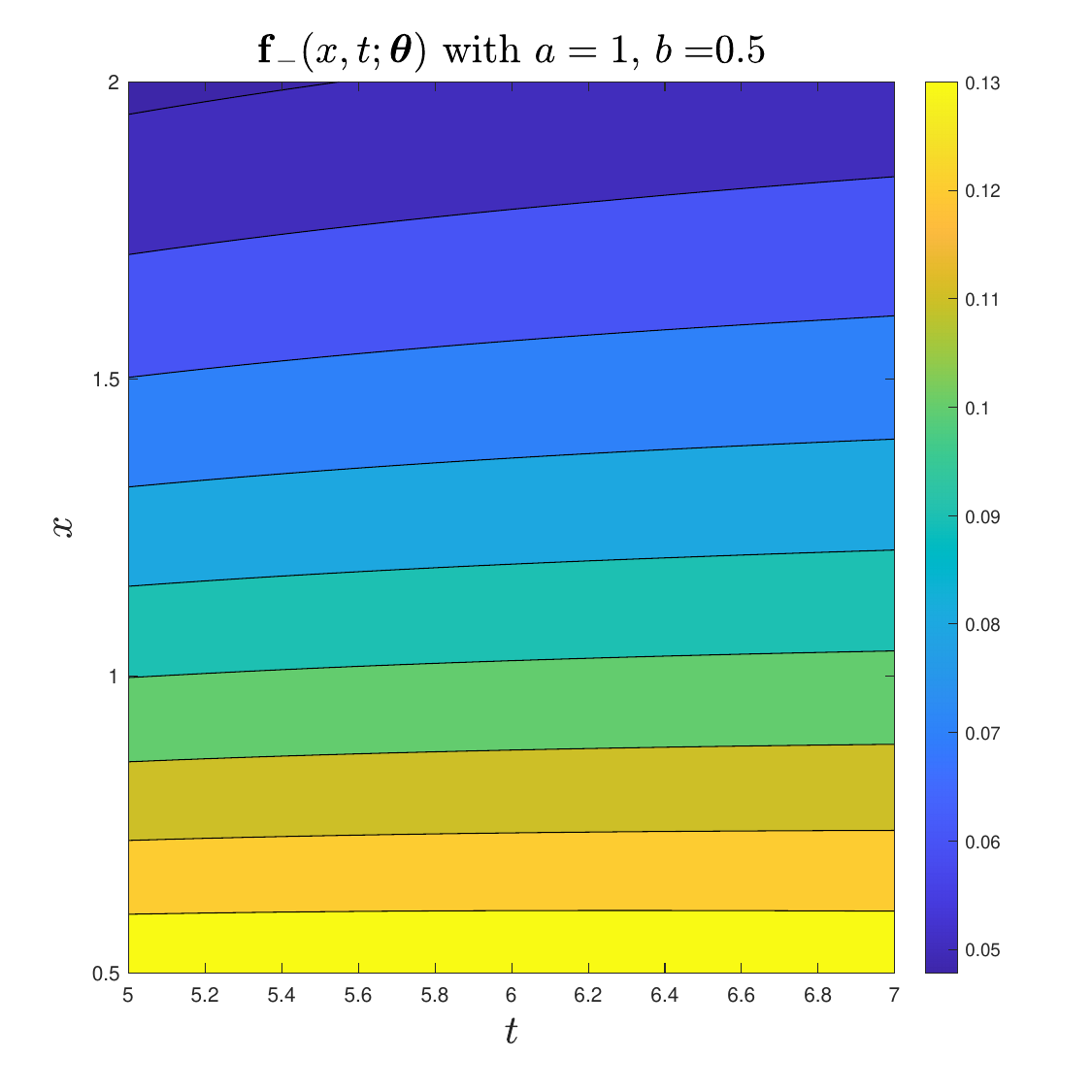}
\includegraphics[scale=0.32]{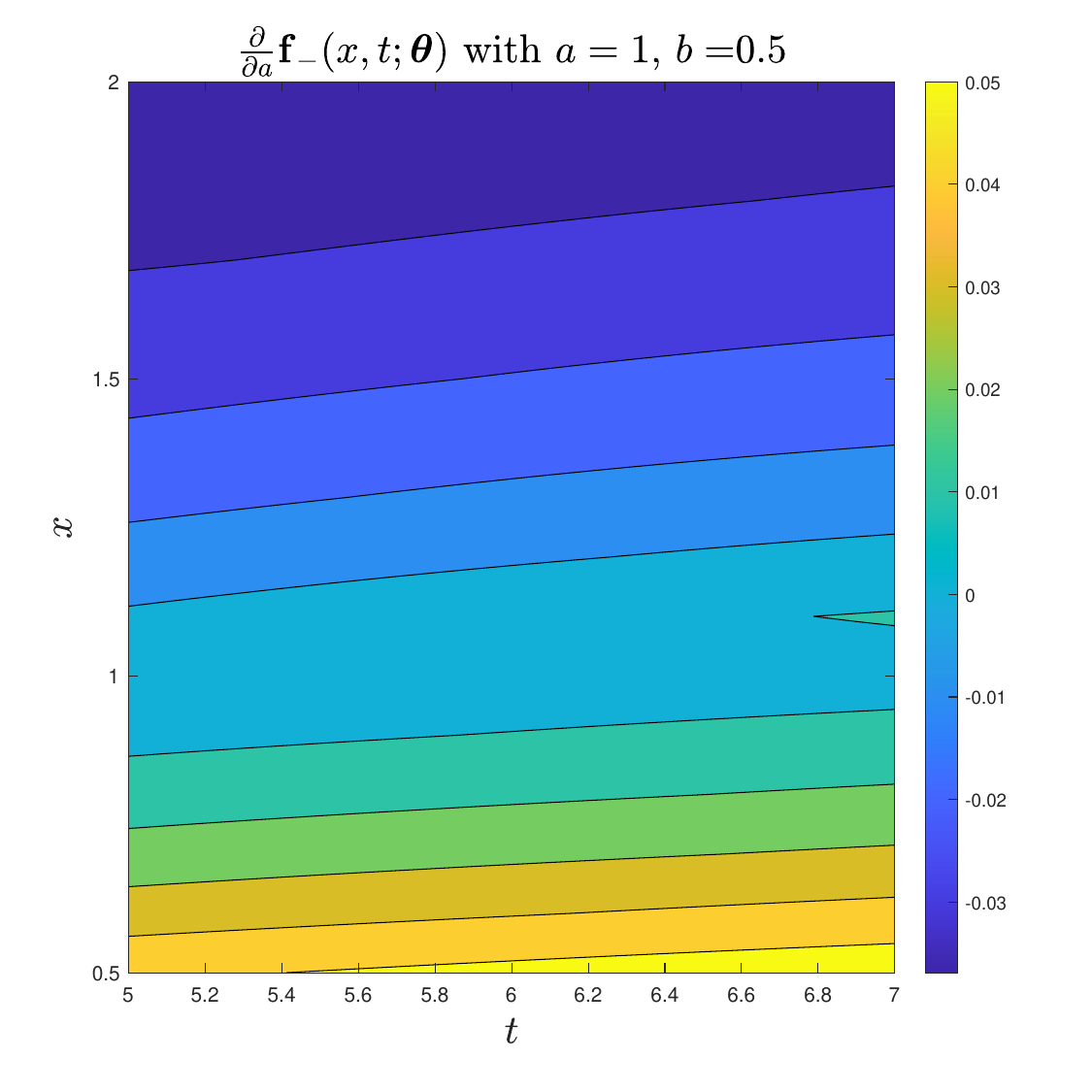}
\includegraphics[scale=0.32]{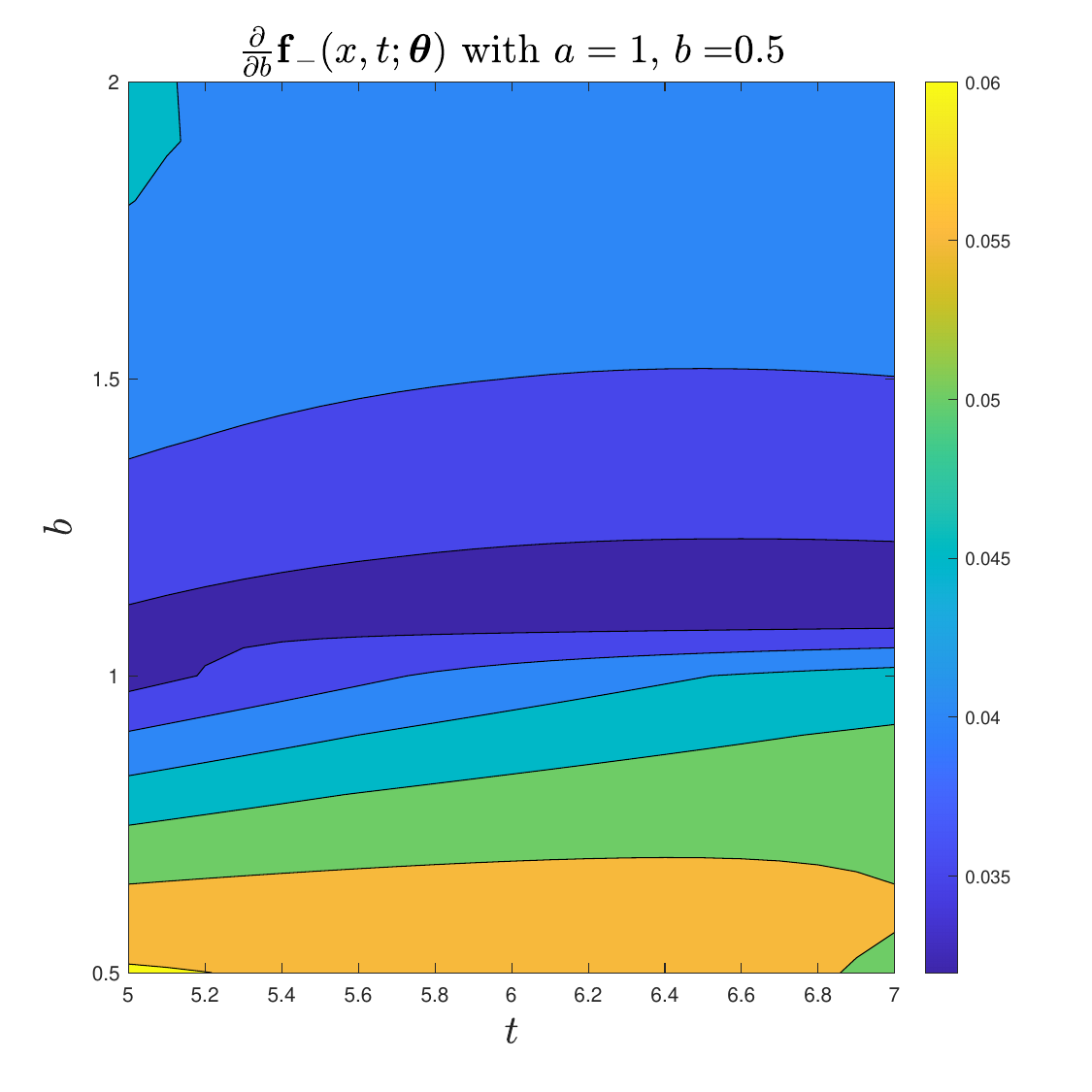}
\caption{Derivatives of the transient quantities of the process $\{( X(t),\varphi(t)):t\geq 0\}$: ${\bf f}_-(x,t;\btheta)$, $\frac{\partial}{\partial a}{\bf f}_-(x,t;\btheta)$, $\frac{\partial}{\partial b}{\bf f}_-(x,t;\btheta)$; assuming $(X(0),\varphi(0))=(1,2)$ with $a=1$, $b=0.5$, $x=[0.5, 2]$, $5<t<7$.
}
\label{fig:ex1_transientNEW2}
\end{figure}

In Figure \ref{fig:ex1_transient1}, we observe that the transient boundary probability at state $(0,2)$ stabilises rapidly, with very little change for $t \gtrsim 5$, indicating that the process is close to its stationary regime. The signs of the sensitivities are consistent with the stationary case shown in Figure \ref{fig:ex1_stationary1}: the derivative with respect to $a$ is positive, while the derivative with respect to $b$ is negative.
The effect of changes in $a$ is most pronounced for intermediate values of $b$ and during the transient phase, just before convergence to stationarity. This reflects the fact that parameter changes have the strongest impact while the distribution is still evolving, whereas their influence diminishes once the process has effectively reached equilibrium.

In Figures \ref{fig:ex1_transientNEW} and \ref{fig:ex1_transientNEW2}, the left graphs show that the transient density is higher for smaller values of $x$ and is already stable in time for the chosen parameters $a=1$ and $b=0.5$.\\
The derivative with respect to $a$ (middle graphs) is positive for $x$ below the initial $z=1$ and negative for $x$ above the initial $z=1$ and exhibits a monotone behaviour in both $x$ and $t$, with its magnitude being rather small. The derivative with respect to $b$ (right panels) is positive, decreases in time for most values of $x$, and displays rather low dependence on $x$, reflecting a uniform effect across different levels of $x$.

\subsection{Hydro-power generation system example}
\label{ex:appl}
Consider a SFM $\{(X(t),\varphi(t)):t\geq 0\}$ constructed by Bean, O'Reilly and Sargison in~\cite{hydropaper} for the management of hydro-power generation systems, with $X(t)\in[0,1]$ modelling the deterioration level where $0=$ brand new and $1=$ in need of replacement, and $\varphi(t)\in\mathcal{S}=\{1,2,3,4,5,6\}=\mathcal{S}_+\cup\mathcal{S}_-$ modelling the operational phases, where $\mathcal{S}_+=\{1,2,3,4,5\}$ contains on-design (energy is produced at a standard rate), off-design (energy is produced at a non-standard rate which causes the system to deteriorate more quickly), start (to commence the production of the energy), stop (to stop the system), and idle phases, respectively, and $\mathcal{S}_-=\{6\}$ contains the maintenance phase.

Further, generator ${\bf T}$ is used to model an operational strategy, so that different ${\bf T}$ would represent alternative strategies under consideration, with one of the strategies in~\cite{hydropaper},
 obtained from the operational data, corresponding to
\begin{equation}
\label{e-T}
{\bf T}=
\left[
\begin{array}{cccccc}\setlength{\arraycolsep}{0.1mm}
 -83.9&      40.7&          0&     43.2&          0&          0\\
     180.2&   -262.9&          0&     82.7&          0&          0\\
     1085.5&    314.9&    -1440.0&     39.6&          0&          0\\
          0&          0&    17.6&    -1440.0&     1422.4&          0\\
          0&         0&     62.3&          0&    -64.8&     2.5\\
          0&          0&          0&          0&     39.4&    -39.4
\end{array}
\right].
\end{equation}
We note that the transition rates recorded by the generator ${\bf T}$ correspond to transitions observed while the level variable is within the open interval $X(t)\in(0,1)$ (and so additional parameters, discussed below, are used to model the behaviour at the boundaries $0$ and $1$).

Finally, the rates $c_i$, $i\in\mathcal{S}$, in~\cite{hydropaper} are estimated to be
\begin{equation*}
[c_1,c_2,\ldots,c_6]=[0.004,0.017,0.020,0.020,0.001,-0.01]
\end{equation*}
based on expert, experience-based opinion provided by a hydro-power company for a typical power station, and the expectation that the deterioration level would increase at a rate of 0.1\% per month if left idle, and at a rate of 0.4\% per month if left in the on-design phase, and so on, and that the deterioration level would decrease at a rate of 1\% per month during the maintenance phase.

Denote parameters $\theta_i=|c_i|$ for $i=1,\ldots,6$, collected in vector $\btheta=[\theta_1,\theta_2,\ldots,\theta_6]$, here
\begin{eqnarray}\label{eq:ex2par}
\btheta=
[0.004,0.017,0.020,0.020,0.001,0.01].
\end{eqnarray}

By~\cite{hydropaper}, the Laplace transform of the distribution of the lifetime $\gamma$ of the system defined by
\begin{equation*}
[{\bf L}(s;\btheta)]_{i}=E[e^{-s{\gamma}}; \gamma<\infty \mid X(0)=0,\varphi (0)=i]
=
\int_{t=0}^{\infty}
e^{-st}
[{\bf h}(t;\btheta)]_i
dt,
\end{equation*}
is given by
\begin{eqnarray}\label{eq_L}
{\bf L}(s;\btheta)= {\bf W}(s;\btheta)
\bar{\bf P}_{++}(s) {\bf 1},
\end{eqnarray}
where
\begin{eqnarray*}
{\bf W}(s;\btheta)&=&({\bf I}-{\bf G}_{+-}^{(0,1)}(s;\btheta)\bar{\bf P}_{-+})^{-1}{\bf H}_{++}^{(0,1)}(s;\btheta)
\end{eqnarray*}
is the Laplace transform of the time taken to first reach level $1$, given start from level~$0$ in some phase in $\mathcal{S}_+$, with the probability matrix
$$\check{\bf P}_{-+}=\left[\begin{array}{ccccc}0&0&0&0&1\\\end{array}\right]$$
corresponding to the transition from maintenance to idle phase upon hitting level $0$; and $\bar{\bf P}_{++}(s)$ is the Laplace transform of the time spent at the upper boundary given by
\begin{eqnarray*}
\bar{\bf P}_{++}(s)&=&\hat{\bf P}_{++}+\hat{\bf P}_{+0}(s{\bf I}-\hat{{\bf T}}_{00})^{-1}\hat{\bf T}_{0+},
\end{eqnarray*}
with
\begin{displaymath}
\hat{\bf P}_{++}=\left[
\begin{array}{ccccc}
0&0&0&0&1\\
0&0&0&0&1\\
0&0&0&0&1\\
0&0&0&0&1\\
0&0&0&0&0\\
\end{array}\right],
\mbox{ }
\hat{\bf P}_{+0}=\left[
\begin{array}{ccccc}
0\\
0\\
0\\
0\\
1\\
\end{array}\right],
\end{displaymath}
and $\hat{\bf T}_{0-}=[0]$, $[\hat{\bf T}_{00}]=[-64.8]$ and $\hat{\bf T}_{0+}=[0,0,0,0,64.8]$.

Denote by
\begin{eqnarray*}
\balpha=[\alpha_i]_{i=1,\ldots,5},
\alpha_i=\mathbb{P}(\varphi(0)=i),i=1,\ldots,5,
\end{eqnarray*}
the distribution of the initial phase, and let $h(t;\btheta)=\balpha {\bf h}(t;\btheta)$ be the density corresponding to
\begin{equation*}
\balpha{\bf L}(s;\btheta)=E_{\balpha}[e^{-s{\gamma}}; \gamma<\infty \mid X(0)=0]
=
\int_{t=0}^{\infty}
e^{-st}
\balpha
{\bf h}(t;\btheta)
dt
=
\int_{t=0}^{\infty}
e^{-st}
h(t;\btheta)
dt,
\end{equation*}
interpreted as the density of the lifetime.

By above and the matrix derivative calculus~\eqref{eq:mproduct}-\eqref{eq:minverse} , it then follows that
\begin{eqnarray*}
\frac{\partial}{\partial \btheta}
\left(
\balpha {\bf L}(s;\btheta)
\right)&=&
\balpha \frac{\partial}{\partial \btheta}
{\bf L}(s;\btheta)
,\\
\frac{\partial}{\partial \btheta}
{\bf L}(s;\btheta)&=&
\frac{\partial {\bf W}(s;\btheta)}{\partial \btheta}
\times
\left(
{\bf I}\otimes
\bar{\bf P}_{++}(s) {\bf 1}
\right)
\end{eqnarray*}
where
\begin{eqnarray*}
\frac{\partial {\bf W}(s;\btheta)}{\partial \btheta}
&=&
\frac{\partial ({\bf I}-{\bf G}_{+-}^{(0,1)}(s;\btheta)\bar{\bf P}_{-+})^{-1}}{\partial \btheta}
\times
\left(
{\bf I}
\otimes
{\bf H}_{++}^{(0,1)}(s;\btheta)
\right)
\\
&&
+
({\bf I}-{\bf G}_{+-}^{(0,1)}(s;\btheta)\bar{\bf P}_{-+})^{-1}
\times
\frac{\partial {\bf H}_{++}^{(0,1)}(s;\btheta)}{\btheta}
,\\
\frac{\partial ({\bf I}-{\bf G}_{+-}^{(0,1)}(s;\btheta)\bar{\bf P}_{-+})^{-1}}{\partial \btheta}
&=&
({\bf I}-{\bf G}_{+-}^{(0,1)}(s;\btheta)
\bar{\bf P}_{-+})^{-1}
\times
\frac{\partial {\bf G}_{+-}^{(0,1)}(s;\btheta)}{ \partial \btheta}
\times\left(
{\bf I}\otimes \bar{\bf P}_{-+}
\right)
\\
&&
\times
\left(
{\bf I}
\otimes ({\bf I}-{\bf G}_{+-}^{(0,1)}(s;\btheta)\bar{\bf P}_{-+})^{-1}
\right)
,
\end{eqnarray*}
and the remaining quantities can be evaluated using Theorems~\ref{th:PsiDer}-\ref{th:KsiDer} and Lemma~\ref{lem:GHder}, where we apply
\begin{eqnarray*}
\frac{\partial}{\partial\btheta}{\bf Q}(s;\btheta)&=&
\left[
\begin{array}{cc}
\frac{\partial}{\partial\btheta}{\bf Q}_{++}(s;\btheta)&\frac{\partial}{\partial\btheta}{\bf Q}_{+-}(s;\btheta)\\
\frac{\partial}{\partial\btheta}{\bf Q}_{++}(s;\btheta)&\frac{\partial}{\partial\btheta}{\bf Q}_{+-}(s;\btheta)
\end{array}
\right]
,
\\
\frac{\partial}{\partial\btheta}{\bf Q}_{++}(s;\btheta)
&=&
\frac{\partial ({\bf C}_+(s;\btheta))^{-1}}{\partial\btheta}
\times
\left(
{\bf I}\otimes
\left(
{\bf T}_{++}(\btheta)-s{\bf I}
\right)
\right)
,\\
\frac{\partial}{\partial\btheta}{\bf Q}_{+-}(s;\btheta)
&=&
\frac{\partial ({\bf C}_+(s;\btheta))^{-1}}{\partial\btheta}
\times
\left(
{\bf I}\otimes
{\bf T}_{+-}(\btheta)
\right)
,\\
\frac{\partial}{\partial\btheta}{\bf Q}_{--}(s;\btheta)
&=&
\frac{\partial |{\bf C}_-(s;\btheta)|^{-1}}{\partial\btheta}
\times
\left(
{\bf I}\otimes
\left(
{\bf T}_{--}(\btheta)-s{\bf I}
\right)
\right)
,\\
\frac{\partial}{\partial\btheta}{\bf Q}_{-+}(s;\btheta)
&=&
\frac{\partial |{\bf C}_-(s;\btheta)|^{-1}}{\partial\btheta}
\times
\left(
{\bf I}\otimes
{\bf T}_{-+}(\btheta)
\right)
,\\
\frac{\partial ({\bf C}_+(s;\btheta))^{-1}}{\partial\btheta}
&=&
-({\bf C}_+(s;\btheta))^{-1}\times
\frac{\partial {\bf C}_+(s;\btheta)}{\partial\btheta}
\times
({\bf I}\otimes
({\bf C}_+(s;\btheta))^{-1} )
,\\
\frac{\partial |{\bf C}_-(s;\btheta)|^{-1}}{\partial\btheta}
&=&
-|{\bf C}_-(s;\btheta)|^{-1}\times
\frac{\partial |{\bf C}_-(s;\btheta)|}{\partial\btheta}
\times
({\bf I}\otimes
|{\bf C}_-(s;\btheta)|^{-1} )
,
\end{eqnarray*}
and note that here,
for $i=1,\ldots,5$,
\begin{eqnarray*}
\frac{\partial ({\bf C}_+(s;\btheta))^{-1}}{\partial\theta_i}
&=&
-({\bf C}_+(s;\btheta))^{-1}
\times
\mbox{diag}({\bf e}_i)
\times
({\bf C}_+(s;\btheta))^{-1}
=
-\mbox{diag}({\bf e}_i)
c_i^{-2},
\end{eqnarray*}
and, since $\theta_6=|c_6|$,
\begin{eqnarray*}
\frac{\partial |{\bf C}_-(s;\btheta)|^{-1}}{\partial\theta_6}
&=&
-|{\bf C}_-(s;\btheta)|^{-1}\times
\mbox{diag}({\bf e}_6)
\times
|{\bf C}_-(s;\btheta)|^{-1}
=
-\mbox{diag}({\bf e}_6)|c_6|^{-2}
=
-|c_6|^{-2}\
.
\end{eqnarray*}


Since
\begin{eqnarray*}
\balpha
\frac{\partial}{\partial \btheta}
{\bf L}(s;\btheta)
&=&
\frac{\partial}{\partial \btheta}
\int_{t=0}^{\infty}
e^{-st}
\balpha
{\bf h}(t;\btheta)
dt
=
\int_{t=0}^{\infty}
e^{-st}
\frac{\partial}{\partial \btheta}
\balpha
{\bf h}(t;\btheta)
dt
=
\int_{t=0}^{\infty}
e^{-st}
\frac{\partial}{\partial \btheta}
h(t;\btheta)
dt
,
\end{eqnarray*}
by numerically inverting $\balpha \frac{\partial}{\partial \btheta}
{\bf L}(s;\btheta)$ using the algorithm by Horv{\'a}th et al.~\cite{horvath2020numerical}, we then obtain the derivative of the density of the lifetime, $\frac{\partial}{\partial \btheta}h(t;\btheta)$. The output is presented in Figure~\ref{fig:ex2_lifetime}.

In Figure \ref{fig:ex2_lifetime}, on the first plot, we present the distribution of the first hitting time of level $X(t)=1$ for the SFM used in the management of a hydro-power generation system. The distribution is unimodal and symmetric, with median around $t=244$.
The effect on the lifetime is similar across all parameters associated with the increasing states $i=1,\ldots,5$. In particular, the derivatives are approximately linear and decreasing in $t$, with comparable magnitudes. This is also reflected in the semi-relative sensitivities, $\theta_i\times\partial h(t,\boldsymbol{\theta})/\partial \theta_i$, which are of similar size across $i$ and therefore indicate a comparable impact of each parameter on the lifetime. At the median $t=244$, the derivatives vanish, indicating no local sensitivity to parameter changes. Away from the median, the effect increases in magnitude: it is positive for $t<244$ and negative for $t>244$, reflecting a redistribution of probability mass from one side of the distribution to the other.
The maintenance parameter has the opposite effect on the lifetime, which is consistent with its role in the model.

\begin{figure}[H]
\centering
\includegraphics[scale=0.32]{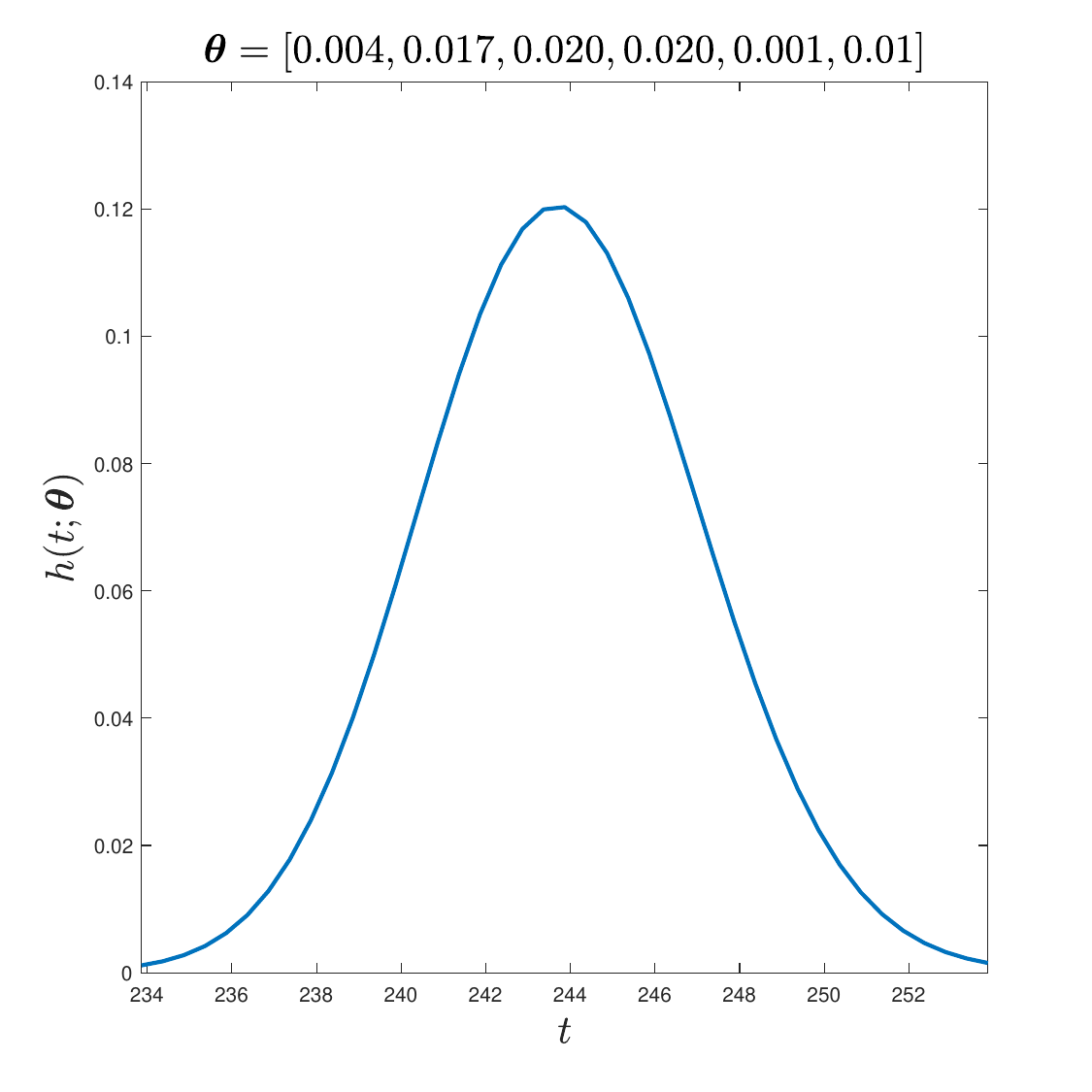}
\includegraphics[scale=0.32]{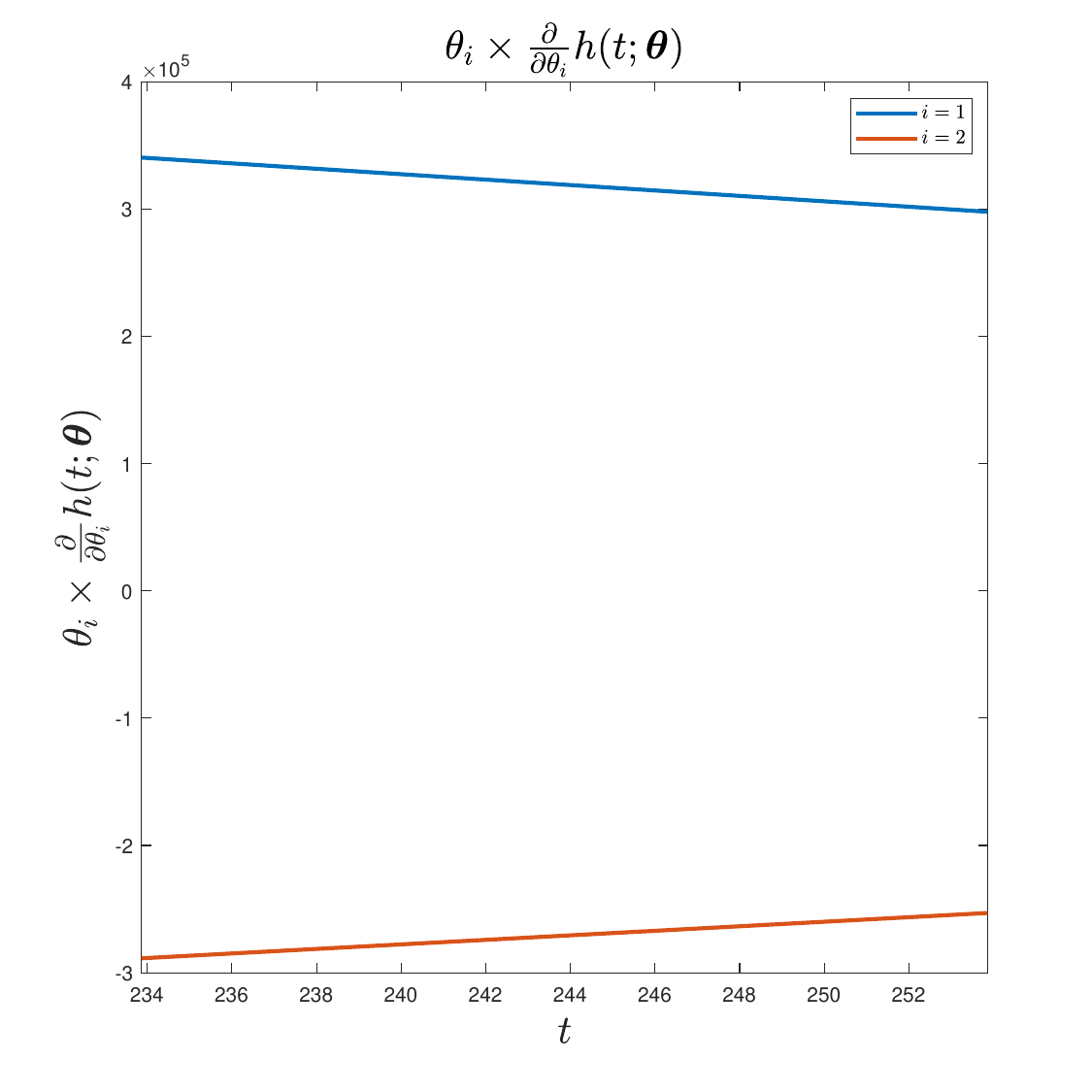}
\includegraphics[scale=0.32]{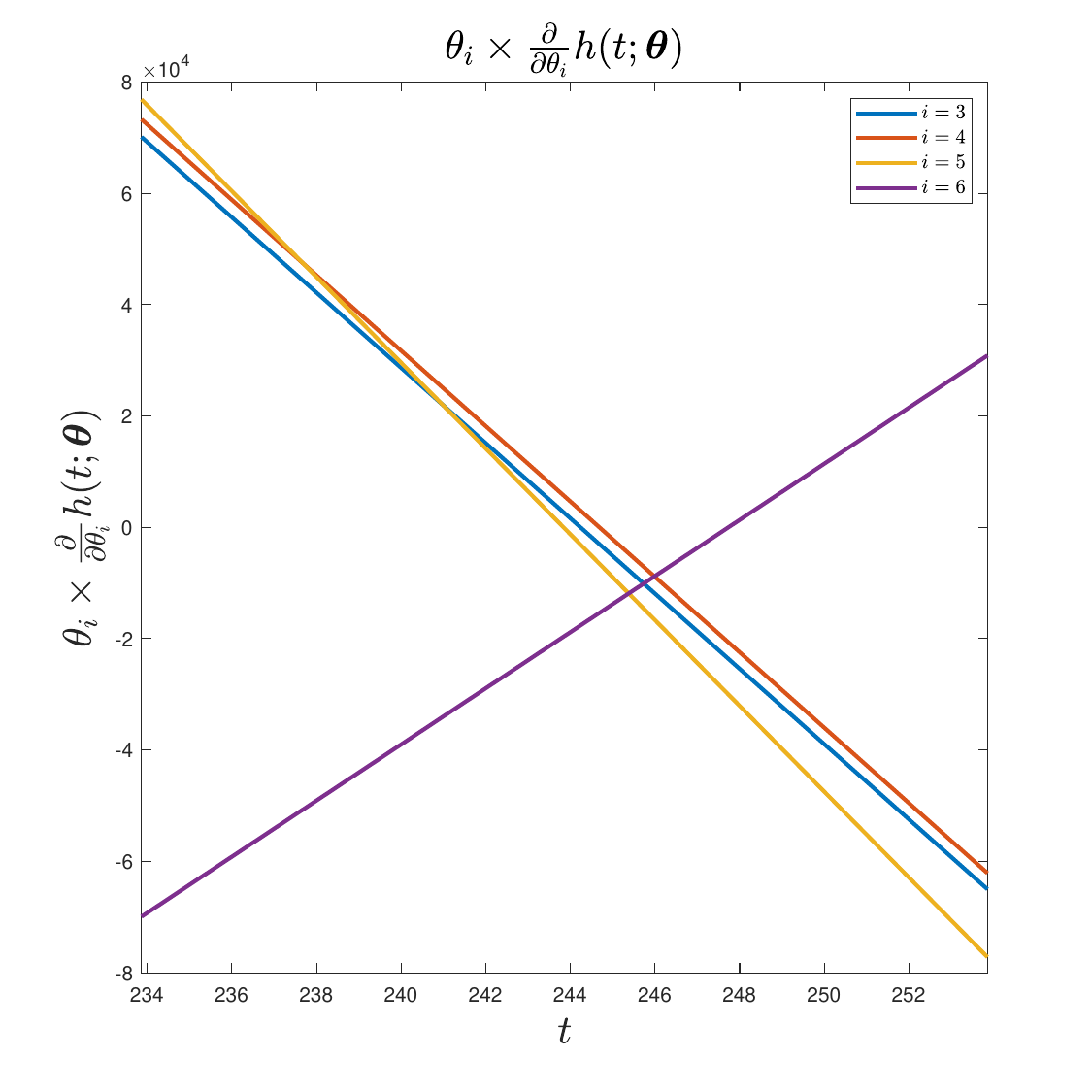}
\caption{Density $h(t,\btheta)$ of the lifetime and its derivatives in Example~\ref{ex:appl}: (top row) $h(t,\btheta)$ evaluated using parameters $\btheta$ in~\eqref{eq:ex2par}; and $\theta_i\times\frac{\partial}{\partial \theta_i}h(t;\btheta)$ for $i=1,\ldots,6$ assuming the process starts from the idle phase $\varphi(0)=5$ at time zero, according to $\balpha=[0,0,0,0,1]$.
}
\label{fig:ex2_lifetime}
\end{figure}


\subsection{Insurer ruin example}
\label{ex:insurance}
Gerber in \cite{gerber1998on} considered the classical insurance risk process
\begin{equation*}\label{Eq.Classical risk model}
  R_t = x+ ct - \sum_{k=1}^{N_t}U_k,
\end{equation*}
where $x\geq 0 $ is the initial surplus, $c$ is the premium rate, $N_t$, $t \geq 0$, is a Poisson arrival process, and $\{U_k\}_{k=1}^\infty$ is a sequence of i.i.d.\ claim sizes independent of $N_t$.
This process is a key process in a actuarial theory.
For this process, we usually try to find or estimate the ruin probability
\begin{equation*}\label{Eq.Gerber-Shiu definition}
\psi(x)= \mathbb{P} \left( \tau <\infty\right),
\end{equation*}
  where
\begin{equation*}\label{Eq.Ruin time}
\tau = \inf\{t\geq 0 : R_t <0\}
\end{equation*}
 is the ruin time for a more general process of the following form
\begin{equation*}\label{Eq.Levy process with positive phase-type jumps mixture model}
 R_t = x+ ct +  \sum_{k=1}^{N^{(+)}_t}U_k^{(+)} - \sum_{k=1}^{N^{(-)}_t}U_k^{(-)}
\end{equation*}
The sequence $\{U_k^{(+)}\}$ models the sudden gains of an insurance company and it is a sequence of i.i.d.\ random variables which is independent of the Poisson process $N_t^{(+)}$ with intensity $\lambda^{(+)}$.
As before, $\{U_k^{(-)}\}_{k=1}^\infty$ is a sequence of i.i.d.\ claim sizes independent of $N_t^{(-)}$ which is in our case the Poisson process $N_t^{(-)}$ with intensity $\lambda^{(-)}$.
Moreover, the claims and gains are independent of each other. Similarly, the gain arrival process and the claim arrival process do not depend on each other either.

In our model, we assume that $U_k^{(+)}$ and $U_k^{(-)}$
have phase-type distributions with parameters $(\boldsymbol{\alpha}^{(+)}, \mathbf{M}^{(+)})$ and $(\boldsymbol{\alpha}^{(-)}, \mathbf{M}^{(-)})$, respectively.

Phase-type distributions are a computational vehicle for much of modern applied probability.
Recall that a distribution $F$ is said to be of phase-type with parameters $(\boldsymbol{\alpha}, \mathbf{M})$ if it is the same distribution
as the lifetime of some terminating Markov process starting from the vector $\boldsymbol{\alpha}=[\alpha_k]$, having finitely many states, say $N$, and having time homogeneous transition rates with subintensity $\mathbf{M}=[M_{ij}]$.
Namely,
$F(y)=1-\boldsymbol{\alpha}\exp\{\mathbf{M}y\} \mathbf{1}$,
for $y\geq 0$, where $\mathbf{1}$ is a column\-/vector of ones.
This gives also the form of the density $f(y)$ of $F$ as
$f(y)=\boldsymbol{\alpha}\exp\{\mathbf{M}y\}\mathbf{t}$,
where
$\mathbf{t}=[t_k]=-\mathbf{M} \mathbf{1}$.
Many well\-/known distributions are of phase\-/type; for example, the exponential distribution with intensity $\rho$ is PH in this case with $N=1$, $\mathbf{M}=\rho$, and $\boldsymbol{\alpha}=1$. Other more involved distributions that belong to the class of phase\-/types are the hyperexponential, the Erlang, and the Coxian; see \cite{asmussen-RP} for a detailed description.

When a model the positive and negative jumps of phase\-/type, a common approach is to spread them out as a succession of linear pieces of unit slope, the positive jumps with slope $+1$
and negative jumps with slope $-1$; see e.g.\ in \cite{badescu2005surplus,breuer2011generalised,Ramaswami2006}.
This procedure, which is called fluid embedding, requires adding supplementary states to the background process as many as cumulative amount of the phases appearing in our model
plus one which related to the premium state. The 1st state corresponds to the premium only. The other states correspond to the upward and downward jumps transformed into lines.
In other words, by above procedure, we construct an auxiliary SFM with the state space $\{1,2,\dots,m=N^{(+)}+N^{(-)}+1~\}$.
In the case when $U_k^{(+)}=0$ with superscripts $(-)$ skipped for convenience, we have $m=N+1$,
$c_1=c$ and $c_2=c_3=\ldots =c_m=-1$ and generator ${\bf T}=[T_{ij}]$ such that
\begin{equation*}
T_{11}=-\lambda,\quad
T_{1 (k+1)}=\lambda  {\alpha}_k,\quad
T_{(i+1) (j+1)}= M_{ij}, \quad
T_{(k+1)1}= t_k\quad \text{for $k,i,j=1,2,\ldots, N$.}
\end{equation*}

The key observation is that
\begin{equation*}
\psi(x)=\sum_{j=1}^m
[{\bf G}^{(x)}(0;\btheta)]_{1j}
\end{equation*}
since SFM ever touches zero iff the ruin happens for the risk process $R_t$.

For numerical purposes we decided to consider
claim size distribution being
the symmetric mixture of Erlang$(2,\theta_1)$ and Erlang$(2,\theta_2)$ taken from Example 4.2 of \cite{DICKSON1998251}.
More formally, in our case
$\lambda=1$, $c=4$, $N=4$ and $\mathbf{\alpha}=[1/2,0, 1/2, 0]$ and
\[\mathbf{M} =
\left[\begin{array}{cccc}
-\theta_1& \theta_1& 0& 0\\
0 &-\theta_1& 0& 0\\
0 &0 &-\theta_2& \theta_2 \\
0 &0 &0 &-\theta_2
\end{array}
\right].
\]
In \cite{DICKSON1998251}, $\theta_1=1$ and $\theta_2=2$.
Hence our Markov chain $\varphi(t)$ lives on $5$-dimensional state space partitioned as $\mathcal{S}=\mathcal{S}_+\cup\mathcal{S}_-=\{1\}\cup\{2,3,4,5\}$ and generator
\begin{eqnarray*}
{\bf T}
&=&
\left[
\begin{array}{c|cccc}
-1&1/2 &0 &1/2 &0 \\
\hline
0& -\theta_1 &\theta_1 &0 &0 \\
\theta_1& 0& -\theta_1 &0 &0\\
0& 0& 0& -\theta_2 &\theta_2\\
\theta_2& 0& 0& 0& -\theta_2
\end{array}
\right]
=
\left[
\begin{array}{cc}
{\bf T}_{++}&
{\bf T}_{+-}\\
{\bf T}_{-+}&
{\bf T}_{--}
\end{array}
\right],
\end{eqnarray*}
with
\begin{eqnarray*}
\frac{\partial}{\partial \theta_1}
{\bf T}
=
\left[
\begin{array}{c|cccc}
0&0 &0 &0 &0 \\
\hline
0& -1 &1 &0 &0 \\
1& 0& -1 &0 &0\\
0& 0& 0& 0 &0\\
0& 0& 0& 0& 0
\end{array}
\right]
,
\
\frac{\partial}{\partial \theta_2}
{\bf T}
=
\left[
\begin{array}{c|cccc}
0&0 &0 &0 &0 \\
\hline
0& 0 &0 &0 &0 \\
0& 0& 0 &0 &0\\
0& 0& 0& -1 &1\\
1& 0& 0& 0& -1
\end{array}
\right]
.
\end{eqnarray*}

Next, to evaluate $\frac{\partial}{\partial\btheta}\psi(x)$ we apply
\begin{eqnarray}
\frac{\partial}{\partial\btheta}\psi(x)
&=&
\sum_{j=1}^m
\left[
\frac{\partial}{\partial\btheta}
\left(
\bPsi(\btheta)e^{{\bf D}(\btheta)x}
\right)
\right]_{1j}
\label{eq:psix_der}
\end{eqnarray}
where, by~\eqref{eq:mproduct},
\begin{eqnarray}
\frac{\partial}{\partial\btheta}
\left(
\bPsi(\btheta)\times e^{{\bf D}(\btheta)x}
\right)
&=
\frac{\partial }{ \partial \btheta}
\bPsi(\btheta)
\times\left(
{\bf I}\otimes e^{{\bf D}(\btheta)x}
\right)
+
\bPsi(\btheta)
\times
\frac{\partial }{\partial \btheta}
e^{{\bf D}(\btheta)x}
,
\label{eq:mproduct_insure}
\end{eqnarray}
with $\frac{\partial }{ \partial \btheta}
\bPsi(\btheta)$ following by Theorem~\ref{th:PsiDer}. Further, by~\eqref{eq:exptrick},
\begin{eqnarray}
\int_{x=0}^{\infty}e^{-vx}
\frac{\partial
}
{ \partial \btheta}
e^{
{\bf D}\left(\btheta\right)x
}
dx
&=&
({\bf D}(\btheta)-v{\bf I})^{-1}
\times
\frac{\partial {\bf D}(\btheta)}{\partial \btheta}
\times
\left(
{\bf I}
\otimes ({\bf D}(\btheta)-v{\bf I})^{-1}
\right),
\label{eq:exptrickD}
\end{eqnarray}
with
\begin{eqnarray*}
\frac{\partial {\bf D}(\btheta)}{\partial \btheta}
&=&
\frac{\partial}{\partial \btheta}
{\bf Q}_{--}(\btheta)
+
\left(
\frac{\partial}{\partial \btheta}
{\bf Q}_{-+}(\btheta)
\right)
\times
\left(
{\bf I}
\otimes
\bPsi(\btheta)
\right)
+
{\bf Q}_{-+}(\btheta)
\times
\frac{\partial}{\partial \btheta}
\bPsi(\btheta)
\end{eqnarray*}
and
\begin{eqnarray*}
\frac{\partial}{\partial\theta_1}
{\bf Q}_{-+}(\btheta)
=
\left[
\begin{array}{c}
0\\1\\0\\0
\end{array}
\right]
,\
\frac{\partial}{\partial\theta_1}
{\bf Q}_{--}(\btheta)
=
\left[
\begin{array}{cccc}
-1&1&0&0\\
0&-1&0&0\\
0&0&0&0\\
0&0&0&0
\end{array}
\right]
,\\
\frac{\partial}{\partial\theta_2}
{\bf Q}_{-+}(\btheta)
=
\left[
\begin{array}{c}
0\\0\\0\\1\\
\end{array}
\right]
,\
\frac{\partial}{\partial\theta_2}
{\bf Q}_{--}(\btheta)
=
\left[
\begin{array}{cccc}
0&0&0&0\\
0&0&0&0\\
0&0&-1&1\\
0&0&0&-1\\
\end{array}
\right]
,
\end{eqnarray*}
and so $\frac{ \partial}{ \partial \btheta}
e^{
{\bf D}\left(\btheta\right)x
}$ follows by numerically inverting the Laplace transform in~\eqref{eq:exptrickD} using the algorithm by Horv{\'a}th et al.~\cite{horvath2020numerical}.

\begin{figure}[h]
\centering
\includegraphics[scale=0.30]{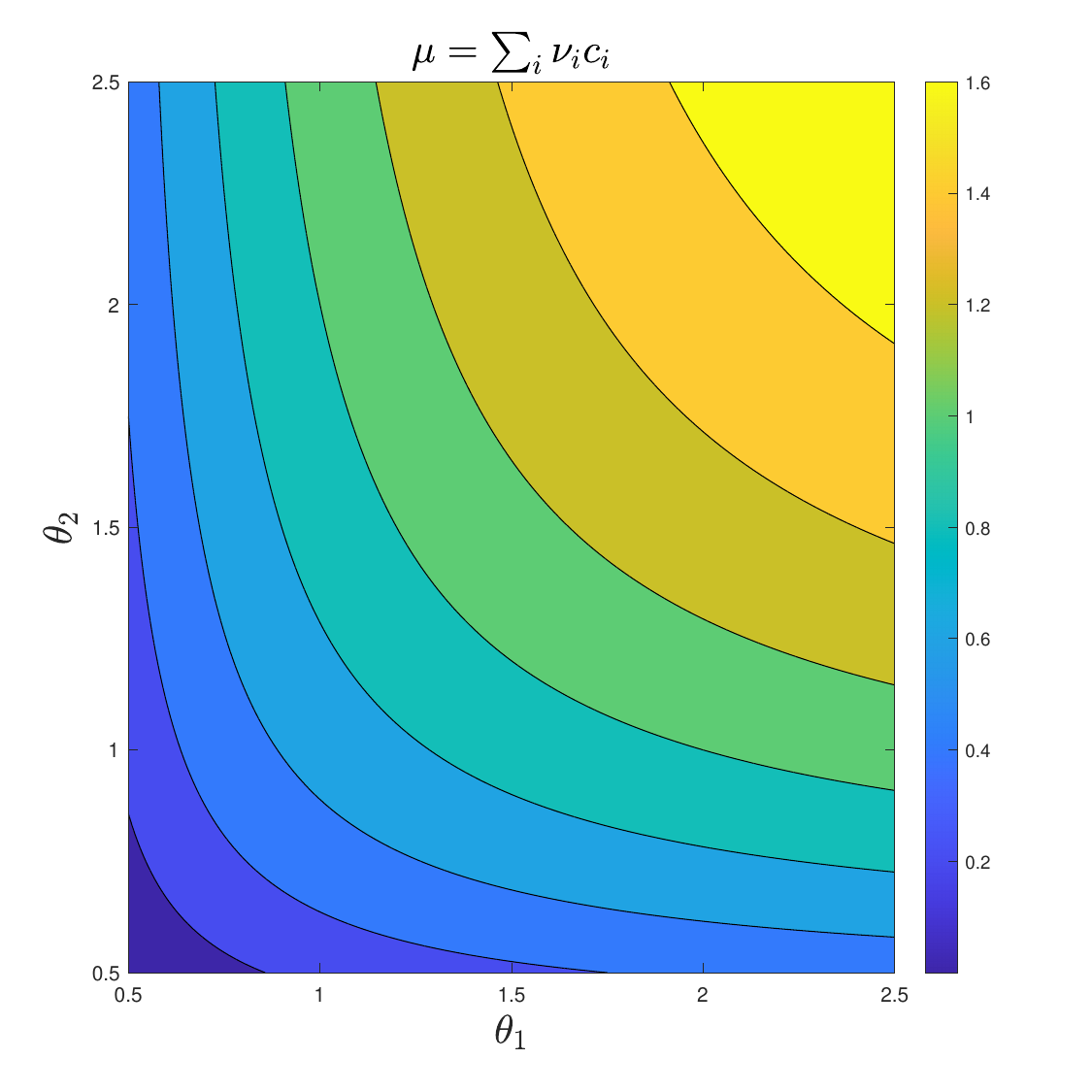}
\
\includegraphics[scale=0.30]{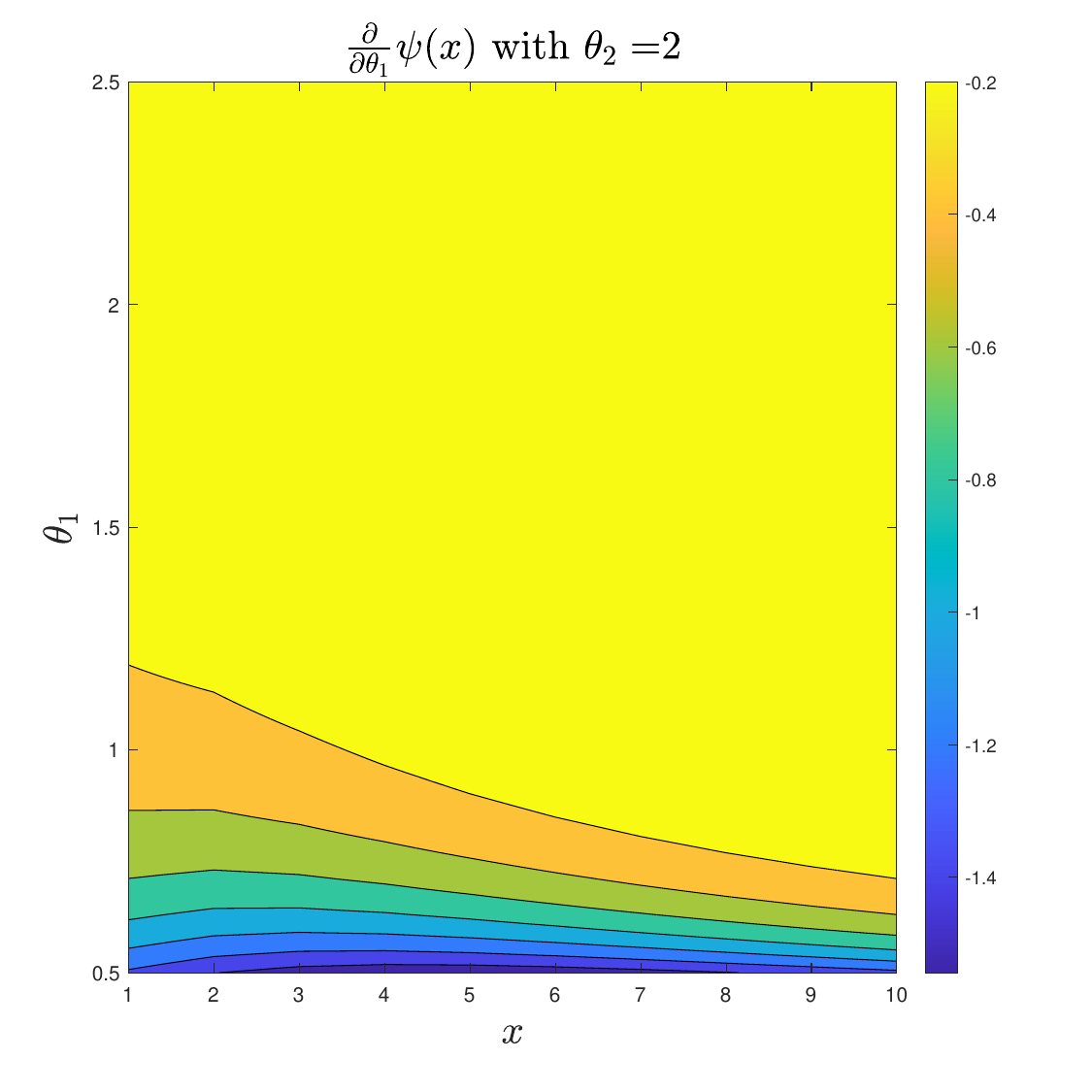}
\
\includegraphics[scale=0.30]{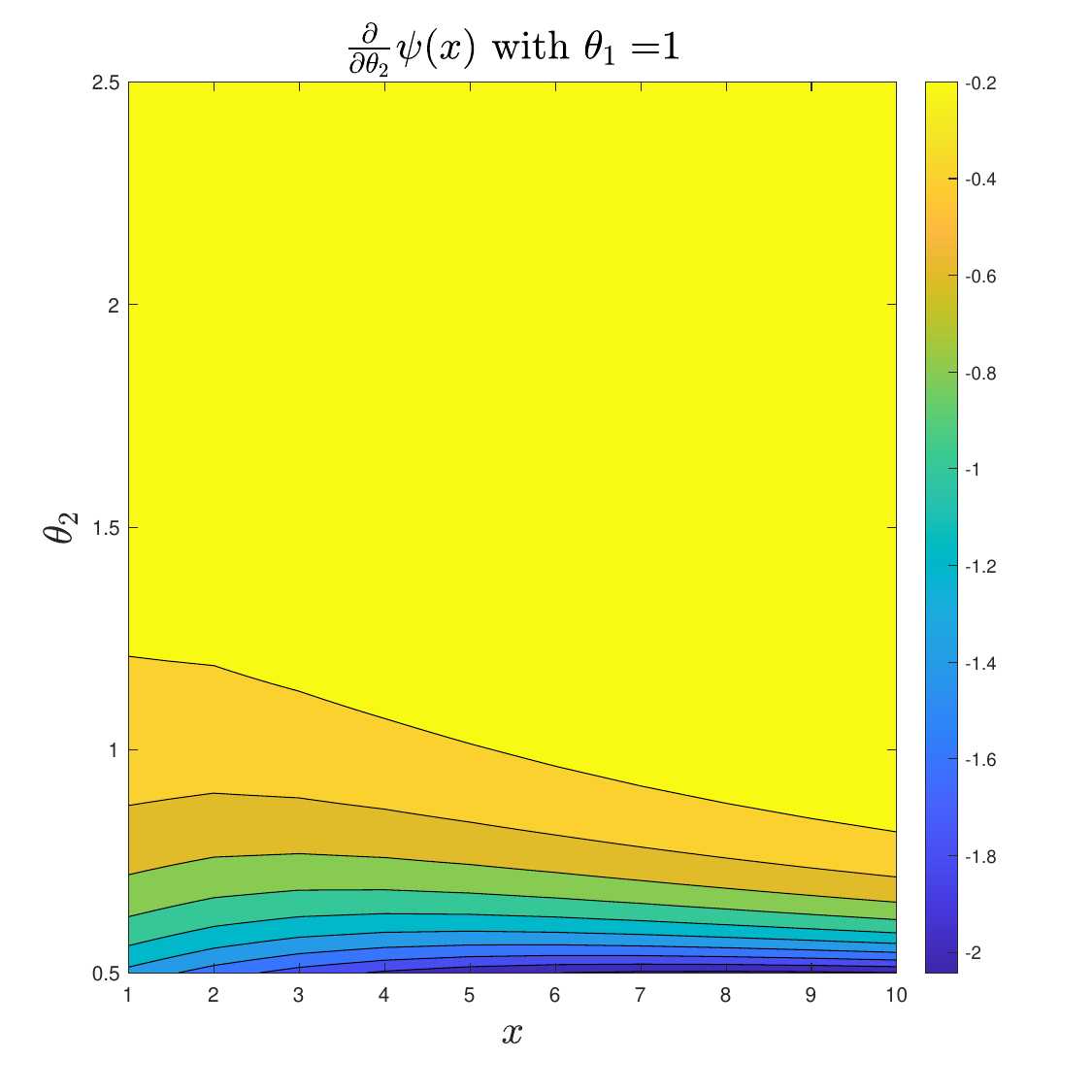}
\\
\includegraphics[scale=0.3]
{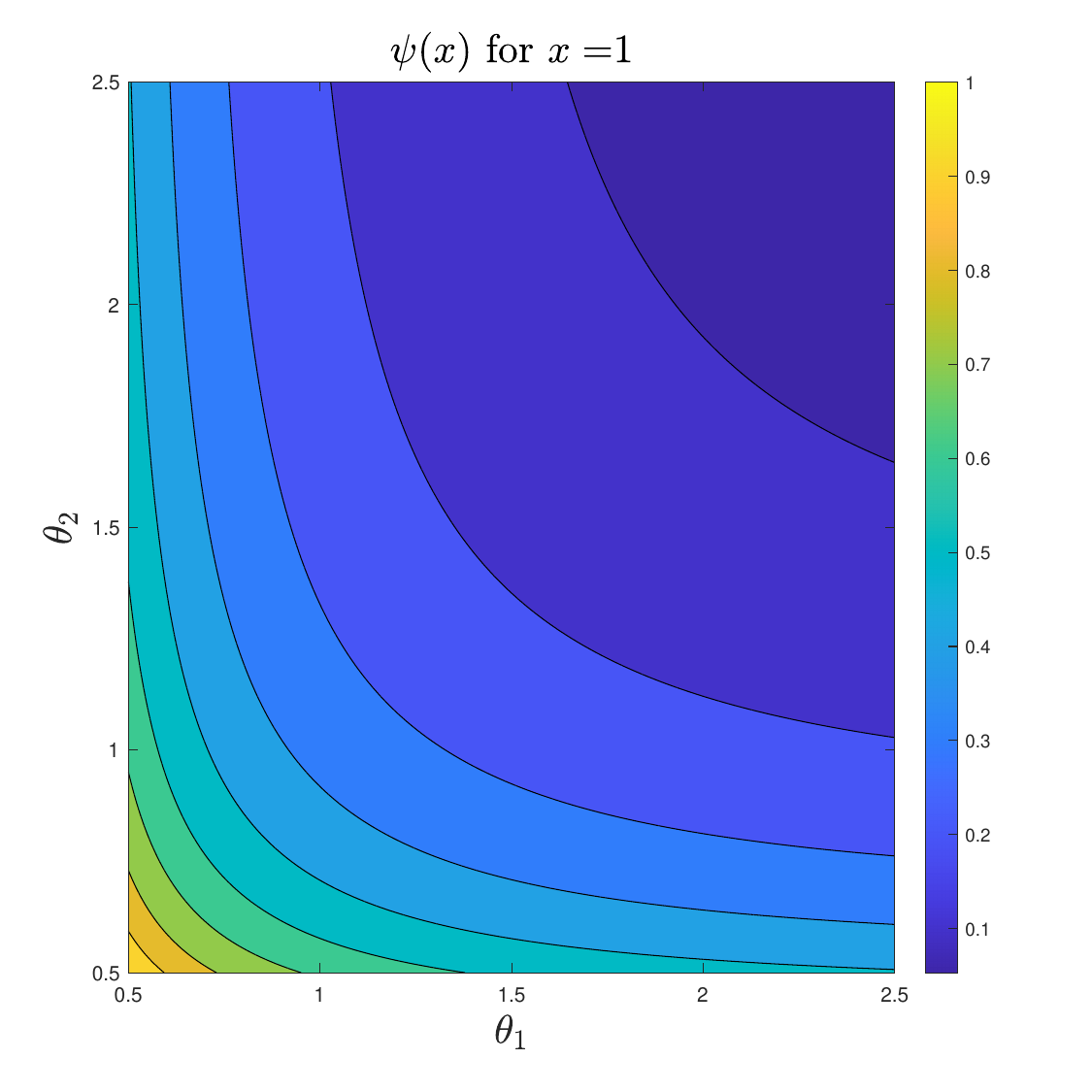}
\
\includegraphics[scale=0.3]
{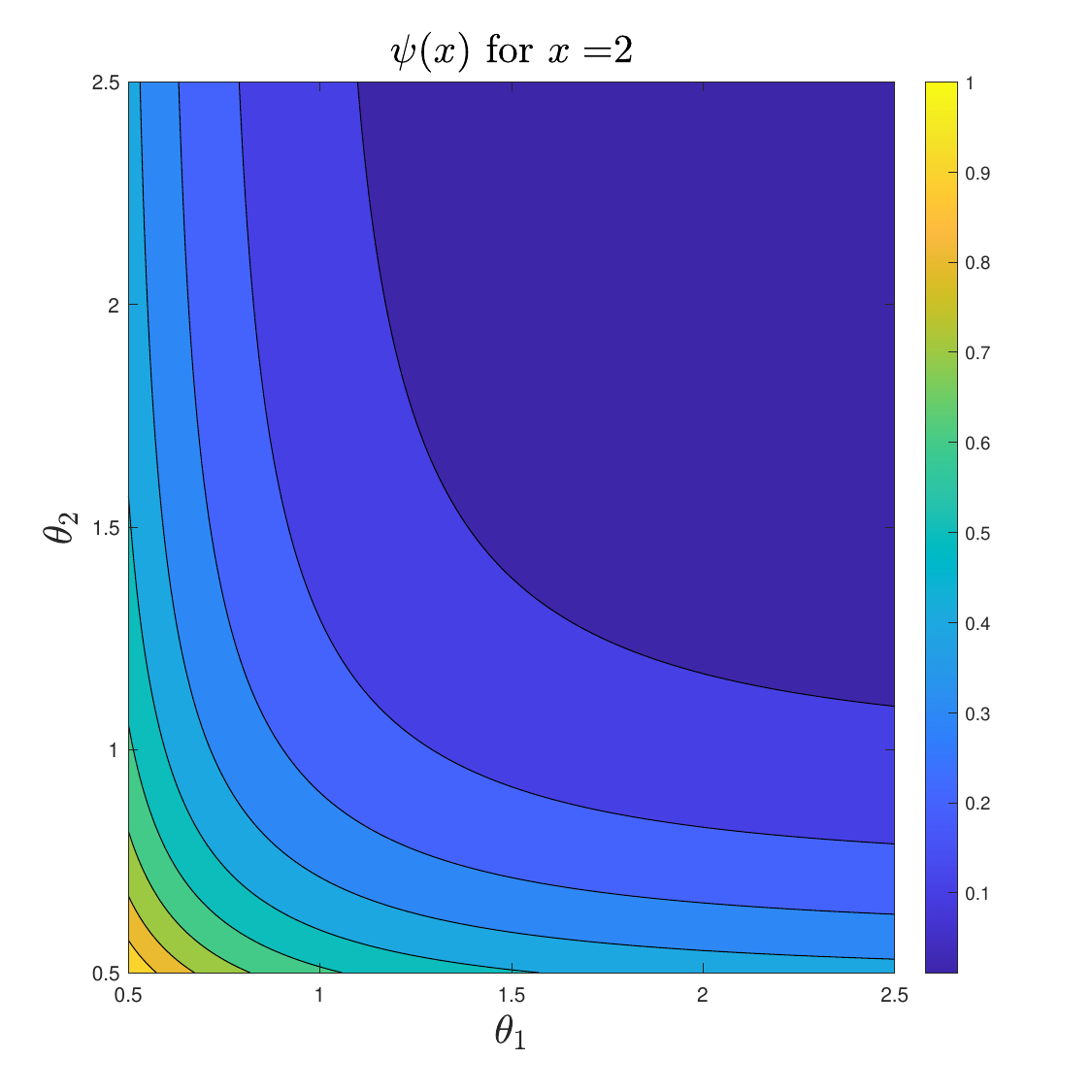}
\
\includegraphics[scale=0.3]
{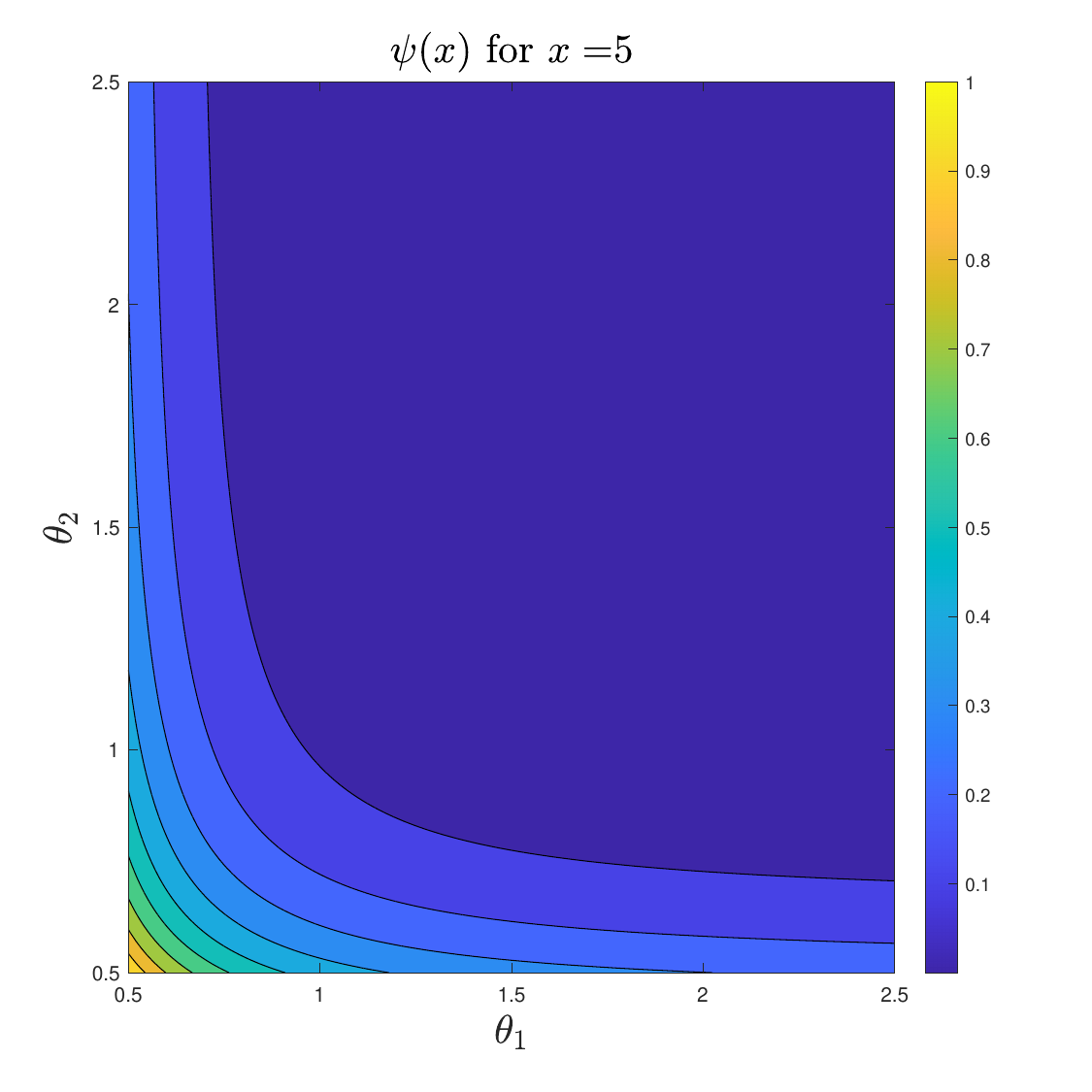}
\caption{Example~\ref{ex:insurance}: (top row) $\mu$, $\frac{\partial}{\partial\theta_1}\psi(x)$, and $\frac{\partial}{\partial\theta_2}\psi(x)$;  and (bottom row) $\psi(x)$.
}
\label{fig:ex3_insurance}
\end{figure}

The numerical results are presented in Figure~\ref{fig:ex3_insurance}. As expected, the drift $\mu$ (first plot in the top row) is positive, symmetric in $\theta_1$ and $\theta_2$, and increasing in each parameter. Indeed, a higher value of $\theta_i$ corresponds to a lower expected claim size, and therefore to a higher drift.
A similar mechanism is observed for the ruin probability $\psi(x)$, shown in the second row. It is decreasing in $x$ and symmetric in $\theta_1$ and $\theta_2$. Moreover, $\psi(x)$ decreases as either $\theta_i$ increases, since a higher $\theta_i$ again corresponds to a lower expected claim size and hence a lower probability of ruin.
Consequently, the derivatives $\partial \psi(x)/\partial \theta_i$ are negative, as illustrated in the second and third plots of the top row. Their magnitude is largest when both $\theta_1$ and $\theta_2$ are small, indicating that the sensitivity of the ruin probability to changes in $\theta_i$ is most pronounced in that region.



\section{Conclusion and Future Work}\label{sec:Conclusion}


We derived results for the sensitivity analysis of the stochastic fluid models for the first time. We illustrated the application of our methodology to stationary and transient (time-dependent) metrics of queueing systems through numerical examples. We constructed a simple example for which we derived explicit expressions. We also demonstrated application potential to deteriorating systems, and risk insurance processes.

The above analysis can be generalised to other classes of the SFMs, since the quantities considered here are the building blocks of the expressions within other SFMs. One relevant example is the class of SFMs referred to as the multi-layer SFMs (ML-SFMs), in which the fluid level is modulated by layer-dependent continuous-time Markov
chains, and which lends itself to many applications where the level variable may affect the transitions within the underlying environment or the rate of change of the level. For instance, in an overloaded telecommunication buffer, we may wish to change the operation of the on and off switches which control whether the data enters or leaves the buffer. In a queueing system with element of control, the volume of customers may affect the rate at which the system is emptying. In a deteriorating system, the level of deterioration may affect the deterioration rates, as discussed by Samuelson et al.~\cite{SAMUELSON20171169} in their analysis of a model related to our Example~\ref{ex:appl}.


Transient analysis of the general ML-SFMs was established by Bean and O'Reilly in~\cite{BOP2008}, where the process was first introduced in its full scale. Next, da Silva Soares and Latouche derived the results for the stationary analysis of ML-SFMs in~\cite{SL2009}. Further, Wu in~\cite{Wu2021}, and He and Wu in~\cite{HW2020,HW2019}, reviewed and refined the theory and algorithms for the analysis of ML-SFMs and developed ideas for the applications in queueing systems and risk models. For the comprehensive literature review of the ML-SFMs, the reader is referred to Wu~\cite{Wu2021}. Also, see Wu~\cite{Wu2021} for the details of the following three applications of the ML-SFMs approach to the analysis of queueing systems. The first is MAP/PH/K + GI queues, useful in the analysis of multi-server queueing systems with a moderately large number of servers. The second is the double-sided queues with MMAP and abandonment, useful in the analysis of ride-hailing platforms and organ transplantation systems. The third is the double-sided queues with BMAP and abandonment, useful in the analysis of perishable inventory systems and financial
markets.

Noting that the expressions for the stationary and transient analysis of these various classes of the ML-SFMs are written in terms of the key quantities for the standard SFMs, their sensitivity analysis can be derived as an extension of the results derived here, for a wide range of applications. These results will be reported in our future work.

\section{Statements and declarations}

\noindent{\bf Data}\\

No new data were created or analyzed during this study.\\ 

\noindent{\bf Declaration of competing interests}\\

The authors have no competing interests to declare that are relevant to the content of this article.\\


\bibliographystyle{abbrv}
\bibliography{refs_sens_sfms_2024_v000}

\end{document}